\theoremstyle{plain} 
\newtheorem{theorem}{Theorem}[section]
\newtheorem*{theorem*}{Theorem}
\newtheorem*{definition*}{Definition} 
\newtheorem{corollary}[theorem]{Corollary}
\newtheorem{remark}[theorem]{Remark}
\newtheorem{example}[theorem]{Example}
\newtheorem{conjecture}[theorem]{Conjecture}
\newtheorem{lemma}[theorem]{Lemma}
\newtheorem{proposition}[theorem]{Proposition}
\newtheorem{question}[theorem]{Question}
\numberwithin{equation}{section}
\numberwithin{figure}{section}
\newcommand{\E}{{\mathbb E}}
\newcommand{\R}{{\mathbb R}}
\newcommand{\Gnp}{G_{n,p}}
\newcommand{\Gp}{G_p}
\newcommand{\ds}{{\rm vol}}
\newcommand{\vol}{{\rm vol}}
\newcommand{\Bin}{{\rm Bin}}
\newcommand{\eps}{\varepsilon}
\renewcommand{\AA}{ \mathcal{A} }
\newcommand{\A}{A}
\newcommand{\cB}{\mathcal{B}}
\newcommand{\cQ}{\mathcal{Q}}
\newcommand{\q}{q^*}
\newcommand{\qk}{q^*_{\leq k}}
\newcommand{\qtwo}{q^*_{\leq 2}}
\newcommand{\inter}{{\rm int}}
\newcommand{\xvec}{\boldsymbol{x}}
\newcommand{\yvec}{\boldsymbol{y}}
\newcommand{\Abs}[1]{\bigl|#1\bigr|}
\numberwithin{equation}{section}
\newcommand{\NN}{{\mathbb N}}
\newcommand{\pr}{{\mathbb P}}
\renewcommand{\E}{{\mathbb E}}
\newcommand{\var}{{\rm var}}
\renewcommand{\Bin}{{\rm Bin}}
\renewcommand{\ds}{{\rm vol}}
\newcommand{\cA}{{\mathcal A}}
\newcommand{\cE}{{\mathcal E}}
\newcommand{\TT}{\mathcal{T}}
\renewcommand{\eps}{\varepsilon}
\newcommand{\eint}{e_{\cA}^{\rm int}}
\newcommand{\indic}{{\mathbf 1}}
\date{ \vspace{-10mm}}
\title{Modularity and partially observed graphs.}
\author[1]{Colin McDiarmid\thanks{Email: \textit{cmcd@stats.ox.ac.uk}.}}
\author[2]{Fiona Skerman\thanks{Email: \textit{fiona.skerman@math.uu.se}. Partially supported by the Wallenberg AI, Autonomous Systems and Software Program (WASP) and the project AI4Research at Uppsala University. Part of this work was done while visiting the Simons Institute for the Theory of Computing, supported by a Simons-Berkeley Research Fellowship.}}
\affil[1]{Department of Statistics, University of Oxford.}
\affil[2]{Department of Mathematics, Uppsala University.}
\begin{document}

\maketitle

\begin{abstract}
Suppose that there is an unknown underlying graph $G$ on a large vertex set, and we can test only a proportion of the possible edges to check whether they are present in $G$.  If $G$ has high modularity, is the observed graph $G'$ likely to have high modularity?  
We see that this is indeed the case under a mild condition, in a natural model where we test edges at random.  We find that  $\q(G') \geq \q(G)-\eps$ with probability at least $1-\eps$, as long as the expected number edges in $G'$ is large enough. Similarly, $\q(G') \leq \q(G)+\eps$ with probability at least $1-\eps$, under the stronger condition that the expected average degree in $G'$ is large enough. Further, under this stronger condition, finding a good partition for $G'$ helps us to find a good partition for $G$.

We also consider the vertex sampling model for partially observing the underlying graph: we find that for dense underlying graphs we may estimate the modularity by sampling constantly many vertices and observing the corresponding induced subgraph, but this does not hold for underlying graphs with a subquadratic number of edges. Finally we deduce some related results, for example showing that under-sampling tends to lead to overestimation of modularity.
\end{abstract}

%\begin{keywords}%
%modularity, sampling complexity, robustness, random graphs, learning with network errors, parameter estimation%
%\end{keywords}

%%%%%%%%%%%%%%%%%%%%%%%%%%%%%%%%%%
%%%%%%%%%%%%%%%%%%%%%%%%%%%%%%%%%%
\section{Introduction and main results}
\label{sec.intro}

The modularity of a graph is a measure of the extent to which the graph breaks into separate communities.  For a given graph $G$, each partition $\cA$ of the vertices has a modularity score $q_{\cA}(G)$, with higher values indicating that the partition better captures community structure in $G$. The modularity $\q(G)$ of the graph $G$ is defined to be the maximum over all vertex partitions of the modularity score, and satisfies $0\leq \q(G) < 1$.
See Section~\ref{subsec.mod} for definitions and some background.

Suppose that there is an unknown underlying graph $G$ on a large given vertex set, and we can test only a small proportion of the possible edges to check whether they are present in $G$, or perhaps we can test many possible edges but there is a chance that we fail to spot an edge. We assume that there are no false positives, where there is no edge in $G$ but we think there is one, except briefly in the concluding remarks. We consider two questions. (a) If $G$ has high modularity, is the observed graph $G'$ likely to have high modularity?  In other words, if $G'$ has low modularity can we assert with some confidence that $G$ has low modularity?  (b)~Conversely, if $G$ has low modularity, is $G'$ likely to have low modularity?
We find that, in a natural model where we test possible edges independently at random the answer to (a) is yes, under the condition that the expected number of edges found is large enough; and the answer to (b) is yes, under the stronger condition that the expected average degree in $G'$ is large enough.

To investigate these questions we use the random graph models $G_p$ and $G_m$ described in subsections~\ref{subsec.modGp} and~\ref{subsec.modGm} below. First we recall the definition of modularity.

\needspace{3\baselineskip}
\subsection{Modularity : definition and notation}
\label{subsec.mod}
Given a graph $G=(V,E)$, modularity gives a score to each partition of the vertex set $V$; and the (maximum) modularity $\q(G)$ of $G$ is the maximum of these scores over all vertex partitions. Let~${\mathbf 1}_{uv\in E}$ be the indicator that $uv$ is an edge.  For a set $A$ of vertices, let $e(A)$ denote the number of edges within $A$, and let $\vol(A)$ denote the sum of the degree $d_v$ over the vertices $v$ in $A$. 
\begin{definition*}[\cite{NewmanGirvan}, see also~\cite{NewmanBook}]\label{def.mod}
Let $G$ be a graph with $m\geq 1$ edges. For a vertex partition $\AA$ of~$G$, the modularity score of $\cA$ on $G$ is 
\begin{eqnarray*}
 q_\AA(G) =  
\frac{1}{2m}\sum_{\A\in \AA} \sum_{u,v \in A} 
\left( {\mathbf 1}_{uv\in E} - \frac{d_u d_v}{2m} \right)
 = 
\frac{1}{m}\sum_{\A \in \AA} e(\A) - \frac{1}{4m^2}\sum_{\A\in \AA} \ds (\A)^2\,.
\end{eqnarray*}
The modularity of $G$ is $\q(G)=\max_\AA q_\AA(G)$, where the maximum is over all vertex partitions~$\cA$. (For an empty graph $G$ (i.e. with no edges, $m=0$): we set $q_{\cA}(G)=0$ for each vertex partition $\cA$, and $\q(G)=0$.) 
\end{definition*}
Directly from the definition we have $0 \leq \q(G) <1$ for all graphs. Note isolated vertices are irrelevant - they are not counted in the formula for the modularity score.
The second expression for $q_{\cA}(G)$ expresses it as the difference of the \emph{edge contribution} or \emph{coverage} $q^E_\AA(G)=\tfrac{1}{m}\sum_A e(A)$, and the \emph{degree tax} $q^D_\AA(G)=\tfrac{1}{4m^2}\sum_A\vol(A)^2$.

\needspace{3\baselineskip}
\subsection{Modularity of the random graph $G_p$ obtained by edge-sampling}
\label{subsec.modGp}
Given a graph $G$ and $0<p \leq 1$, let $G_p$ be the random subgraph of~$G$ on the same vertex set obtained by considering each edge of $G$ independently and keeping it in the graph with probability $p$ (and otherwise deleting it).  Thus the binomial or Erd\H{o}s-R\'enyi random graph $\Gnp$ may be written as $(K_n)_p$, where $K_n$ is the $n$-vertex complete graph.  Let $e(H)$ denote the number of edges in a graph $H$: thus the expected number of edges in $G_p$ is $e(G) p$. The first of our two theorems concerns when we want to have $\q(G_p) > \q(G)-\eps$ with probability near 1.
\begin{theorem}\label{thm.obsmod}
Given $\eps>0$ there exists $c=c(\eps)$ such that the following holds. For each graph~$G$ and probability $p$ such that $e(G)p \geq c$, the random graph $G_p$ satisfies $\q(G_p) > \q(G)-\eps$ with probability $\geq 1-\eps$.
\end{theorem}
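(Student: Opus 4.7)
The plan is to fix an optimal partition $\cA$ of $G$ (so $q_{\cA}(G) = \q(G)$) and show that $q_{\cA}(G_p) \geq \q(G) - \eps$ holds with probability at least $1-\eps$; since $\q(G_p) \geq q_{\cA}(G_p)$, this gives the theorem. Writing $q_{\cA}(H) = C(H) - T(H)$ with edge-coverage $C(H) = \eint(H)/e(H)$ and degree tax $T(H) = \sum_{A \in \cA}(\vol_H(A)/(2e(H)))^2$, it suffices to show $|C(G_p) - C(G)| \leq \eps/2$ and $T(G_p) - T(G) \leq \eps/2$, each with probability at least $1-\eps/2$, provided $pm \geq c(\eps)$ is large enough (here $m = e(G)$).

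For coverage, set $M_1 = \eint(G)$ and $M_2 = m - M_1$; then $M_1' := \eint(G_p) \sim \Bin(M_1, p)$ is independent of $M_2' \sim \Bin(M_2, p)$, and $m' = M_1' + M_2'$. A direct computation gives
$$ C(G_p) - C(G) = \frac{M_2 M_1' - M_1 M_2'}{m\, m'},$$
whose numerator has mean zero and variance $p(1-p) M_1 M_2\, m \leq pm^3/4$. Combining Chebyshev on this numerator with the standard Chernoff event $m' \geq pm/2$ (which holds with probability close to $1$ since $pm$ is large) yields the coverage bound.

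For the degree tax, set $R_A = \vol'(A) - p\vol(A)$, so $\vol'(A) = \sum_e c_e^A X_e$, where $X_e = \indic(e \in G_p)$ and $c_e^A \in \{0,1,2\}$ is the number of endpoints of $e$ in the part $A$. Then $\E R_A = 0$ and $\var R_A \leq 2p \vol(A)$, using $(c_e^A)^2 \leq 2 c_e^A$. Expanding $\vol'(A)^2 = (p\vol(A)+R_A)^2$ and summing gives
$$ \sum_A \vol'(A)^2 = p^2 \sum_A \vol(A)^2 + 2p \sum_A \vol(A)\, R_A + \sum_A R_A^2. $$
Dividing by $(2m')^2$ and subtracting $T(G)$ produces three contributions. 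The drift $((pm/m')^2 - 1)\,T(G)$ is $O(|m'/pm - 1|)$ and controlled by the concentration of $m'$. The quadratic remainder $\sum_A R_A^2 \geq 0$ has expectation at most $\sum_A 2p \vol(A) = 4pm$, so Markov yields $\sum_A R_A^2 = O(pm)$ whp, contributing $O(1/(pm))$ after division. The cross term, rewritten as $\sum_e (X_e - p) w_e$ with $w_e = \sum_A c_e^A \vol(A) \leq 2\max_A \vol(A) \leq 4m$ and $\sum_e w_e = \sum_A \vol(A)^2 \leq 4m^2$, has variance at most $p \sum_e w_e^2 \leq 16 p m^3$, so Chebyshev bounds its contribution by $O(1/\sqrt{pm})$ whp. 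Each contribution is $\leq \eps/6$ once $pm \geq c_2(\eps)$.

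The main obstacle is the quadratic remainder $\sum_A R_A^2$: because $\cA$ may have arbitrarily many parts, some with $p\vol(A)$ below any Chernoff threshold, we cannot concentrate $\vol'(A)$ part by part and union bound. The key point is that only an upper bound on the sum is required, and the uniform variance estimate $\sum_A \var R_A \leq 4pm$ combined with Markov's inequality suffices: the resulting deviation is on the scale of $pm$ and hence negligible against the denominator $(2m')^2$ of order $(pm)^2$.
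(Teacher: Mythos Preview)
Your argument is correct and takes a genuinely different route from the paper's. The paper first invokes the fattening lemma (Lemma~\ref{lem.nosmall2}) to replace an optimal partition by an $\eta$-fat one with at most $1/\eta$ parts, each of volume at least $\eta\,\vol(G)$; it can then apply Chebyshev to $e(G_p)$, $\eint(G_p)$ and each $\vol_{G_p}(A_i)$ separately and union-bound over the $O(1/\eta)$ parts. You instead work directly with an arbitrary optimal partition, side-stepping the fattening lemma entirely: for the coverage you exploit the algebraic identity $C(G_p)-C(G)=(M_2M_1'-M_1M_2')/(mm')$ to reduce to a single mean-zero variable; for the degree tax you control the potentially many small parts in aggregate, using $\E\sum_A R_A^2\le 4pm$ with Markov for the quadratic remainder and a single Chebyshev on $\sum_e (X_e-p)w_e$ for the cross term. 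Your approach is more self-contained (no auxiliary combinatorial lemma) and, tracking constants, yields $c(\eps)=O(\eps^{-3})$ rather than the paper's $O(\eps^{-5})$. The paper's fattening route, on the other hand, is reused later: it is the engine behind the union-bound over $\eta$-fat partitions in the proof of Theorem~\ref{thm.moddiff} and behind part~(b) on recovering a good partition for $G$, so within the paper as a whole it earns its keep.
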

The proof of Theorem~\ref{thm.obsmod} (in Section~\ref{sec.proofthm1}) will show that we may take $c(\eps)=\Theta(\eps^{-3}\log \eps^{-1})$. 
Following the proof, we give an example which shows that $c(\eps)$ must be at least $\Omega(\eps^{-1} \log \eps^{-1})$. See Figure~\ref{fig.sims} on page~\pageref{fig.sims} for simulations illustrating Theorem~\ref{thm.obsmod} (and Theorem~\ref{thm.moddiff}), and see Figure~\ref{fig.simsBIG} on~page~\pageref{fig.simsBIG} for simulations run on a larger underlying graph.

The second of our theorems concerns when we want also to have $\q(G_p) < \q(G)+\eps$ with probability near 1, and it shows that this will happen if the expected average degree is sufficiently large.  
We see also that, in this case, finding a good partition $\cA$ for $G_p$ helps us to find a good partition $\cA'$ for $G$. Observe that the expected average degree in~$G_p$ is $2 e(G)p/v(G)$, where $v(G)$ is the number of vertices in $G$.

\begin{theorem}\label{thm.moddiff}
For each $\eps>0$, there is a $c=c(\eps)$ such that the following holds.  Let the graph $G$ and probability $p$ satisfy $e(G)p/v(G) \geq c$.  Then, with probability $\geq 1-\eps$ the following statements (a) and (b) hold:
\begin{itemize}
 \item[(a)] the random graph $G_p$ satisfies $|\q(G_p) - \q(G)| < \eps$; and
 \item[(b)] given any partition $\cA$ of the vertex set, in a linear number of operations (seeing only~$G_p$) the greedy amalgamating algorithm finds a partition $\cA'$ with $ q_{\cA'}(G) \geq q_{\cA}(G_p) - \eps$.
\end{itemize}
\end{theorem}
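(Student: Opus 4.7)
The plan is to derive both parts from a single uniform claim: with probability at least $1-\eps$, every partition $\cA$ of $V(G)$ satisfies
\[
q_\cA(G) \geq q_\cA(G_p) - \eps/2.
\]
Granted this, specializing to the optimal partition $\cA^*(G_p)$ of $G_p$ yields $\q(G) \geq q_{\cA^*(G_p)}(G) \geq q_{\cA^*(G_p)}(G_p) - \eps/2 = \q(G_p) - \eps/2$; combined with the matching inequality $\q(G_p) \geq \q(G)-\eps/2$ from Theorem~\ref{thm.obsmod} (whose hypothesis $e(G)p\geq c(\eps)$ is implied by ours), this gives part (a). For part (b) I take $\cA' := \cA$ (requiring no computation at all), so that applying the uniform bound for this specific $\cA$ together with part (a) yields
\[
\delta_{\cA'}(G) = \q(G) - q_\cA(G) \leq \big(\q(G_p)+\eps/2\big) - \big(q_\cA(G_p) - \eps/2\big) = \delta_\cA(G_p) + \eps.
\]

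Concentration for a single fixed partition is routine. Writing
\[
q_\cA(G) = \frac{\eint(G)}{e(G)} - \sum_{A \in \cA}\left(\frac{\vol_A(G)}{2\,e(G)}\right)^2,
\]
each of $e(G_p)$, $\eint(G_p)$, and $\vol_A(G_p)$ is a sum of independent Bernoulli variables with parameter $p$, whose mean is $p$ times the corresponding quantity in $G$. Chernoff bounds give multiplicative concentration within $(1\pm\eps')$, and substituting into the formula yields $|q_\cA(G_p)-q_\cA(G)|\leq \eps/2$ with failure probability $\exp(-\Omega(\eps^2\,e(G)p))$. Under the hypothesis $e(G)p\geq c\cdot v(G)$ this is $\exp(-\Omega(\eps^2 c\,v(G)))$, small enough to absorb a union bound over any family of size subexponential in $v(G)$.

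The main obstacle is passing from per-partition to uniform concentration: a naive union bound over the $B_{v(G)}$ partitions of $V(G)$ is hopeless. The plan is to construct a canonical \emph{net} of partitions, of size $\exp(o(\eps^2 c\,v(G)))$, such that every partition is close in modularity score (on both $G$ and $G_p$ simultaneously) to some net member; one then union-bounds only over the net. Two natural compressions serve this goal: (i) merging parts of volume below an $\eps$-threshold into a single ``junk'' block, which costs at most $O(\eps)$ in modularity score and reduces to $O(1/\eps)$ significant parts; and (ii) discretizing the remaining significant parts by grouping vertices according to their degrees or their adjacency pattern to a small fixed ``landmark'' set, yielding a canonical family of size $v(G)^{O(1/\eps)}$. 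Verifying that these compressions preserve $q_\cA$ to within $\eps/4$ uniformly and compatibly with the sampling is the technically delicate step, and is where the average-degree hypothesis is essential: it guarantees that each degree class is plentiful enough for the rounding to be faithful.
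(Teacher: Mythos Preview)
Your high-level strategy---establish a uniform one-sided bound $q_\cA(G_p)\le q_\cA(G)+\eps/2$ and combine with Theorem~\ref{thm.obsmod}---is exactly the shape of the paper's argument. But the execution you sketch has two genuine gaps, and one of your steps is a red herring.

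\textbf{Gap 1: the per-partition bound is overstated.} The Chernoff exponent for $\eint(G_p)$ is $\Theta(\eps'^2\,\eint(G)\,p)$ and for $\vol_A(G_p)$ it is $\Theta(\eps'^2\,\vol_G(A)\,p)$, not $\Theta(\eps^2\,e(G)\,p)$. For partitions with thin parts or with $\eint(G)$ small, these means can be arbitrarily small and you get no useful bound. The paper handles this by restricting the union bound to the family $\cQ$ of partitions that are $\tfrac{\eta}{2}$-fat for $G$ (so each $\vol_G(A)\ge\tfrac{\eta}{2}\vol(G)$), and by splitting the edge-contribution analysis into the cases $\eint(G)\ge\tfrac{\eta}{2}e(G)$ and $\eint(G)<\tfrac{\eta}{2}e(G)$, treating the latter additively rather than multiplicatively.

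\textbf{Gap 2: the junk-block compression can lose $\Omega(1)$, not $O(\eps)$.} If the sub-threshold parts together carry most of the volume, merging them into a single block sends the degree tax to nearly $1$ while the edge contribution need not rise to compensate. (Take $1/\eta$ parts each of volume just below $\eta\,\vol(G)$.) What actually works is the paper's Fattening Lemma (Lemma~\ref{lem.nosmall2}): a greedy amalgamation that produces an $\eta$-fat partition while losing at most $2\eta$ in modularity score.

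\textbf{Step (ii) is unnecessary and not well-posed.} Once you fatten correctly, each partition in the net has at most $\lfloor 2/\eta\rfloor$ parts, so $|\cQ|\le(1+2/\eta)^n$. Against this, the Chernoff exponent on fat partitions is $\Omega(\eta^2\cdot\eta\, e(G)p)=\Omega(\eps^3 cn)$, and the union bound closes for $c\gtrsim\eps^{-3}\log\eps^{-1}$; this is precisely what the paper does. Your proposed landmark/degree discretization is not defined precisely enough to verify, and it is not needed.

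\textbf{A subtlety you have not resolved.} The paper does \emph{not} prove your uniform claim over all $\cA$; it proves it only for $\cA\in\cQ$. To bridge from an arbitrary $\cA$ to $\cQ$, the paper fattens $\cA$ \emph{in $G_p$} (this is observable) to get $\cA'$, and then shows via a separate event $\cB_1$ that $\eta$-fat for $G_p$ implies $\tfrac{\eta}{2}$-fat for $G$. Because the fattening is done in $G_p$, one controls $q_{\cA'}(G_p)$ relative to $q_\cA(G_p)$ but not $q_{\cA'}(G)$ relative to $q_\cA(G)$; consequently the paper's part~(b) outputs $\cA'=\cA(G_p,\eta,\cA)$, not $\cA'=\cA$. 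Your stronger conclusion ($\cA'=\cA$) would require the full uniform bound over all partitions, which the net argument above does not deliver on its own.
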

Part (b) says roughly that, given a good partition $\cA$ for $G_p$, we can quickly construct a good partition $\cA'$ for $G$. 
Using also part (a), we may see that, if the partition $\cA$ for $G_p$ satisfies $q_\cA(G_p) \geq \q(G_p) -\eps$ with probability at least $1-\eps$, then the partition $\cA'$ satisfies $q_{\cA'}(G) > \q(G)-2\eps$ with probability at least $1-2\eps$. See Section~\ref{sec.fat} for the greedy amalgamating algorithm used to construct~$\cA'$.
The proof of Theorem~\ref{thm.moddiff} (in Section~\ref{sec.proofthm2}) will show that we may take $c(\eps)=\Theta(\eps^{-3} \log \eps^{-1})$. Following the proof, we give an example which shows that $c(\eps)$ must be at least~$\Omega(\eps^{-2})$.

The assumption in Theorem~\ref{thm.moddiff} that the expected average degree is large is of course much stronger than the assumption in Theorem~\ref{thm.obsmod}, but still $e(G_p)$ may be much smaller than $e(G)$. If we go much further, and assume that at most an $\eps$-proportion of edges are missed, that is $|E \setminus E'| \leq \eps |E|$, then deterministically we have $|\q(G')-\q(G)| \leq 2 \eps$ by~\cite{ERmod}, see Section~\ref{subsec.robustness}.

\needspace{3\baselineskip}
\subsection{Modularity of the random graph $G_m$ obtained by limited search}
\label{subsec.modGm}

Let $G_m$ be the graph on vertex set $[n]$ with edge set a uniformly random $m$-edge subset of $E(G)$. The graph $G_m$ may also be sampled by a randomised procedure to find edges of~$G$, where we stop once we find `enough' edges:  we are given $n$, there is an unknown graph $G$ on $V= [n]$, and we query possible edges of $G$, uniformly at random, until we find $m$ edges. 
\begin{corollary} \label{cor.obsmod}
Given $\eps>0$ there exists $m_0=m_0(\eps)$ such that the following holds. For all $m \geq m_0$, the random graph $G_m$ satisfies $\q(G_m)> \q(G) - \eps$ with probability $> 1-\eps$.
\end{corollary}
\begin{corollary} \label{cor.moddiff}
Given $\eps>0$ there exists $c=c(\eps)$ such that the following holds.  
For all $m \geq c n$, the random graph $G_{m}$ satisfies the conclusions of Theorem~\ref{thm.moddiff} when $G_p$ is replaced by~$G_{m}$.
\end{corollary}

These corollaries follow easily from Theorems~\ref{thm.obsmod} and~\ref{thm.moddiff} respectively using Lemma~\ref{lem.GpGm}, which shows that $\q(G_p)$ and $\q(G_m)$ (and similarly for $q_\cA$) are whp close in value for $p=m/e(G)$. The lemma will be proved in Section~\ref{subsec.closeness} (the lemma also shows that these quantities are close in expectation, which will be used in Section~\ref{sec.expmod}).

\begin{figure}
%[b!]
\begin{picture}(170,450) 
	\put(20,250){ 
		\includegraphics[width=0.95\textwidth]{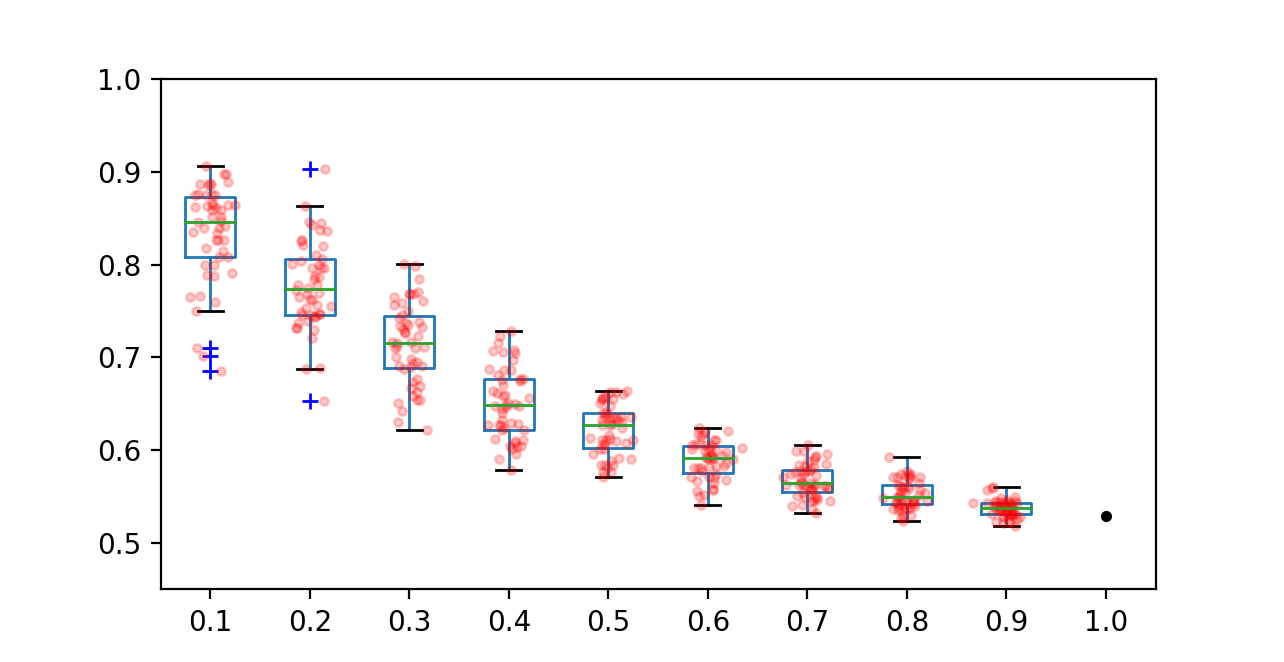}
		\put(-435,95){\rotatebox{90}{$\tilde{q}(G_p)$}} 
	    \put(-313,207){Estimated modularity of sampled graph} 
	    \put(-210,-3){$p$}
		}
	\put(20,2){
		\includegraphics[width=0.95\textwidth]{./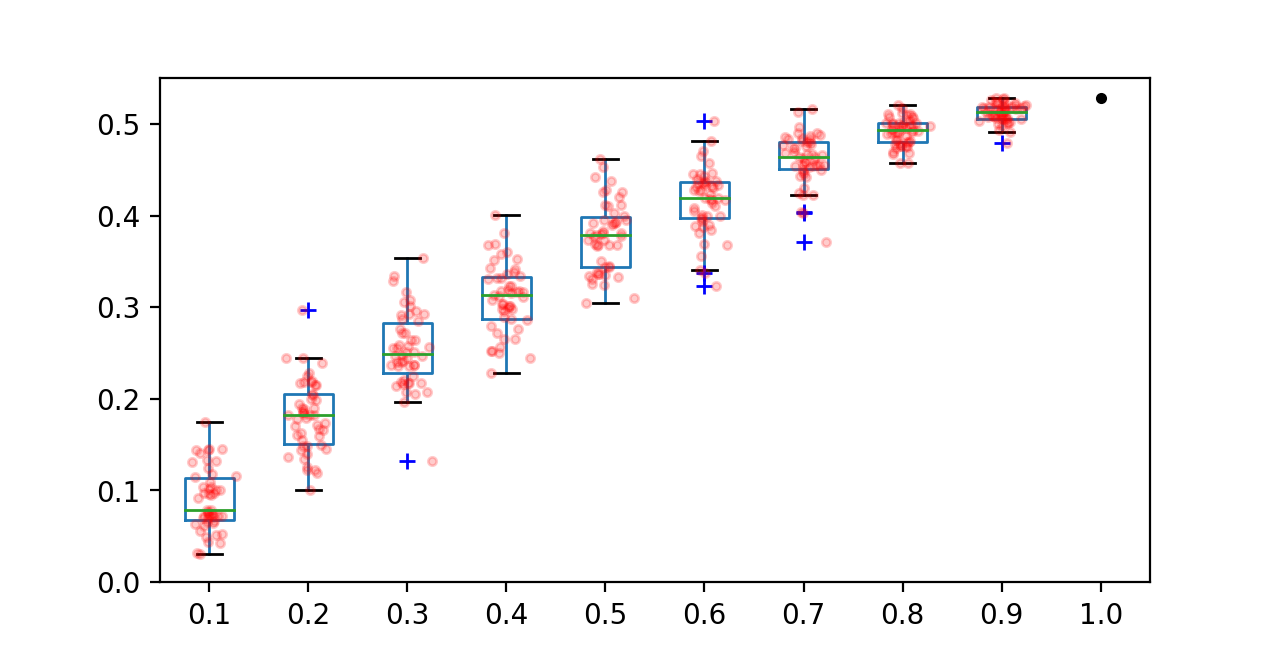}
		\put(-435,95){\rotatebox{90}{$q_{\cA'(G_p)}(G)$}} 
	    \put(-313,223){Modularity score of underlying graph using}
	    \put(-313,207){partition estimated from sampled graph}
	    \put(-210,-3){$p$}
		}
\end{picture}
\caption{ %\small
Simulation results. The dolphin social network~\cite{lusseau2003emergent} with 62 vertices and 159 edges was taken to be the underlying graph $G$. It is known that $\q(G) = 0.529$ to three decimal places~\cite{nphard}. In the upper part of the figure each red point corresponds to the estimated modularity $\tilde{q}(G_p)$ of an instance of the sampled graph $G_p$.
For each edge probability $p=0.1, 0.2, \ldots, 0.9$, the graph $G_p$ was sampled 50 times. For each sampled graph $G_p$ we took the maximum modularity score of the partitions output by 200 runs of both the Louvain~\cite{louvain} and Leiden~\cite{traag2019leiden} algorithms. The noise in the $x$-axis is to allow one to see the points.\\
In the lower part of the figure we examine, for each random instance of $G_p$, how well the modularity maximising partition of $G_p$ performs as a partition on the underlying graph $G$. For each sampled graph $G_p$ we plot the score $q_{{\cA}'(G_p)}(G)$, where $\cA(G_p)$ is the highest scoring partition on $G_p$ found in 200 runs of Louvain and Leiden and~$\cA'(G_p)$ is the partition modified as in Theorem~\ref{thm.moddiff}(b) (with $\eta=0.05$ in Lemma~\ref{lem.nosmall2}).
See also Figure~\ref{fig.simsBIG} for simulations run on a larger underlying~graph.
}\label{fig.sims}
\end{figure}

\needspace{3\baselineskip}
\subsection{Estimating modularity by vertex sampling (`parameter estimation')}
\label{subsec.estim}
Another commonly considered model of partially observing an underlying graph $G$ is to sample a vertex subset $U$ of constant size $k$ and observe the corresponding induced subgraph~$G[U]$. In Section~\ref{sec.estim} and Theorem~\ref{thm.modest} we consider this model. We find that for dense graphs $G$, with probability at least $1-\eps$ we may estimate the modularity to within an $\eps$ error (that is $|\q(G)-\q(G[U])|< \eps$) by sampling a constant $k=k(\eps)$ number of vertices ; but for graphs with a subquadratic number of edges this result does not hold, modularity is not estimable. 

\begin{theorem} \label{thm.modest}
(a) For fixed $\rho$ with $0<\rho<1$, modularity is estimable for graphs with density at least~$\rho$.
(b) For any given function $\rho(n) = o(1)$, modularity is not estimable for $n$-vertex graphs with density at least $\rho(n)$.
\end{theorem}

\needspace{3\baselineskip}
\subsection{Outline of the paper}
\label{subsec.outline}
The plan of the rest of the paper is as follows. In Section~\ref{sec.further_and_relation}, we first show an application of our results to the stochastic block model in Section~\ref{subsec.bootstrap}, then provide an overview of the further results of this paper in Section~\ref{subsec.furtherresults} and lastly give some background and the relation of this paper to previous results in Section~\ref{subsec.relation}. 

Sections~\ref{sec.fat} to~\ref{sec.proofthm2} give the proofs of the main results of the paper: Section~\ref{sec.fat} gives a crucial preliminary lemma for the proofs, the `fattening lemma'; and Theorem~\ref{thm.obsmod} and Theorem~\ref{thm.moddiff} are proven in Section~\ref{sec.proofthm1} and Section~\ref{sec.proofthm2} respectively. 
(Indeed we also prove versions of these results which take into account the number of parts in the partition.) 

In Section~\ref{sec.robust_close_conc} we give a `robustness' lemma showing that $\q(G)$ and $q_\cA(G)$ do not change much if we change a small proportion of edges; then we prove Lemma~\ref{lem.GpGm} on $G_p$ and $G_m$, which completes the proof of Corollaries~\ref{cor.obsmod} and~\ref{cor.moddiff}; and finally we give related results on concentration of modularity. We prove Theorem~\ref{thm.modest} on estimating modularity by vertex sampling in Section~\ref{sec.estim}. 

The later sections, Section~\ref{sec.undersamp} and~\ref{sec.expmod} contain further related results. In Section~\ref{sec.undersamp} we see that under-sampling tends to lead to over-estimation of modularity (using Theorem~\ref{thm.obsmod}). In Section~\ref{sec.expmod} we show that Theorem~\ref{thm.obsmod} implies results on the expected modularity of random graphs $G_p$ and~$G_m$ with constant average degree. 
In Section~\ref{sec.wmod_new} we give results analogous to Theorems~\ref{thm.obsmod} and \ref{thm.moddiff} for weighted networks and show an application of these results for the stochastic block model. Finally Section~\ref{sec.concl} contains a few concluding remarks and open questions.

In the appendix we give simulations run on a larger underlying graph in Section~\ref{sec.simsBIG}.

%%%%%%%%%%%%%%%%%%%%%%%%%%%%%%%%%%%%%%%%%%%%%%%%%%%%
\needspace{6\baselineskip}
\section{Further results and background on existing results}
\label{sec.further_and_relation}

\needspace{3\baselineskip}
\subsection{Bootstrapping to sparser graphs and application to stochastic block model.}\label{subsec.bootstrap}
\paragraph*{Modularity preserves signal a little below the connectivity threshold.} 
This property sets modularity apart from  other measures of community in a network. If $s(H)$ is min-cut, normalised min-cut or the spectral gap of the Laplacian of $H$ then $s(H)=0$ if $H$ is disconnected. Thus, given a connected graph $G$, for $s(G_p)$ to approximate $s(G)$ for these measures $s$ we must take $p$ large enough that $G_p$ is likely connected.

To see that our results can hold below the connectivity threshold, consider $G=K_n$ (so $\q(G)=0$) and $p=c/n$ with $c$ a large constant. Then $|\q(G_p)-\q(G)|<\eps$ with probability~$1-\eps$, see for example Theorem 1.3 of~\cite{ERmod}. 
But whp $G_p$ is disconnected -- indeed it will have a linear proportion of the vertices and edges inside the giant component, and a linear proportion outside. Furthermore we may take our underlying graph to be disconnected - for example if $G$ consists of two disjoint equal sized cliques then it has modularity $1/2$, and this will again be approximated by $\q(G_p)$ for $p=c/n$ with some large constant $c$.

\needspace{3\baselineskip}
\paragraph*{Stochastic block model, a little below the connectivity threshold $\Theta(n^{-1} \log n)$.}
Let $k \geq 2$ and $0 \leq q \leq p \leq 1$. There are two versions of the balanced $k$-community stochastic block model, where edges appears independently with probability~$p$ within blocks and probability $q$ between blocks. In the first version, $G_{n, k,p,q}$, the vertex set $V=[n]$ is partitioned deterministically into $k$ sets (blocks) $V_i$  of size $\lfloor n/k \rfloor$ or $\lceil n/k \rceil$; and in the second, $G'_{n, k,p,q}$, each vertex independently and uniformly picks a block to join (so the blocks have size about $n/k$ whp).

We first give an example where bounds at some (not too small) edge density are known from spectral results, and a version of Theorem~\ref{thm.moddiff} allows us to bootstrap these results to sparser models.  After that we present Theorem~\ref{thm.SBMk} concerning the general $k$-community model.

The bootstrapping example involves $G_{n,2,p,q}$. Write $\qtwo$ for the maximum modularity value over all partitions into at most two parts, and note that $\qtwo$ is at least the modularity score of the planted bipartition. Thus by direct calculation - see for example similar calculations in~\cite{koshelev2023modularity, ERmod} - if $n^2p \rightarrow \infty$  then 
\begin{equation}\label{eq.SBM_ub}
\qtwo(G_{n, 2, p, q})\geq \frac{p-q}{2(p+q)}+o(1) \;\;\; \mbox{whp}.
\end{equation}
Using existing spectral results from~\cite{deng2021strong,modexpansion} we may deduce that the lower bound in~\eqref{eq.SBM_ub} is tight for some values of $p,q = \Theta(n^{-1} \log n)$. (We suppress the details here, see Remark~\ref{rem.bootstrap}.) Interestingly, one may then use Proposition~\ref{prop.moddiff_k} (a version of Theorem~\ref{thm.moddiff} for $k$-partitions) to bootstrap these results to sparser graphs, where $p,q =\omega (n^{-1})$ - see Remark~\ref{rem.bootstrap}. Note that this includes values of $p$ and $q$ for which the spectral results do not hold (since for part of the range~$G_{n,2,p,q}$ is disconnected~whp).

The tightness of~\eqref{eq.SBM_ub} tells us that whp the planted partition has asymptotically maximal modularity value over all bipartitions. We now consider the modularity value for the $k$-block model, and see that the planted partition is asymptotically optimal over \emph{all} partitions. 

\begin{theorem} \label{thm.SBMk}
Let $k \geq 2$ be an integer, and let $p=p(n)$ and $q=q(n)$ satisfy $0 \leq q \leq p \leq 1$ and $np \to \infty$ as $n \to \infty$. Then for $G=G_{n, k, p, q}$ or $G=G'_{n, k, p, q}$  
\[ \q(G) = \frac{(p-q) \, (1 - 1/k)}{ p + (k-1)q} +o(1) \;\;\; \mbox{whp},\]
and whp the planted partition has this modularity value.
\end{theorem}

We shall prove Theorem~\ref{thm.SBMk} in Section~\ref{sec.wmod_new}, using a deterministic lemma, Lemma~\ref{lem.weightedSBMk}, together with Theorem~\ref{thm.moddiff_wb}, which is a version of Theorem~\ref{thm.moddiff} with weighted underlying graph.  We note that \cite{bickel2009nonparametric, bickel2015correction} showed that whp the modularity optimal partition will agree with the planted partition except for $o(n)$ vertices (for $p=\omega(1/n)$ and $q=\rho p$ for some fixed $\rho$, $0<\rho <1$).
Thus we could also have proven Theorem~\ref{thm.SBMk} for such $p,q$ 
via robustness results, see Lemma~\ref{lem.robustness}, and the likely modularity value of the planted partition. 

The  recent paper~\cite{koshelev2023modularity} gives explicit bounds on the modularity.
We provide a simple stand-alone proof of Theorem~\ref{thm.SBMk}, following~\cite{koshelev2023modularity} in using weighted graphs. Our result extends that in~\cite{koshelev2023modularity} as we reduce the upper bound there by a factor $1-1/k$ to match the lower bound, and we extend the range of $p$ from $\omega(n^{-1/2})$ to $\omega(n^{-1})$.  Theorem~\ref{thm.SBMk} did not appear in the earlier arXiv version~\cite{samplingold} of our paper.

\needspace{3\baselineskip}
\subsection{Further results in this paper}\label{subsec.furtherresults}

\needspace{1\baselineskip}
\paragraph*{Robustness, concentration and closeness of modularity.}
For either $q_\cA(G)$ or $\q(G)$ modifying a single edge can change the value by at most $2/e(G)$. This was known for $\q(G)$ (see \S5 of \cite{ERmod}). In Section~\ref{subsec.robustness} we prove the $2/e(G)$ bound holds also for any given partition $\cA$, and in Example~\ref{ex.robustness} show examples such that the factor 2 is necessary in both cases. These robustness results give the concentration theorem below - see Section~\ref{sec.robust_close_conc}. 

\begin{theorem}\label{thm.bconc}
There is a constant $\eta>0$ such that for each graph $G$, each partition $\cA=\cA(G)$ and each $0<p<1$ the following holds with $\mu=\mu(G,p) = e(G) p$.  
For each $t \geq 0$ 
\begin{equation*} \pr\Big( \big| \, q_\cA(\Gp) - \E[q_\cA(\Gp)] \, \big| \geq t \Big) < 2 \, e^{-\eta \mu t^2} 
\;\mbox{ and }\;
\pr\Big( \big| \, \q(\Gp) - \E[\q(\Gp)] \, \big| \geq t \Big) < 2 \, e^{-\eta \mu t^2}.\end{equation*}
\end{theorem}
By Theorem~\ref{thm.bconc} we have concentration for $\q(G_p)$ around $\E[\q(G_p)]$ as soon as $e(G)p$ is large.  In more detail,  $\pr( |\q(G_p) - \E[\q(G_p)]| \geq \eps) < \delta$ if $e(G)p > c \eps^{-2} \log \delta^{-1}$ for a (large) constant $c$. But, results on binomial random graphs (Theorem 1.1 (b) of \cite{ERmod}) show that, when $G$ is $K_n$, we need large average expected degree (i.e.\ $e(G)p/v(G)$ large) to avoid $\q(G_p)$ being much larger than $\q(G)$ whp. Thus we get concentration of $\q(G_p)$ as soon as $e(G)p$ is a large constant, however, this may not be concentration  around $\q(G)$ until the average expected degree $e(G)p/v(G)$ is a large constant.

\paragraph*{Under-sampling and over-estimating modularity.} 
In Ecological networks each interaction observed reveals that an edge is present in the underlying network, and the effect of sampling effort can be modelled by taking the observed network after varying numbers of observations. It was noted in multiple papers on ecological networks that a lower sampling effort, under-sampling, can lead to overestimating the modularity of the underlying ecological network
~\cite{vizentin2016influences}. Our paper provides some theoretical explanations - see~Section~\ref{sec.undersamp} for a statement of the result.

\needspace{3\baselineskip}
\paragraph*{Expected modularity of the binomial random graph $G_{n,c/n}$.}
Given a constant $c>0$, for each $n \geq c\,$ we let $\bar{q}(n,c) =  \E[q^*(G_{n,c/n})]$, where $G_{n,c/n}$ is the binomial (or Erd\H{o}s-R\'enyi) random graph with edge-probability $c/n$. It was conjectured in \cite{ERmod} that for each $c>0$, $\bar{q}(n,c)$ tends to a limit $\bar{q}(c)$ as $n \to \infty$, and it was noted there that in this case the function $\bar{q}(c)$ would be continuous in~$c$.   From Theorem~\ref{thm.obsmod} we may deduce that also $\bar{q}(c)$ would be non-increasing in~$c$ - see Section~\ref{sec.expmod}.

\paragraph*{Modularity and edge-sampling on weighted networks.}
Network data which is of interest to cluster often has weights associated with each edge. Though we have stated the modularity score of a partition for binary edge weights it is simple to take the weight of edges inside each part (instead of the number of edges) and to take the degree of a vertex $v$ to be the sum of the weights of the edges incident to $v$ (instead of the number of edges). This weighted modularity is often used, and indeed the popular community detection algorithm Louvain can take weighted networks as input~\cite{louvain}. Our Theorems~\ref{thm.obsmod} and~\ref{thm.moddiff} have analogs for weighted networks -  see Section~\ref{sec.wmod_new}.

\needspace{3\baselineskip}
\subsection{Background on existing results, and our contribution }\label{subsec.relation}
\paragraph*{Modularity : use in community detection.}
Modularity was introduced in~\cite{NewmanGirvan}, and gives a score to each vertex partition (i.e.\ commmunity division) and partitions with higher scores are considered to better capture the communities in the network. It is NP-hard to find a partition~$\cA$ with the highest modularity score (i.e.\ such that $q_\cA(G)=\q(G)$) \cite{nphard} and community detection algorithms do not do this. However, it is fast to compute the modularity of a particular partition and hence it can be feasible to choose which modifications to make to a partition by picking the candidate partition with the highest modularity score. 
Louvain~\cite{louvain} and Leiden~\cite{traag2019leiden} are examples of this. The algorithms are fast and have had success in recovering ground truth communities on real world networks. However, there no theoretical guarantees for either that the partition found is near optimal, though recently~\cite{cohen2020power} showed that a Louvain-like algorithm recovers the communities in the stochastic block model for a wide parameter range. 

Modularity-based clustering algorithms are the most commonly used to detect communities on large network data~\cite{popular, lambiotte2021modularity} - see~\cite{fortunato2010community} and~\cite{porter2009communities} for surveys. 
The widespread use in applications makes modularity an important graph parameter to understand theoretically.

\needspace{3\baselineskip}
\paragraph*{
Informing applied network theory : privacy.}
Sharing network data can lead to privacy concerns - one approach is to share a subsampled graph $G_p$ instead of~$G$~\cite{romanini2021privacy}. A claimed advantage is that $G_p$ retains many properties of the underlying graph $G$ and that parameters of $G$ may be estimated knowing only $p$ and $G_p$. Examples considered in~\cite{romanini2021privacy} include vertex degrees and number of triangles.

Our contribution is to determine when the modularity of $G$ is well approximated by the modularity of~$G_p$. Furthermore, in part (b) of Theorem~\ref{thm.moddiff}, we see that for $p$ large enough we can likely obtain a near optimal partition of the underlying graph $G$ whilst seeing only the shared graph $G_p$. (Here we mean possible in an information theoretic sense - we do not consider the complexity of finding the partition.) 
Given the shared graph $G_p$ and a near optimal partition $\cA$ of the shared graph, we construct a partition which is likely to be near optimal for the underlying (non-shared) graph $G$.

\needspace{3\baselineskip}
\paragraph*{Robustness and percolation : existing results for random subgraphs of fixed graphs.}
Given a graph $G$ with property $\mathcal{P}$ we say that $G$ \emph{robustly} has property~$\mathcal{P}$ if whp $G_p$ has property~$\mathcal{P}$~\cite{sudakov2017robustness}. A seminal such result is in~\cite{krivelevich2014robust} which showed there is an absolute constant $c$ such that for $p \geq c (\log n)/n$ and underlying graph $G$ with minimum degree $n/2$ then whp the observed graph~$G_p$ is Hamiltonian. This was later strengthened to a hitting time result, i.e. taking edges of the underlying graph in a random order,  in~\cite{johansson2020hamilton}. 

Another property that has been considered is expansion - a measure of the number of edges leaving each vertex set relative to the size of that set. For $d$-regular $G$ with good expansion properties the expansion in the giant component of $G_p$ was studied in~\cite{ofek2007expansion} and~\cite{diskin2022expansion}, see also~\cite{diskin2023isoperimetric}. Additionally non-planarity of $G_p$ for $G$ with a specified minimum degree was studied in~\cite{frieze2013non} and for planar underlying graphs \cite{liu2023phase} determines the threshold for $p$ such that $G_p$ is 3-colourable for every planar~$G$. 

Our contribution is to consider robustness of modularity : for $G$ with at least a large constant number of edges and $p$ large enough we get likely lower bounds on $\q(G_p)$ and for $G$ with average degree at least a large constant we also get likely upper bounds on $\q(G_p)$.

\needspace{3\baselineskip}
\paragraph*{Parameter estimation, vertex sampling and network measures : existing work.} 
There is a well developed field of parameter estimation which asks for which parameters is it possible to estimate the parameter of an underlying graph given access to an induced subgraph on a random vertex subset of constant size. For an excellent introduction, including the relation to the theory of graph limits see~\cite{BCLSVjournal, lovasz2012large}. 
There are related results in~\cite {bolla2012testability} :
this paper analysed testability of graph parameters relating to minimising the number of edges between parts, some of which were normalised with respect to the size of the parts but in a different way to modularity, and found some of these parameters not to be testable. Our results complement these by showing that modularity is not testable in general but is testable for dense graphs.

\needspace{3\baselineskip}
\paragraph*{Modularity : existing results for random graphs.}
Recall that the modularity value of a graph is always in the interval $[0,1)$ with higher values taken to indicate a higher extent of community structure. The modularity of the binomial random graph $\Gnp$ exhibits three phases, see~\cite{ERmod}: for sparse graphs (where $np\leq 1+o(1)$) modularity is near~1 whp, for dense graphs (where $np\rightarrow \infty$) it is near~$0$ whp, and inbetween (where $c_1 \leq np \leq c_2$ for constants $1<c_1<c_2$) it is bounded away from $0$ and $1$ whp. Note that for $G=K_n$, we have $np \sim \frac12 e(G)p/n$ and $\q(K_n)=0$~\cite{nphard}, and hence this result on $\Gnp$
in the dense regime is recovered by our Theorem~\ref{thm.moddiff}.
It is also shown in~\cite{ERmod} that, for $1/n < p <0.99$ whp $\q(G_{n,p})=\Theta(1/\sqrt{np})$.

Random regular graphs have received recent attention in~\cite{lichev2022modularity}; and in particular it is shown that  the random cubic graph $G_{n,3}$ whp has modularity in the interval $[0.667, 0.790]$, improving on earlier results. For large $r$ whp $\q(G_{n,r})=\Theta(1/\sqrt{r})$~\cite{treelike, pralat} - note that this is the same order as for a binomial random graph with expected degree $r$.
The random hyperbolic graph whp has modularity asymptotically~$1$ \cite{chellig2022modularity}, and so does the preferential attachment tree, though if $h\geq 2$ edges are added at each step as in the model $G_{n}^h$ then $\Omega(1/\sqrt{h}) = \q(G_n^h) <0.94$ whp~\cite{pralat}. The modularity of the stochastic block model, and of a degree-corrected version, have been considered in~\cite{koshelev2023modularity, bickel2009nonparametric, bickel2015correction, zhao2012consistency}, see also Theorem~\ref{thm.SBMk}.

%%%%%%%%%%%%%%%%%%%%%%%%%%%%%%%%%%%%
%%%%%%%%%%%%%%%%%%%%%%%%%%%%%%%%%%%%

\needspace{6\baselineskip}
\section{The fattening lemma for vertex partitions}
\label{sec.fat}

Given a graph $G$ and $0<\eta \leq 1$, we say that a vertex partition $\cA$ is $\eta$-\emph{fat} (or $\eta$-\emph{good}) if each part has volume at least $\eta \, \vol(G)$. We shall describe a greedy algorithm which, given a graph~$G$, $0<\eta \leq 1$ and a vertex partition $\cB$, amalgamates some parts in $\cB$ to yield an $\eta$-fat partition $\cA=\cA(G,\eta,\cB)$ with modularity score at most a little less than that of $\cB$. Note that $|\cA| \leq |\cB|$.
\begin{lemma}[The fattening lemma]\label{lem.nosmall2}
For each non-empty graph $G$ and each $0<\eta \leq 1$, there is an $\eta$-fat partition $\cA$ of $V(G)$ such that $q_{\cA}(G) > \q(G) - 2\eta$. Indeed, given any partition $\cB$ of $V(G)$, the greedy amalgamating algorithm uses a linear number of operations and constructs an $\eta$-fat partition $\cA =\cA(G,\eta,\cB)$ such that $q_{\cA}(G) > q_{\cB}(G) - 2\eta$.
\end{lemma}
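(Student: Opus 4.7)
The plan is to first reduce the problem to bounding the increase in the degree tax, and then design and analyse a one-pass amalgamation. When we merge two parts into one, the edge-contribution term $\frac{1}{m}\sum_A e(A)$ cannot decrease (edges between the merged parts become internal), so it suffices to control the increase in the degree tax $\frac{1}{4m^2}\sum_A \vol(A)^2$. Writing $x_A = \vol(A)/\vol(G)$, this tax equals $\sum_A x_A^2$, and we always have $\sum_A x_A = 1$.

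The algorithm I have in mind operates as follows. Call a $\cB$-part \emph{small} if $x_B < \eta$ and \emph{big} otherwise; big parts are kept as singleton $\cA$-parts and so contribute equally to the initial and final degree tax. The small parts (of total normalized volume $\sigma$) are clustered in one sweep: maintain an open cluster, append small parts in some order, and close off the current cluster the moment its running total first reaches $\eta$. Because each small part has $x_s < \eta$, every closed cluster has total in $[\eta, 2\eta)$. If a residual open cluster of total $<\eta$ remains at the end, it is absorbed into an already $\eta$-fat class — either a previously closed cluster or a big $\cB$-part — at least one of which must exist, for otherwise the total normalized volume would be less than $\eta$ and hence less than $1$, a contradiction.

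The tax bookkeeping is then direct. The change $\sum_A x_A^2 - \sum_B x_B^2$ equals $\sum_{\text{clusters } C}\bigl[x_C^2 - \sum_{B \subseteq C} x_B^2\bigr]$, the contributions from singleton big parts cancelling. For each cluster $C$ with $x_C < 2\eta$ we have $x_C^2 < 2\eta\, x_C$, and summing over clusters gives $\sum_C x_C^2 < 2\eta \sum_C x_C \leq 2\eta$, strictly. Combined with the monotonicity of the edge-contribution term this yields $q_\cA(G) > q_\cB(G) - 2\eta$; taking $\cB$ to be an optimal partition of $G$ yields the first sentence of the lemma. The running time is linear since we make one scan over the parts of $\cB$.

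The main obstacle is the bookkeeping for the residual open cluster: naively dumping it into the most recently closed cluster can push that cluster's volume up to nearly $3\eta$, breaking the clean inequality $x_C < 2\eta$. The fix is to exploit the strict slack in $x_C^2 < 2\eta\, x_C$ together with the negative term $-\sum_{B \subseteq C} x_B^2$, or equivalently to be careful about which fat class receives the residual — for example by splitting the residual and redistributing its small parts across existing clusters that still have capacity up to $2\eta$ — so that no cluster ever exceeds $2\eta$ in normalized volume. This is the one place where some real care is required; the rest of the argument is essentially a direct calculation.
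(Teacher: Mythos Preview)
Your reduction is correct: merging parts can only increase the edge contribution, so it suffices to show that the amalgamation increases the degree tax $\sum_A x_A^2$ by less than $2\eta$. The difficulty you flag --- handling the leftover cluster --- is exactly the crux, and it is not resolved by either of the fixes you sketch.

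Your second fix, redistributing the residual so that \emph{every} final cluster has normalised volume below $2\eta$, is simply impossible in general. Take $\eta=0.4$ and three small parts $0.39,\,0.39,\,0.22$ (so $\sigma=1$ and there are no big parts). The only $\eta$-fat partition of these three numbers is the trivial one, with a single cluster of volume~$1$; there is no way to keep all clusters below $2\eta=0.8$. Your first fix --- absorb the residual somewhere and then argue via the slack and the subtracted term $\sum_{B\subseteq C} x_B^2$ --- happens to succeed in this particular example (the increase is $1-0.3526<0.8$), but you give no argument that it always does, and the calculation needed is not the ``direct'' one you describe. Once a cluster is allowed to reach nearly $3\eta$, the inequality $x_C^2<2\eta\,x_C$ fails, and one has to use quantitative information about the parts inside the bad cluster; you have not shown how to do this.

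The paper takes the opposite, top-down route and thereby sidesteps the residual issue entirely. It first proves a sharp bipartition lemma: for any probability vector $\xvec$, a greedy bipartition achieves $\gamma(\xvec)\ge\tfrac12\bigl(1-\sum_i x_i^2\bigr)$. The greedy amalgamating algorithm then starts from the \emph{trivial} partition of the parts of $\cB$ and repeatedly bipartitions any part that can be split with both sides of normalised volume at least~$\eta$. When the algorithm halts, each surviving part $A_j$ (with $S_j=\sum_{i\in A_j}x_i$) fails the splitting test, which via the bipartition lemma forces $1-\sum_{i\in A_j}(x_i/S_j)^2<2\eta/S_j$; summing $S_j^2$ times this over~$j$ gives $\sum_j S_j^2-\sum_i x_i^2<2\eta$ directly. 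This analysis never produces a leftover piece of sub-threshold volume, so the problem you are wrestling with does not arise.
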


For comparison, note the neat result of Dinh and Thai~\cite{dinh2011finding}  %(2011), Lemma 1
that, for each graph $G$ and positive integer $k$, we have
\begin{equation}\label{eq.DinhThai} \qk(G) \geq \q(G)\,(1-1/k),\end{equation}
where  $\qk(G)$ is the maximum modularity score over partitions with at most $k$ parts. Observe that if $\cA$ is an $\eta$-fat partition for $G$ and $k = \lfloor 1/\eta \rfloor$, then $q_{\cA}(G) \leq q_{\leq k}(G)$. However note neither approximation result implies the other.
The constant 2 in Lemma~\ref{lem.nosmall2} is best possible, as shown by the following example.

\begin{example}\label{ex.2bestpossible}
Fix $0<b<2$.  Let the odd integer $k$ be sufficiently large that
\[ 1- \tfrac1{k} > b\, (\tfrac12 - \tfrac1{6k}),\]
so there exists $\eta$ such that $\eta  >\tfrac12 - \tfrac1{6k}$ and
$1-\tfrac1{k} > b \eta$.
Let the graph $G$ consist of $k$ disjoint triangles. Thus $G$ has $3k$ vertices and $\vol(G)=6k$.
Since $\eta>\tfrac{1}{2} - \tfrac1{6k}$ the only $\eta$-fat partition for $G$ is the trivial partition $\TT$, with modularity score 0.  Also $\q(G)=1- \tfrac1{k}$ (achieved with the connected components partition), see Proposition~1.3 of \cite{treelike}.
Thus
\[ \q(G) -q_{\TT}(G) = 1- \tfrac{1}{k} > b \eta,\]
so in Lemma~\ref{lem.nosmall2} we cannot replace 2 by $b$.
\end{example}

To prove Lemma~\ref{lem.nosmall2}, it of course suffices to prove the second part.  The greedy amalgamating algorithm to yield a good $\eta$-fat partition is essentially a greedy algorithm for numbers, and we describe that first. The main step in the number greedy algorithm involves bipartitions, and the following well known problem.
Before stating the problem, let us note some standard easy inequalities which are useful for the number (bi-) partitioning problem and for considering degree tax.
Given $x_1,\ldots,x_k \geq 0$ with $\sum_i x_i = s$, we have \begin{equation}\label{eqn.obs} s \, \min_i x_i \leq s^2/k \leq \sum_i x_i^2 \leq s\, \max_i x_i.
\end{equation}

\needspace{3\baselineskip}
\paragraph*{The number (bi-) partitioning problem} 
Given a positive integer $n$ and a positive $n$-vector $\xvec=(x_1,\ldots,x_n)$ with $\sum_i x_i =1$, let
\[ \lambda(\xvec) = \max_{A \subseteq [n]} \, \min \, \{ \sum_{i \in A} x_i, 1- \sum_{i \in A} x_i \}.\]
Thus $0 \leq \lambda(\xvec) \leq \frac12$.  Determining $\lambda(\xvec)$ is the \emph{number partitioning problem}, one of Garey and Johnson's six basic NP-hard problems. It is well known that if each $x_i \leq \frac13$ then $\lambda(\xvec) \geq \frac13$.  (To see this, observe that as we increase $j$ the last partial sum $x_1 + x_2 + \ldots + x_j$ which is at most $\frac23$ is at least $\frac13$.  This result also follows immediately from~(\ref{eqn.gamma1}) below.) 
  
We might expect that $\lambda(\xvec)$ is large when $\sum_ix_i^2$ is small, that is when $1\!-\!\sum_ix_i^2$ is large. We would like to find the largest constant $\alpha$ such that always $\lambda(\xvec) \geq \alpha (1\!-\!\sum_ix_i^2)$.  We must have $\alpha \leq \frac12$: for example if $n>1$ is odd and each $x_i=1/n$, then $\lambda(\xvec)= (n-1)/2n = \frac12(1-1/n) = \frac12(1-\sum_ix_i^2)$.  We shall show that $\frac12$ is the right answer for $\alpha$, that is, we always have $\lambda(\xvec) \geq \frac12(1-\sum_ix_i^2)$; and further there is a simple greedy bi-partitioning algorithm which achieves this.

Assume that the elements are ordered so that $x_1 \geq x_2 \geq \cdots \geq x_n>0$.  We have two bins, and we add the elements one by one to a smaller of the bins.  In detail, the greedy bi-partitioning algorithm is as follows.   Initially set $A=B=\emptyset$.  For $j=1,\ldots,n$, if $\sum_{i \in A} x_i  \leq \sum_{i \in B} x_i $ then insert $j$ into $A$, else insert $j$ into $B$.  At the end, let $\gamma(\xvec)= \min\{\sum_{i \in A} x_i , \sum_{i \in B} x_i\}$. The algorithm clearly uses at most $n$ comparisons and $n$ additions.
\begin{lemma} \label{lem.gamma}
\begin{equation} \label{eqn.gamma}
\gamma(\xvec) \geq \tfrac12 (1-\sum_ix_i^2).
\end{equation}
\end{lemma}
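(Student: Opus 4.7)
The plan is to prove Lemma~\ref{lem.gamma} by induction on $n$, analysing the state just before the final element $x_n$ is inserted. The base case $n=1$ is immediate: the sole element $x_1=1$ is placed in one bin while the other stays empty, so $\gamma(\xvec)=0=\tfrac12(1-x_1^2)$.

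For the inductive step with $n\geq 2$, let $s_A',s_B'$ denote the bin sums after step $n-1$. By symmetry I may assume $s_A'\leq s_B'$, so $s_A'+s_B'=1-x_n$ and the greedy rule places $x_n$ into $A$, producing final sums $s_A=s_A'+x_n$ and $s_B=s_B'$. Write $M=\sum_{i<n}x_i^2$. Applying the inductive hypothesis to the rescaled vector $(x_i/(1-x_n))_{i<n}$, which sums to $1$, remains weakly decreasing, and on which the greedy algorithm makes identical comparisons, and then rescaling back, yields the key lower bound
\[
s_A' \;\geq\; \frac{(1-x_n)^2-M}{2(1-x_n)}.
\]

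I then split into two cases according to whether inserting $x_n$ preserves or reverses the ordering. In Case~1, $s_A\leq s_B$, so $\gamma=s_A=s_A'+x_n$; combining with the displayed bound and simplifying gives $\gamma\geq \frac{1-x_n^2-M}{2(1-x_n)}$, and since $M\leq \sum_{i<n}x_i=1-x_n\leq 1-x_n^2$ this quantity is at least $\tfrac12(1-x_n^2-M)=\tfrac12(1-\sum_i x_i^2)$, as required. In Case~2, $s_A>s_B$, so $\gamma=s_B=s_B'=1-x_n-s_A'$; here I bypass the inductive hypothesis entirely and just use $s_A'\leq (1-x_n)/2$ to get $\gamma\geq (1-x_n)/2$, which beats $\tfrac12(1-M-x_n^2)$ precisely when $M\geq x_n(1-x_n)$. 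This last inequality is exactly where the sorting assumption $x_i\geq x_n$ for $i<n$ enters:
\[
M \;=\; \sum_{i<n}x_i^2 \;\geq\; x_n\sum_{i<n}x_i \;=\; x_n(1-x_n).
\]

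The main subtlety is choosing the right division into cases so that the inductive hypothesis is invoked only when needed (Case~1), while Case~2 is handled directly from the structural fact that before step $n$ the smaller bin never exceeds half of the mass placed so far. The sorting assumption is used essentially in Case~2 and nowhere else; the rest is routine algebra together with the rescaling trick that lets me apply the inductive hypothesis to a probability-type vector.
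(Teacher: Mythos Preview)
Your proof is correct and follows essentially the same approach as the paper's: induction on $n$, splitting according to whether the bin receiving $x_n$ ends up larger or smaller, applying the inductive hypothesis to the rescaled prefix vector in one case, and using the direct bound $\gamma\geq\tfrac12(1-x_n)$ together with $\sum_i x_i^2\geq x_n$ (equivalently your $M\geq x_n(1-x_n)$) in the other. The paper merely presents the two cases in the opposite order and phrases the second inequality via the observation $s\min_i x_i\leq\sum_i x_i^2$ rather than your explicit use of $x_i\geq x_n$; the algebra is otherwise identical.
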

Note by~\eqref{eqn.obs} we have $\sum_ix_i^2 \leq x_1$, so~(\ref{eqn.gamma}) gives 
\begin{equation} \label{eqn.gamma1}
\gamma(\xvec) \geq \tfrac12(1-x_1).
\end{equation}

\begin{proof}[Proof of Lemma~\ref{lem.gamma}]
When the algorithm has finished, let $T$ be the total of the values $x_i$ in the bin containing $x_n$; and let $\bar{T}=1-T$, the total in the other bin. 
We shall use induction on $n$. The result is trivial if $n=1$, since both sides of~(\ref{eqn.gamma}) are 0.  Let $n \geq 2$ and assume that the result holds for all inputs of length $n-1$.  We shall consider two cases. 

\begin{itemize}  
    \item[(a)]  Suppose that $T \geq \frac12$, so $T \geq \bar{T}$.  Then $T - \bar{T} \leq x_n$, so   $\gamma(\xvec) = \bar{T} \geq \frac12 (1-x_n)$. But  $\sum_i x_i^2 \geq x_n$ by~\eqref{eqn.obs}, so~(\ref{eqn.gamma}) holds (without using the induction hypothesis).

    \item[(b)] Suppose that $T < \frac12$, so $\gamma(\xvec) = T < \bar{T}$.
    Let $x= x_n$ and $s = \sum_{i=1}^n x_i^2$,  so $0<x, s <1$.
    Let $\yvec=(y_1,\ldots,y_{n-1})$ where $y_i=x_i/(1-x)$ (and note that $\sum_i y_i=1$).  By the induction hypothesis, 
    \[\gamma(\yvec) \geq \tfrac12 (1-\sum_i y_i^2) = \tfrac12 (1- \tfrac{s-x^2}{(1-x)^2}).\] 
    But $\gamma(\xvec) = x+ (1-x) \gamma(\yvec)$. Hence
\begin{eqnarray*}
	\gamma(\xvec)
	& \geq & 	x + \tfrac12 (1-x)(1- \tfrac{s-x^2}{(1-x)^2}) \;\; = \;\; \tfrac12 (1+x - \tfrac{s-x^2}{1-x})\\
	& = & 		\tfrac12 \tfrac{1-x^2-s+x^2}{1-x} \;\; = \;\; \tfrac12 \tfrac{1-s}{1-x} \;\;> \;\; \tfrac12 (1-s).
    \end{eqnarray*}
Hence, again~(\ref{eqn.gamma}) holds. \end{itemize}
Now we have established the induction step, and thus proven the lemma.\end{proof}
Next we consider partitioning numbers into several parts.
 Let $n \geq 1$ and let $\xvec = (x_1,\ldots,x_n)$ with each $x_i>0$ and $\sum_ix_i=1$.  Let $k \geq 1$ and let $\cA=(A_1,\ldots,A_k)$ be a partition of $[n]$. 
Let $S_j=\sum_{i\in A_j} x_i$ for each $j=1,\ldots,k$.
The corresponding \emph{cost} for $\cA$ and $\xvec$ is
\[ c(\cA,\xvec) = \sum_{j\in [k]} {S_j}^2 - \sum_{i \in [n]} x_i^2. \]
Observe that $0 \leq c(\cA,\xvec) <1$.
Given $0<\eta \leq 1$, we say that the partition $\cA$ is $(\xvec,\eta)$-\emph{fat} if $S_j \geq \eta$ for each~$j$.  Observe that the trivial partition is always $(\xvec,\eta)$-fat.

The number greedy partitioning algorithm starts with the trivial partition.  While there is a part $A$ such that, setting $S = \sum_{i \in A} x_i$ and $\yvec=(x_i/S :i \in A)$ we have $\gamma(\yvec) \geq \eta/S$, it picks the first such part $A$ and uses the greedy number bi-partitioning algorithm to split $A$ into two parts each with sum at least $\eta$.

\begin{lemma} \label{lem.2etax}
Let $n \geq 1$ and let $\xvec = (x_1,\ldots,x_n)$ with each $x_i>0$ and $\sum_ix_i=1$; and let $0<\eta \leq 1$.  Then the number greedy partitioning algorithm finds an $(\xvec,\eta)$-fat partition $\cA$ of~$[n]$ with $c(\cA,\xvec) < 2 \eta$, using $O(n)$ operations.
\end{lemma}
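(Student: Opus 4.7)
The plan is to track three properties across the execution of the greedy partitioning algorithm: the $\eta$-fatness invariant is maintained, the algorithm terminates after $O(1/\eta)$ splits, and the final cost satisfies $c(\cA,\xvec)<2\eta$. The argument hinges on applying Lemma~\ref{lem.gamma} to each surviving part at termination, where the stopping condition precisely matches the inequality in Lemma~\ref{lem.gamma}.

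First, I would verify by induction on the number of splits that every current part has $\xvec$-sum at least $\eta$. The initial trivial partition has a unique part of sum $1 \geq \eta$. Suppose the invariant holds, and suppose the algorithm splits a part $A$ with sum $S$ because $\gamma(\yvec) \geq \eta/S$ for $\yvec = (x_i/S : i \in A)$. By definition of $\gamma$, the greedy bi-partitioning of $\yvec$ produces two subparts whose (rescaled) sums are $\gamma(\yvec)$ and $1-\gamma(\yvec) \geq \gamma(\yvec) \geq \eta/S$; scaling back by $S$, each of the two new parts of $A$ has $\xvec$-sum at least $\eta$. So the invariant persists, and the output partition $\cA$ is $(\xvec,\eta)$-fat. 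Termination is immediate: each split strictly increases the number of parts, and the invariant caps this number at $\lfloor 1/\eta\rfloor$.

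The heart of the proof is the cost bound. When the algorithm stops, every current part $A$ with sum $S$ fails the splitting criterion, i.e.\ $\gamma(\yvec) < \eta/S$ with $\yvec=(x_i/S:i\in A)$. Applying Lemma~\ref{lem.gamma} to $\yvec$ and noting $\sum_{i\in A} y_i^2 = S^{-2}\sum_{i\in A} x_i^2$ gives
\[ \tfrac12\Bigl(1 - S^{-2}\sum_{i\in A} x_i^2\Bigr) \;\leq\; \gamma(\yvec) \;<\; \eta/S, \]
which rearranges to $S^2 - \sum_{i\in A} x_i^2 < 2\eta S$. Summing over the parts $A_1,\ldots,A_k$ with sums $S_1,\ldots,S_k$ yields
\[ c(\cA,\xvec) \;=\; \sum_j S_j^2 - \sum_i x_i^2 \;<\; 2\eta \sum_j S_j \;=\; 2\eta,\]
which is the claimed bound.

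For the $O(n)$ operation count, I would presort $\xvec$ in decreasing order once so that each part inherits a sorted list of its elements, and then argue that each split of a part of size $k$ runs greedy bi-partitioning in $O(k)$ time (plus an $O(k)$ pass to recompute $\gamma$ and update the data structures for the two new parts). Since there are at most $\lfloor 1/\eta\rfloor$ parts ever in play and each element participates in at most $\lfloor 1/\eta\rfloor$ splits, the total work is $O(n)$ with a constant depending on $\eta$. The main obstacle here is purely bookkeeping; the mathematical content is the pleasing observation that the stopping criterion $\gamma(\yvec)<\eta/S$ and Lemma~\ref{lem.gamma} combine locally to give an inequality which telescopes globally to exactly $2\eta$.
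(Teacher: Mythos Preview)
Your proof is correct and follows essentially the same route as the paper: apply Lemma~\ref{lem.gamma} to each surviving part at termination, combine with the stopping criterion $\gamma(\yvec)<\eta/S$ to get $S^2-\sum_{i\in A}x_i^2<2\eta S$, and sum over parts. You are in fact more thorough than the paper, which proves only the cost bound and leaves the fatness invariant, termination, and the $O(n)$ operation count implicit; one small quibble is that your presort step is $O(n\log n)$ rather than $O(n)$, though the paper does not justify the linear bound either.
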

\begin{proof}
Let the final partition obtained by the greedy  amalgamating algorithm be $\cA =(A_1,\ldots,A_k)$ (for some $k \geq 1$).
Fix $j \in [k]$.  Let $\xvec^{(j)}=(x_i/S_j: i \in A_j)$.  Let $c_j$ denote the cost of the trivial partition of $A_j$ with $\xvec^{(j)}$, so $c_j = 1-\sum_{i \in A_j} (x_i/S_j)^2 \geq 0$. 
By Lemma~\ref{lem.gamma},
$\gamma(\xvec^{(j)}) \geq \tfrac12 (1-\sum_{i \in A_j} (x_i/S_j)^2) = \tfrac12 c_j$.  But since the algorithm stopped with $\cA$, we have $\gamma(\xvec^{(j)}) < \eta/S_j$.  Hence $c_j < 2\eta/S_j$ for each $j \in [k]$, and so
\begin{eqnarray*}
   c(\cA,\xvec)
&=&
   \sum_{j\in [k]} ({S_j}^2 -\!\sum_{i \in A_j} x_i^2) \; = \; \sum_{j\in [k]} {S_j}^2(1 -\!\sum_{i \in A_j} (x_i/S_j)^2)\\
&=&
   \sum_{j\in [k]} {S_j}^2 c_j \; < \sum_{j\in [k]} {S_j}^2 (2 \eta/S_j) \; = \; 2 \eta,
\end{eqnarray*}
as required. \end{proof}

Finally, let us deduce the fattening lemma, Lemma~\ref{lem.nosmall2}, from Lemma~\ref{lem.2etax}.  The greedy amalgamating algorithm which we apply to the partition $\cB =(B_1,\ldots,B_n)$ of $V(G)$ is essentially the number greedy partitioning algorithm applied to the partition of $[n]$ into singletons $\{i\}$, where the weight of $i$ is $\vol(B_i)/\vol(G)$.

\begin{proof}[Proof of Lemma~\ref{lem.nosmall2}]
Let $\cB=(B_1,\ldots,B_n)$ be a partition of $V(G)$; and let $\xvec=(x_1,\ldots,x_n)$ where $x_i=\vol(B_i)/\vol(G)$.  By Lemma~\ref{lem.2etax} the number greedy partitioning algorithm finds an $(\xvec,\eta)$-fat partition $\cA=(A_1,\ldots,A_k)$ of $[n]$ with $c(\cA,\xvec) < 2 \eta$.  Now $\cA$ yields the partition $\tilde{\cA}=(\tilde{A}_1,\ldots, \tilde{A}_k)$ of $V(G)$, where $\tilde{A}_j= \cup_{i \in A_j} B_i$; and $\vol(\tilde{A}_j) = (\sum_{i \in A_j} x_i)\, \vol(G)$, so $\tilde{\cA}$ is $\eta$-fat for $G$.
Also $q_{\tilde{\cA}}^E(G) \geq q_{\cB}^E(G)$, and
$q_{\tilde{\cA}}^D(G) - q_{\cB}^D(G) = c(\cA,\xvec)$. Thus
\begin{eqnarray*}
 q_{\tilde{\cA}}(G) & \geq &
 q_{\cB}^E(G) - q_{\tilde{\cA}}^D(G) \; = \; q_{\cB}^E(G) - (c(\cA,\xvec) + q_{\cB}^D(G))\\
& = &
q_{\cB}(G) - c(\cA,\xvec) \; > \; q_{\cB}(G) - 2 \eta.
\end{eqnarray*}
This completes the proof of  Lemma~\ref{lem.nosmall2}.
\end{proof}

%%%%%%%%%%%%%%%%%%%%%%%%%%%%%%%%%%%%
%%%%%%%%%%%%%%%%%%%%%%%%%%%%%%%%%%%%

\needspace{6\baselineskip}
\section{Modularity of $G_p$ with large expected number of edges}\label{sec.proofthm1}
\subsection{Proof of Theorem~\ref{thm.obsmod}}\label{subsec.proofthm1}
We shall use a tail bound for random variables with a binomial or similar distribution.  We use a variant of the Chernoff bounds, which follows for example from Theorems~2.1 and 2.8 of \cite{JLRbook} by considering $S/b$. 
In this and the next section we shall always have $b=1$ or $b=2\,$: we will need general $b$ in Section~\ref{sec.wmod_new}.
\begin{lemma} \label{lem.conc}
Let $b>0$, and let random variables $X_1, \ldots,X_k$ be independent, with $0 \leq  X_j \leq b$ for each $j$. Let $S = \sum_{j=1}^{k} X_j$ and $\mu = \E(S)$. Then 
\[ 
\pr( |S - \mu| \geq \eps \mu) \leq 2 e^{- \tfrac13  \eps^2 \mu/b} \;\;\; \mbox{ for } 0 < \eps \leq 1; \]
or equivalently
\[ \pr(|S-\mu| \geq x \sqrt{\mu}) \leq 2 e^{-\frac13 x^2/b}  \;\;\; \mbox{ for } 0 < x \leq \sqrt{\mu}\,.
\]
\end{lemma}

\begin{proof}[Proof of Theorem~\ref{thm.obsmod}]  First note that we may assume that $0<\eps<1$ and $\q(G)\geq \eps$. Let $\eta= \eps/4$. 
By Lemma~\ref{lem.nosmall2} there is an $\eta$-fat partition $\cA=(A_1,\ldots,A_k)$ for $G$ such that $q_{\cA}(G) \geq \q(G)- 2 \eta$ (thus $q_{\cA}(G)\geq 2\eta$ by our choice of $\eta=\eps/4$).
Observe that the number $k$ of parts in $\cA$ is at most $\eta^{-1}$, and thus $q_{\cA}^D(G) \geq \eta$ by~(\ref{eqn.obs}).
To prove Theorem~\ref{thm.obsmod} we consider first the edge contribution and then the degree tax. Let $t = (e(G)p)^{1/2}$. By Lemma~\ref{lem.conc} (with $b=1$), for $0 \leq x \leq t$ 
\begin{equation} \label{eqn.edge}
\pr( |e(G_p) - e(G)p |  \geq e(G) p \cdot x/t) \leq 2 e^{-x^2 /3}\,.
\end{equation}
For a graph $H$ on $V(G)$, let $\eint(H)$ denote the number of `internal' edges of $H$ within the parts of~$\cA$, so that $q_{\cA}^E(H)=\eint(H)/e(H)$.   Then by Lemma~\ref{lem.conc} as above, for $0 < x \leq (\eint(G)p)^{1/2}$
\[ \pr( \eint(G_p) \leq \eint(G)p\, (1 - x \,({\eint(G)p})^{-1/2}) \leq 2 e^{-x^2/3}\,.\]
But
\[ q_{\cA}^E(G) > \q(G)- 2 \eta + q_{\cA}^D(G) \geq \q(G) - \eta \geq 3 \eta,\]
so $\eint(G) \geq 3\eta\, e(G)$. Hence, for $0<x \leq (3\eta)^{1/2} t$,
with probability at least $1- 2e^{-x^2/3}$
\begin{equation} \label{eqn.int}
\eint(G_p) \geq \eint(G) \, p \, (1-x ( 3 \eta\, e(G)p)^{-1/2}) = \eint(G) \, p \, (1- (3 \eta)^{-1/2}x/t).
\end{equation}
Also, by~(\ref{eqn.edge}), for $0<x \leq t$ we have $e(G_p) \leq e(G)p \, (1 + x/t)$ with probability at least $1-2e^{-x^2/3}$.
Hence by~\eqref{eqn.int}, for $0<x \leq (3\eta)^{1/2} t$, with probability at least $1- 4e^{-x^2/3}$
\begin{equation} \label{eqn.qE}
  q_{\cA}^E(G_p) = \frac{\eint(G_p)}{e(G_p)} \geq q_{\cA}^E(G) \frac{1-(3 \eta)^{-1/2}x/t}{1+x/t}.
\end{equation}
The degree tax is only a little more complicated. 
Let $X_i= \vol_{G_p}(A_i)$, so $\E[X_i]= \vol_G(A_i)p$.
By Lemma~\ref{lem.conc} with $b=2$, for $0 < x \leq (\vol_G(A_i)p)^{1/2}$, with probability at least $1- 2e^{- x^2 /6}$ we have
\[ X_i \leq \vol_G(A_i)p \, \big(1+ x \,\big({\vol_G(A_i)p}\big)^{-1/2}\big)
\,.\]
But, since $\vol_G(A_i) \geq \eta \, \vol(G)$,
\[ \big(\vol_G(A_i)p\big)^{-1/2} \leq \big(\eta\, \vol(G)p \big)^{-1/2} = (2\eta)^{-1/2}/t;\]
and so, for $0 < x \leq (2\eta)^{1/2} t$, with probability at least $1-2e^{-x^2 /6}$ we have
\[ X_i \leq \vol_G(A_i)p \, \big(1+ (2 \eta)^{-1/2} x/t \big)\,. \]
Recall that $\cA$ has $k$ parts and thus for $0 < x \leq (2\eta)^{1/2} t$, with probability at least $1- 2k e^{-x^2 /6}$
\[ \sum_i X_i^2 \leq \sum_i \vol_G(A_i)^2 p^2 \, \big( 1+  (2 \eta)^{-1/2} x/t \big)^2.\]
Also, by~(\ref{eqn.edge}), for $0<x \leq t$, with probability at least $1- 2e^{-x^2/3}$ we have $e(G_p) \geq e(G)p (1 - x/t)$ and so $\vol(G_p)^2 \geq \vol(G)^2 p^2 (1 - x/t)^2$. Hence, for $0 < x \leq (2\eta)^{1/2} t$, with probability at least $1- 2(k+1) e^{-x^2 /6}$
\begin{equation} \label{eqn.qD}
 q_{\cA}^D(G_p) \leq  q_{\cA}^D(G) \left( \frac{1+ (2 \eta)^{-1/2} x/t}  {1 - x/t} \right)^{2}.
 \end{equation}
Now, for $0 < x \leq (2\eta)^{1/2} t$, with probability at least $1- 2(k+3) e^{-x^2 /6}$, both~(\ref{eqn.qE}) and~(\ref{eqn.qD}) hold. (With a little more care we could replace the factor $(k+3)$ here by $(k+2)$ but that would not make a significant difference.)
Since $k \leq\eta^{-1} = 4/\eps$, the failure probability here is at most $2 (4/\eps +3) e^{-x^2 /6}$.  
Choose $x=x(\eps)$ sufficiently large that this probability is at most~$\eps$; and note that we may take $x= \Theta(\sqrt{\log \eps^{-1}})$.  In the next paragraph we shall choose $t_0 \geq (2 \eta)^{-1/2} x$, so~(\ref{eqn.qD}) holds for our choice of $x$ and any $t \geq t_0$.

Consider the ratios on the right sides of the inequalities~(\ref{eqn.qE}) and~(\ref{eqn.qD}).  Choose $t_0$ sufficiently large that for $t \geq t_0$
\begin{equation} \label{eqn.geq}
 \frac{1-(3 \eta)^{-1/2}x/t}{1+x/t} \geq 1-\eta
\end{equation}
and
\begin{equation} \label{eqn.leq}
  \left( \frac{1+ (2 \eta)^{-1/2} x/t}  {1 - x/t} \right)^{2} \leq 1+\eta.
\end{equation}
Note that the left side in~(\ref{eqn.geq}) is increasing in $t$, and the left side in~(\ref{eqn.leq}) is decreasing in $t$, so we just need the inequalities to hold for $t_0$. Rearranging, we see the inequality~\eqref{eqn.geq} is equivalent to
\[ t\geq x(3^{-1/2}\eta^{-3/2}+\eta^{-1}-1) .\]
and thus we must have $t_0\geq 3^{-1/2}\eta^{-3/2}x$.
In fact we may take $t_0=\Theta( \eta^{-3/2} x)=\Theta(\eps^{-3/2} \sqrt{\log \eps^{-1}} )$ and this will ensure both~\eqref{eqn.geq} and~\eqref{eqn.leq} hold.

Finally, set $c=t_0^2$, so $c= \Theta(\eps^{-3} \log \eps^{-1})$; and let us check that this value for $c$ works.  With probability at least $1-\eps$ both~(\ref{eqn.qE}) and~(\ref{eqn.qD}) hold.
Suppose that both~(\ref{eqn.qE}) and~(\ref{eqn.qD}) do hold: if also $e(G)p \geq c$ then $t \geq t_0$, so~(\ref{eqn.geq}) and~(\ref{eqn.leq}) hold; and then
\begin{equation}\label{eq.thm1prooflastline} q_{\cA}(G_p) \geq q_{\cA}^E(G)(1-\eta) -  q_{\cA}^D(G)(1+\eta) > q_{\cA}(G) - 2 \eta \geq \q(G) - \eps.\end{equation}
Hence, if $e(G)p \geq c$, then $q_{\cA}(G_p) > \q(G)-\eps$  with probability at least $1-\eps$, as required. This completes the proof of Theorem~\ref{thm.obsmod}.
\label{end.proofthm1}\end{proof}
\smallskip
In the above proof of Theorem~\ref{thm.obsmod} we took $c= \Theta(\eps^{-3} \log \eps^{-1})$.
If we used a more detailed and precise form of Lemma~\ref{lem.conc} - see for example Theorem 21.6 of~\cite{frieze2015book} - we could improve several bounds in the proof but we would not improve the asymptotic estimate for $c$. If we simply used Chebyshev's inequality we find we can take $c = \Theta(\eps^{-5})$.

The following example shows that, for the conclusion of Theorem~\ref{thm.obsmod} to hold, $c(\eps)$ must be at least $\Omega(\eps^{-1} \log \eps^{-1})$ as $\eps \to 0$.

\begin{example}\label{ex.obsmodconstant}
Let $\eps>0$ be small.  Let the integer $c = c(\eps)$ be sufficiently large that the conclusion of Theorem~\ref{thm.obsmod} holds.
Let $m = 2c$ and $k= \lceil \eps m \rceil$, and assume that $k+1<m$. Let $G$ be the $m$-edge graph consisting of a star with $m-k$ edges together with $k$ isolated edges.  Then the connected components partition is the optimal partition for~$G$, since each component has modularity~0~\cite{modexpansion}. Thus 
\[ \q(G) = 1- \tfrac{(m-k)^2 +k}{m^2} = \tfrac{2k}{m} - \tfrac{k^2+k}{m^2} > \tfrac{k}{m} \geq \eps. \]
Now let $p=\frac12$, and note that $e(G)p = mp \geq c$.  The probability that $G_p$ contains none of the $k$ isolated edges is $(1-p)^k > 2^{-\eps m-1}$.  But any star has modularity 0, so
$ \pr(\q(G_p)=0) > 2^{-\eps m -1}$.  Thus we must have
$2^{-\eps m} < 2 \eps$, and so $c > \frac12 \eps^{-1} \log_2 (2\eps)^{-1}$.\end{example}

\needspace{3\baselineskip}
\subsection{A $k$-part analogue to Theorem~\ref{thm.obsmod}}
\label{subsec.obsmod_k}

Recall that $\qk(G) = \max_{|\cA| \leq k} q_\cA(G)$, the maximum value of $q_\cA(G)$ over all vertex partitions $\cA$ with at most $k$ parts. We note here a variant of Theorem~\ref{thm.obsmod} where we replace each instance of $\q$ by $\qk$. (Observe that the inequality~\eqref{eq.DinhThai} of Dinh and Thai with large $k$ shows that Proposition~\ref{prop.obsmod_k} implies Theorem~\ref{thm.obsmod}. A similar connection will hold for Proposition~\ref{prop.moddiff_k} and Theorem~\ref{thm.moddiff}.)

\begin{proposition}\label{prop.obsmod_k}
Given $\eps>0$ there exists $c=c(\eps)$ such that the following holds. For each graph~$G$ and probability $p$ such that $e(G)p \geq c$, and for each $k\geq 2$, the random graph $G_p$ satisfies $\qk(G_p) > \qk(G)-\eps$ with probability $\geq 1-\eps$.
\end{proposition}

\begin{proof}
The proof is almost immediate following that of Theorem~\ref{thm.obsmod}. As in that proof, first note we may assume that $0 < \eps < 1$ and $\qk(G)\geq \eps$. Let $\cA$ be a partition with at most $k$ parts and such that $q_{\cA}(G)=\qk(G)$. Let $\eta=\eps/4$ and then by Lemma~\ref{lem.nosmall2} there is an $\eta$-fat partition $\cA'$ such that $q_{\cA'}(G)\geq q_{\cA}(G) - 2\eta $ and such that $\cA$ is a refinement of $\cA'$. Note in particular this means that~$|\cA'|\leq |\cA|\leq k$ and so $\cA'$ has at most $k$ parts. 

Then we may proceed exactly as in the proof of Theorem~\ref{thm.obsmod} and the analogue of line \eqref{eq.thm1prooflastline} would say that then 
with probability at least $1-\eps$ we have
\begin{equation}\notag q_{\cA'}(G_p) > q_\cA(G) - \eps.\end{equation}
But now, since $q_\cA(G)=\qk(G)$ and since $|\cA'|\leq k$ this implies that with probability at least $1-\eps$ we have $\qk(G_p) \geq q_{\cA'}(G_p) > \qk(G)-\eps$ as required.
\end{proof}

%%%%%%%%%%%%%%%%%%%%%%%%%%%%%%
%%%%%%%%%%%%%%%%%%%%%%%%%%%%%%
\needspace{6\baselineskip}
\section{Modularity of $G_p$ with large expected degree}\label{sec.proofthm2}

In this section we first prove Theorem~\ref{thm.moddiff}, and then we give a $k$-part analogue Proposition~\ref{prop.moddiff_k} and use it to complete the discussion of the bootstrapping example from Section~\ref{subsec.bootstrap}.

\subsection{Proof of Theorem~\ref{thm.moddiff}}\label{subsec.proofofthm2}
The rough idea of the proof of Theorem~\ref{thm.moddiff} is that we can use the fattening lemma, Lemma~\ref{lem.nosmall2}, to bound the probability that a vertex partition behaves badly by the probability that a fat vertex partition behaves similarly badly, and we can use probabilistic methods to handle fat partitions. However, even after the streamlining provided by Lemma~\ref{lem.nosmall2} the proof is quite intricate and we will need to define a large number of events, many of which are `bad events' parameterised by deviation from the ideal case.

\bigskip

\begin{proof}[Proof of Theorem~\ref{thm.moddiff}]
Let $\eps>0$. We shall choose $c=c(\eps)$ sufficiently large that certain inequalities below hold. It will suffice to choose $c > K \eps^{-3} \log \eps^{-1}$ for a sufficiently large constant $K$.
Let $G=(V,E)$ be a (fixed) $n$-vertex graph and let $0<p<1$; and assume that $e(G)p \geq cn$.

We now define the `bad' events $\cB_0$, $\cB_1$, $\cB_2$ and $\cE_0$ (there will be more later!). 
Let $\cB_0$ be the event $\{\q(G_p) < \q(G) -\eps \}$. Let $\eta= \tfrac{\eps}{9}$. Let $\cE_0$ be the event that there is a partition $\cA_0$ such that $q_{\cA'_0}(G) < q_{\cA_0}(G_p) - \eps$, where $\cA'_0=\cA(G_p,\eta,\cA_0)$ as in Lemma~\ref{lem.nosmall2}. 
Let $\cQ$ be the set of partitions which are $\frac{\eta}2$-fat for $G$.
Let $\cB_1$ be the event that for some $A \subseteq V$ with $\vol_{G}(A) < \frac{\eta}2 \vol(G)$ we have $\vol_{G_p}(A) \geq \eta \vol(G_p)$ (that is, $A$ is not $\eta/2$-fat for $G$ but is $\eta$-fat for $G_p$).
Observe that if some partition $\cA \not\in \cQ$ is $\eta$-fat for $G_p$, then $\cB_1$ holds.
Let $\cB_2$ be the event that there is a partition $\cA \in \cQ$ such that $q_{\cA}(G_p) > q_{\cA}(G)+\tfrac{\eps}{2}$.

We shall show that \begin{equation} \label{eqn.a}
 \{ \q(G_p) > \q(G) + \eps \} \subseteq \cB_1 \cup \cB_2
\end{equation}
and
\begin{equation} \label{eqn.b}
 \cE_0 \subseteq \cB_1 \cup \cB_2.
\end{equation}
Note that (\ref{eqn.a}) yields $\{|\q(G_p) - \q(G)| > \eps\} \subseteq \cB_0 \cup \cB_1 \cup \cB_2$. It will follow that the probability that (a) or (b) in the theorem fails is at most  $\pr(\cB_0) + \pr(\cB_1) + \pr(\cB_2)$.
By Theorem \ref{thm.obsmod}, if $c n$ is sufficiently large (depending only on $\eps$) then $\pr(\cB_0) < \eps/3$. We shall show that, if we choose $c$ sufficiently large (depending only on $\eps$) then also 
\begin{equation} \label{eqn.B1}
\pr(\cB_1) < \eps/3 
\end{equation}
and
\begin{equation} \label{eqn.B2}
 \pr(\cB_2) < \eps/3  
\end{equation}
which will complete the proof of Theorem~\ref{thm.moddiff}.  Next come the details: the proofs of \eqref{eqn.a}-\eqref{eqn.B2}.
We focus initially on part (a) and the containment~(\ref{eqn.a}).  

\begin{proof}[Proof of~(\ref{eqn.a})]
By Lemma~\ref{lem.nosmall2} (and since $2 \eta \leq \eps/2$),
\begin{eqnarray*}
\{ \q(G_p) > \q(G)+\eps\}
& \subseteq &
  \{\exists\, \cA\: \eta\!-\!\mbox{fat for} \, G_p : q_{\cA}(G_p) > \q(G)+\tfrac{\eps}{2}\} \\
& \subseteq &
  \{\exists\, \cA\: \eta\!-\!\mbox{fat for} \, G_p : q_{\cA}(G_p) > q_{\cA}(G)+\tfrac{\eps}{2}\} \\
& \subseteq &
\{ \exists\, \cA \not\in \cQ : \cA \mbox{ is } \eta\!-\!\mbox{fat for} \, G_p \} \cup \cB_2\\
& \subseteq & \cB_1 \cup \cB_2.\end{eqnarray*}
Thus the containment~(\ref{eqn.a}) holds.
\end{proof}

We show next that the inequality~(\ref{eqn.B1}) holds.  Let us observe first that if $e(G)p/v(G) \geq c$ for some (large) $c$ then $v(G) \geq 2c\,$; so we are concerned only with large graphs.

\begin{proof}[Proof of~(\ref{eqn.B1})]
Let $A \subseteq V$ have $\vol_{G}(A) <  \frac{\eta}2 \vol(G)$.  Suppose that the edges within $A$ are $e_1,\ldots,e_j$ and the edges with exactly one end in $A$ are $e_{j+1},\ldots,e_k$.
For $i=1,\ldots,j$ let $X_i$ be 2 if $e_i$ is in $G_p$ and 0 otherwise; and for $i=j+1,\ldots,k$ let $X_i$ be 1 if $e_i$ is in $G_p$ and 0 otherwise.  Let $S = \sum_{i=1}^{k} X_i$, and let $\mu = \E(S)$. Then $\vol_{G_p}(A) = S$ and $\mu = \vol_G(A)p  < \frac{\eta}2 \vol(G) p$. Form $S'$ by adding further independent random variables $X_i$ with $0 \leq X_i \leq 1$ so that $\mu' := \E[S'] = \frac{\eta}2 \vol(G) p$.
Now by Lemma~\ref{lem.conc} applied to $S'$ (with $b=2$)
\begin{eqnarray*}
\pr( \vol_{G_p}(A) \geq \tfrac23 \eta \, \vol(G) p) 
& = &
 \pr( S \geq \tfrac23 \eta \, \vol(G) p)\\
& \leq &
\pr( S' \geq \tfrac23 \eta \, \vol(G) p)
\; = \;
\pr(S' \geq \tfrac43 \mu')\\
& \leq & 
2 e^{-\frac1{54} \mu' } \; = \; 2 e^{-\frac{\eta}{108} \vol(G) p } \; \leq \; 2 e^{-\frac{\eta}{54 cn }}.
\end{eqnarray*}

Let $\cB_3$ be the (bad) event that $\vol_{G_p}(A) \geq \tfrac23 \eta\, \vol(G)p\,$ for some $A \subseteq V$ with $\vol_G(A) < \frac12 \eta \vol(G)$.  Then, by a union bound, $\pr(\cB_3) \leq 2^n \cdot  2 e^{-\frac{\eta}{108} cn}$,  which is at most $\tfrac{\eps}6$ if $c \geq K/\eps$ for a sufficiently large constant $K$. By Lemma~\ref{lem.conc} (with $b=1$), $\pr(\vol(G_p) < \tfrac23 \vol(G)p) \leq \tfrac{\eps}6$ (if $cn$ is sufficiently large).
Then we have
\[  \pr(\cB_1) \leq \pr(\cB_3) + \pr(\vol(G_p) < \tfrac23 \vol(G)p) \leq \tfrac{\eps}3,\]
as required.
\end{proof}
We now start to prove that the inequality~(\ref{eqn.B2}) holds. The proof proceeds by considering first degree tax in Claim (\ref{eqn.E3}), and then edge contribution in Claim (\ref{eqn.eintsmall2}).

\bigskip

\noindent\emph{Degree tax} \:
Let $\cB_4$ be the event that for some $A \subseteq V$ with $\vol_{G}(A) \geq  \frac{\eta}2 \vol(G)$ we have $\vol_{G_p}(A) < (1-\tfrac{\eps}{9}) \vol_G(A)p$. 
We claim that 
\begin{equation}\label{eqn.E3}
 \pr(\cB_4) \leq \tfrac{\eps}{12}.\end{equation}

\begin{proof}[Proof of Claim~(\ref{eqn.E3})]
Let $A \subseteq V$ have $\vol_{G}(A) \geq  \eta\, \vol(G)$. Then $\vol_{G_p}(A)$ has mean $\vol_G(A)p \geq \tfrac{\eta}2 \vol(G)p \geq \tfrac{\eta}2 cn$.  Hence, by Lemma~\ref{lem.conc} (with $b=2$) and a union bound, we obtain~(\ref{eqn.E3}) if $c \geq K/\eps$ for a sufficiently large constant~$K$.
\end{proof}

Let $\cE_1$ be the event that $\vol(G_p) \leq \big((1\!-\!\tfrac{\eps}{9})/(1\!-\!\tfrac{\eps}{8})\big)\, \vol(G)p$.
By Lemma~\ref{lem.conc} (with $b=1$), $\pr(\overline{\cE}_1) < \tfrac{\eps}{12}$ (if $cn$ is sufficiently large).
Suppose that $\cE_1 \land \overline{\cB_4}$ holds: then for each partition $\cA=(A_1,\ldots,A_k) \in \cQ$
\begin{eqnarray*}
q^D_{\cA}(G_p) 
& \geq &
\frac{\sum_i  \big( (1-\tfrac{\eps}9) \vol_G(A_i) p \big)^2}{(((1 - \tfrac{\eps}9)/(1-\tfrac{\eps}8)) \vol(G)p)^2} \\
& = &
 \big(1- \tfrac{\eps}8 \big)^2  q^D_{\cA}(G) \; > \;
(1-\tfrac{\eps}4) q^D_{\cA}(G) \; \geq \; q^D_{\cA}(G) - \tfrac{\eps}4.
\end{eqnarray*}
Thus
\begin{equation} 
\label{eqn.qDbig}
 \pr(\exists\, \cA \in \cQ : q^D_{\cA}(G_p) \leq q^D_{\cA}(G) - \tfrac{\eps}4) \leq 
\pr(\overline{\cE_1}) + \pr(\cB_4) \leq \tfrac{\eps}6 . 
\end{equation}
\smallskip

\noindent\emph{Edge contribution} \:
Let $\cE_2$ be the event that for each partition $\cA \in \cQ$ we have $\eint(G_p) \leq \max \{ (1+\eta) \eint(G) p , \, \eta \, e(G)p \}$. 
We claim that (if $c$ is sufficiently large) 
\begin{equation} \label{eqn.eintsmall2}
\pr(\overline{\cE}_2) \leq \tfrac{\eps}{12}.\end{equation}
\begin{proof}[Proof of Claim~(\ref{eqn.eintsmall2})]
Each partition $\cA \in \cQ$ has at most $\frac2{\eta}$ parts, so
$|\cQ| \leq (1+ \frac2{\eta})^n$.
Let $\cB_5$ be the event that, for some partition $\cA \in \cQ$ such that $\eint(G) < \tfrac{\eta}2 e(G)$, we have $\eint(G_p) \geq \eta e(G)p$.  
For each given such partition $\cA$, $\eint(G_p)$ is stochastically at most a binomial random variable with mean $\tfrac{\eta}2 e(G)p$, so by Lemma~\ref{lem.conc} (with $b=1$) and a union bound we obtain $\pr(\cB_5) \leq \tfrac{\eps}{24}$, if $c > K \eps^{-1} \log \eps^{-1}$ for a sufficiently large constant $K$. 

Let $\cB_6$ be the event that, for some partition $\cA \in \cQ$ such that $\eint(G) \geq \tfrac{\eta}2 e(G)$, we have $\eint(G_p) > (1+ \eta) \eint(G)p$. For each given such partition $\cA$, $\eint(G_p)$ is a binomial random variable with mean at least $\tfrac{\eta}2 e(G)p \geq \tfrac{\eta}2 cn$, so by Lemma~\ref{lem.conc} (with $b=1$) and a union bound we obtain $\pr(\cB_6) \leq \tfrac{\eps}{24}$, if $c > K \eps^{-3} \log \eps^{-1}$ for a sufficiently large constant $K$. 

Now consider any partition $\cA \in \cQ$.  If $\cB_5$ fails and $\eint(G_p) \geq \eta e(G)p$ then $\eint(G) \geq \tfrac{\eta}2 e(G)$; and so, if also $\cB_6$ fails, then $\eint(G_p) \leq (1+ \eta) \eint(G)p$.  Thus if both $\cB_5$ and $\cB_6$ fail then $\cE_2$ holds, so
\[ \pr(\overline{\cE}_2) \leq \pr(\cB_5) + \pr(\cB_6)  \leq \tfrac{\eps}{12}, \]
so the Claim~(\ref{eqn.eintsmall2}) holds, as required.
\end{proof}

Having proved the claims (\ref{eqn.E3}) and~(\ref{eqn.eintsmall2}), we may now complete the proof of~(\ref{eqn.B2}).

\begin{proof}[Proof of~(\ref{eqn.B2})]
Let $\cE_3$ be the event that $e(G_p) \geq (1+ \eta)^{-1} e(G)p$.  By Lemma~\ref{lem.conc} (with $b=1$), $\pr(\overline{\cE}_3) \leq \tfrac{\eps}{12}$ if $c > K \eps^{-2} \log \eps^{-1}$ for a sufficiently large constant $K$. 
If $\cE_2$ and $\cE_3$ hold, then for each partition $\cA \in \cQ$ we have
\begin{eqnarray*} 
  q^E_{\cA}(G_p)
& \leq &
  \frac{\max \{(1+\eta) \eint(G) p, \, \eta \, e(G)p \}}{(1+\eta)^{-1} e(G)p}\\
& = &
  \max \{(1+\eta)^2 q^E_{\cA}(G),\, \eta+\eta^2 \}\\
& < &
  q^E_{\cA}(G) + 2 \eta + \eta^2 \;\; < \;\;  q^E_{\cA}(G) + \tfrac{\eps}4.
\end{eqnarray*}
Hence, using also~(\ref{eqn.eintsmall2}),
if $c > K \eps^{-3} \log \eps^{-1}$ for a sufficiently large constant~$K$,
\begin{equation} \label{eqn.qEsmall} \pr(\exists \cA \in \cQ : q^E_{\cA}(G_p) \geq q^E_{\cA}(G) + \tfrac{\eps}4) \leq \pr(\overline{\cE}_2) + \pr(\overline{\cE}_3)  \leq \tfrac{\eps}6. 
\end{equation}
But now, by~(\ref{eqn.qDbig}) and~(\ref{eqn.qEsmall}), 
\begin{eqnarray*}
  \pr(\cB_2)
&=&
   \pr(\exists \cA\!\in\!\cQ : q_{\cA}(G_p) > q_{\cA}(G)\!+\!\tfrac{\eps}{2})\\
& \leq &
  \pr(\exists \cA\!\in\!\cQ: q^D_{\cA}(G_p)\!<\!q^D_{\cA}(G)\!-\!\tfrac{\eps}4) +
\pr(\exists \cA\!\in\!\cQ : q^E_{\cA}(G_p)\!\geq\!q^E_{\cA}(G)\!+\!\tfrac{\eps}4)\leq \tfrac{\eps}3. 
\end{eqnarray*}
This completes the proof of~(\ref{eqn.B2}).  
\end{proof}
We have now completed the proofs of~(\ref{eqn.a}),~(\ref{eqn.B1}) and~(\ref{eqn.B2}); so it remains only to prove the containment~(\ref{eqn.b}). 
\begin{proof}[Proof of~(\ref{eqn.b})]
Recall from Lemma~\ref{lem.nosmall2} that $q_{\cA'_0}(G_p) \geq q_{\cA_0}(G_p) - 2 \eta$, where $\cA'_0=\cA(G_p,\eta,\cA_0)$.  Hence, if we let $\cE_4$ be the event that there is a partition $\cA_1$ which is $\eta$-fat for $G_p$ and satisfies 
$q_{\cA_1}(G) < q_{\cA_1}(G_p) - \eps + 2\eta$ then $\cE_0 \subseteq \cE_4$.  
Let $\cE_5$ be the event that there is a partition $\cA_2 \in Q$ (that is, $\cA_2$ is $\frac{\eta}{2}$-fat for $G$) which satisfies 
$q_{\cA_2}(G) < q_{\cA_2}(G_p) - \eps + 2\eta$.  Then $\cE_4 \subseteq \cB_1 \cup \cE_5$, by the definition of the event $\cB_1$. 
Also, by our choice of $\eta$, if $\cE_5$ holds then there exists $\cA \in Q$ such that $q_{\cA}(G) < q_{\cA}(G_p) - \frac{\eps}2$, i.e. $\cB_2$ holds; and thus $\cE_5 \subseteq \cB_2$.  
Summing up, we have $\cE_0 \subseteq \cE_4 \subseteq \cB_1 \cup {\cE_5} \subseteq \cB_1 \cup \cB_2\,$; so~(\ref{eqn.b}) holds, as required. \end{proof}\label{end.proofthm2}

We have now completed the proofs of \eqref{eqn.a}-\eqref{eqn.B2}, and thus completed the proof of Theorem~\ref{thm.moddiff}.\end{proof} 

In the proof of Theorem~\ref{thm.moddiff} we took $c(\eps)=\Theta(\eps^{-3} \log \eps^{-1})$. The following example shows that, for the conclusion (a) in Theorem~\ref{thm.moddiff} to hold, $c(\eps)$ must be at least $\Omega(\eps^{-2})$.

\begin{example}\label{ex.moddiffconstant}
Recall from~\cite[Theorem 1.3]{ERmod} that there is a constant $a>0$ such that, for each $c\geq 1$ whp $\q(G_{n, c/n}) >a/\sqrt{c}$.
Let $\eps >0$ and let $c = c(\eps) = a^2/\eps^2$ (so $\eps= a/\sqrt{c} $).  Let $n \geq c+1$, let $G$ be~$K_n$ (so $\q(G)=0$) and let $p=(c+1)/n$.  Then the expected average degree in $G_p$ is $(n-1)p \geq c$, but whp $\q(G_p) > \q(G)+ \eps$.\end{example}

\needspace{3\baselineskip}
\subsection{A $k$-part analogue to Theorem~\ref{thm.moddiff}}\label{subsec.moddiff_k}
As in Section~\ref{subsec.obsmod_k}, recall that $\qk(G)=\max_{|\cA|\leq k} q_\cA(G)$. We present here a variant of Theorem~\ref{thm.moddiff}(a) where we replace each instance of $\q$ by $\qk$. In Section~\ref{sec.further_and_relation} it was mentioned in the discussion concerning the stochastic block model following~(\ref{eq.SBM_ub}) that the case $k=2$ of this variant will be used in Remark~\ref{rem.bootstrap} to obtain the relevant upper bound.
\begin{proposition}\label{prop.moddiff_k}
For each $\eps>0$, there is a $c=c(\eps)$ such that the following holds.  Let the graph $G$ and probability $p$ satisfy $e(G)p/v(G) \geq c$, and let $k \geq 2$. Then with probability $\geq 1-\eps$ the 
random graph $G_p$ satisfies $|\qk(G_p) - \qk(G)| < \eps$.
 \end{proposition}

\begin{proof}
We may follow the proof of Theorem~\ref{thm.moddiff}(a), with a few natural small changes which we detail below. Note that since we do not prove an analogue of Theorem~\ref{thm.moddiff}(b), we need only prove analogues of~\eqref{eqn.a}, \eqref{eqn.B1} and \eqref{eqn.B2}, i.e. not~\eqref{eqn.b}.

For partitions with at most $k$ parts we define analogues of the events $\cB_0$ and $\cB_2$ (the event $\cB_1$ is unchanged, and no analogue of $\cE_0$ is needed since we do not prove the analogue of \eqref{eqn.b}). Let $\cB_0^{(k)}$ be the event $\{\qk(G_p) < \qk(G) - \eps\}$. 
By Theorem~\ref{thm.moddiff} (b) (with $\eps$ replaced by $\eps/3$) applied to an optimal partition $\cA$ with at most $k$ parts (and noting that $\cA'$ has at most $k$ parts), we have $\pr(\cB_0^{(k)}) \leq \eps/3$.

As before let $\eta= \tfrac{\eps}{9}$. Let $\cQ^{(k)}$ be the set of partitions which are $\frac{\eta}2$-fat for $G$ and have at most $k$-parts; and let $\cB_2^{(k)}$ be the event that there is a partition $\cA \in \cQ^{(k)}$ such that $q_{\cA}(G_p) > q_{\cA}(G)+\tfrac{\eps}{2}$. 
We must establish the analogues of the statements~(\ref{eqn.a}), \eqref{eqn.B1} and~\eqref{eqn.B2}.

\begin{proof}[Proof of analogue of~(\ref{eqn.a})]
We want to show that
$ \{ \qk(G_p) > \qk(G)+\eps\}  \subseteq \cB_1 \cup \cB_2^{(k)}$.
As before by Lemma~\ref{lem.nosmall2} (and since $2 \eta \leq \eps/2$),
\begin{eqnarray*}
%&&
\{ \qk(G_p) > \qk(G)+\eps\}%\\
& \subseteq &
  \{\exists\, \cA: \: |\cA|\leq k, \: \cA \mbox{ is } \eta\!-\!\mbox{fat for} \, G_p, \: q_{\cA}(G_p) > \qk(G)+\tfrac{\eps}{2}\} \\
& \subseteq &
  \{\exists\, \cA: \: |\cA|\leq k, \: \cA \mbox{ is } \eta\!-\!\mbox{fat for} \, G_p, \: q_{\cA}(G_p) > q_{\cA}(G)+\tfrac{\eps}{2}\} \\
& \subseteq &
  \{\exists\, \cA \not\in \cQ^{(k)} : \: |\cA|\leq k, \: \cA \mbox{ is } \eta\!-\!\mbox{fat for} \, G_p \} \cup \cB_2^{(k)} \\
& \subseteq & \cB_1 \cup \cB_2^{(k)},\end{eqnarray*}
as required.
\end{proof}

Since the event $\cB_1$ is unchanged, the statement of \eqref{eqn.B1} is unchanged and thus $\pr(\cB_1) < \eps/3$. The analogue of~\eqref{eqn.B2} is to prove that $\pr(\cB_2^{(k)})<\eps/3$. But $\cB_2^{(k)}\subseteq \cB_2$ and thus $\pr(\cB_2^{(k)}) \leq \pr(\cB_2) <\eps/3$ by (the original)~\eqref{eqn.B2}. 
We have now completed the proof of Proposition~\ref{prop.moddiff_k}.
\end{proof}

\begin{remark}\label{rem.bootstrap}
We may now complete the discussion of the bootstrapping example from Section~\ref{subsec.bootstrap}.  We give details for how to show that equality holds in~(\ref{eq.SBM_ub}) when $p, q = \omega(n^{-1})$. This will follow from spectral results on denser graphs, robustness of modularity and Proposition~\ref{prop.moddiff_k}.

Recall that $\q(G)\leq \bar{\lambda}$ and $\qtwo(G)\leq \bar{\lambda}/2$ where $\bar{\lambda}$ is the spectral gap of $G$, see Lemma 6.1 of~\cite{ERmod}. Now let $a> b>0$ satisfy the condition $\sqrt{a} - \sqrt{b}>\sqrt{2}$. Then a result from~\cite{deng2021strong} %Theorem 10
says that, for $p=n^{-1} a \log n$ and $q=n^{-1} b \log n$,
we have $\bar{\lambda}=(a-b)/(a+b)+o(1)$ whp; and thus
\begin{equation}\label{eqn.planted1}
\qtwo(G_{n, 2, p, q}) = \frac{p-q}{2(p+q)}+o(1) \;\;\; \mbox{whp}.
\end{equation}
We can use Proposition~\ref{prop.moddiff_k} to bootstrap~(\ref{eqn.planted1}) to sparser $p$ and $q$ (and with no condition corresponding to $\sqrt{a} - \sqrt{b}>\sqrt{2}$).
Let $\alpha>\beta>0$; let $\omega=\omega(n) \to \infty$ (arbitrarily slowly) as $n \to \infty$, with $\omega(n) \leq \log n$; and let $p= \alpha\, \omega /n$ and $q= \beta\, \omega /n$. Let us show that still~\eqref{eqn.planted1} holds.
Let $p^+=c \alpha (\log n)/n$ and $q^+ = c \beta (\log n)/n$, for some constant $c  \geq 1$ such that $\sqrt{c\alpha}-\sqrt{c\beta}>\sqrt{2}$, so whp by~\eqref{eqn.planted1}
\[ \qtwo(G_{n, 2, p^+,q^+}) = \frac{p^+-q^+}{2(p^++q^+)} +o(1)  =\frac{p-q}{2(p+q)} +o(1).\]
Also, whp $e(G_{n, 2, p^+,q^+}) = \frac14 (\alpha+\beta) c n \log n +o(n^{2/3})$.  Let $x=\omega/(c \log n)$ (so $0 \leq x \leq 1$).  Then the sampled random graph $(G_{n, 2, p^+,q^+})_x$ satisfies 
$(G_{n, 2, p^+,q^+})_x =_s G_{n,p,q}$, and whp $e((G_{n, 2, p^+,q^+})_x)/n = \frac14 (\alpha+\beta) \omega +o(1) \to \infty$.  Hence by Proposition~\ref{prop.moddiff_k} we have
$\qtwo(G_{n, 2, p, q}) = \qtwo(G_{n, 2, p^+,q^+}) +o(1)$;
whp and we have shown that~\eqref{eqn.planted1} holds with the current choice of $p,q$, as we aimed to do.\end{remark}

%%%%%%%%%%%%%%%%%%%%%%%%%%%%%%%%%%%

%%%%%%%%%%%%%%%%%%%%%%%%%%%%%%%%%%%

\needspace{6\baselineskip}
\section{Robustness of modularity, and closeness and concentration for $\q(G_p)$ and $\q(G_m)$}\label{sec.robust_close_conc}
\subsection{Robustness} 
\label{subsec.robustness}
We shall use deterministic robustness results  relating to two graph operations, namely moving or deleting a set of edges, and concerning two graph parameters, namely the modularity score $q_\cA(G)$ for a given partition $\cA$, and the (maximum) modularity $\q(G)$. We first briefly collect some already known results together with one new bound in Lemma~\ref{lem.robustness} below. For edge deletion the bound on $q_\cA$ is new while the bound on $\q$ follows from Lemma~5.1 in~\cite{ERmod}; and for moving edges the bounds follow from Lemma~5.3 and its proof in the same paper. See also Example~\ref{ex.robustness} below which gives examples showing the bounds in Lemma~\ref{lem.robustness} are asymptotically tight.
\begin{lemma}\label{lem.robustness}
Let $H=(V,E)$ be a graph and let $\cA$ be a partition of $V$.
\begin{itemize}
    \item[(a)]
If $E_0$ is a non-empty proper subset of $E$, and $H'=(V, E \setminus E_0)$, then 
\[ |q_\cA(H)- q_\cA(H')|  \, , \; |\q(H)-\q(H')|< \frac{2 \, |E_0|}{|E|}. \]
\item[(b)]
If $\tilde{E} \neq E$ is a set of edges with $|\tilde{E}|=|E|$, and $\tilde{H}=(V,\tilde{E})$, then
\[ |q_\cA(H)- q_\cA(\tilde{H})| \, , \; |\q(H)-\q(\tilde{H})| < \frac{|E \!\bigtriangleup\! \tilde{E}|}{|E|}. \]
\end{itemize}
\end{lemma}
We give examples to show tightness of Lemma~\ref{lem.robustness} before proceeding with the proof. See the concluding remarks for a related open problem.
\begin{example}\label{ex.robustness}
The examples here show tightness of the lemma - it is not possible to replace the factors 2 and 1 above by smaller constants in any of the four cases. 
    
For the examples let us first recall some simple properties of modularity. Firstly, the placement of any isolated vertices in a partition is irrelevant. Secondly, for any part $A$ in any optimal partition~$\cA$ of a graph, the graph induced by the non-isolated vertices in $A$ is connected~\cite{nphard}. Thirdly, for any leaf vertex $u$ with pendant edge $uv$, the vertices $u$ and $v$ are in the same part in any optimal partition~\cite{nphard, thesis}. 
\begin{itemize}
   \item[(a)] Let $H_1$ be the graph consisting of a star on $m$ edges together with a disjoint edge $e$. Take $E_0=\{e\}$ and thus $H_1'$ is a star on $m$ edges together with two isolated vertices. We consider the bipartition $\cA$ which in $H_1$ places the vertices of the star in one part and the vertices of the disjoint edge in the other. Note that $\cA$ is an optimal partition of $H_1$ and of $H_1'$. 
    Since~$H_1'$ is a star plus isolated vertices, $\q(H_1')=q_\cA(H_1')=0$, and we may calculate 
        \[q_\cA(H_1)=1-\frac{(2m)^2+2^2}{4(m+1)^2}=\frac{2m}{(m+1)^2}.\]
    Noting that $\{e\}/|E|=1/(m+1)$, for this choice of $H_1$ and $H_1'$ we have
        \[  | \q(H_1)- \q(H_1') | = | q_\cA(H_1)-q_\cA(H_1') | = \frac{2m}{(m+1)^2} = \frac{2m}{m+1}\frac{|E_0|}{|E|} \]
    and thus we may get arbitrarily close to the factor 2. 
    \item[(b)] Let $H_2$ be the graph $H_1$ above except that we add an isolated vertex. Thus $H_2$ is the graph consisting of a star with central vertex $u$ on $m$ edges together with a disjoint edge $e$ and an isolated vertex $v$. Let $\tilde{H}_2$ be obtained from $H_2$ by removing $e$ and adding the edge $e'=uv$ : thus $\tilde{H}_2$ is a star on $m+1$ edges together with two isolated vertices. 
        
    We consider the bipartition $\cA$ which places the vertices of the star and the isolated vertex together in one part and the disjoint edge in the other part. Then $\cA$ is an optimal partition of $H_2$ and of $\tilde{H}_2$ by the same argument as in part (a), and $\q(H_2)=\q(H_1)=2m/(m+1)^2$. Since $\tilde{H}_2$ is a star plus isolated vertices $\q(\tilde{H}_2)=q_\cA(\tilde{H}_2)=0$.

    Noting that $|E\bigtriangleup \tilde{E}| / |E| = 2/(m+1)$, for this choice of $H_2$ and $\tilde{H}_2$ we have
        \[  | \q(H_2)-\q(\tilde{H}_2) | = | q_\cA(H_2)-q_\cA(\tilde{H}_2) | = \frac{2m}{(m+1)^2} = \frac{m}{m+1}\frac{|E\bigtriangleup \tilde{E}|}{|E|} \]
    and thus we may get arbitrarily close to the factor 1.
\end{itemize}
\end{example}
To prove the inequality concerning $q_\cA$ in Lemma~\ref{lem.robustness}(a), we will use Lemma~\ref{lem.res_limit_two} bounding the edge contribution minus twice the degree tax. 
\begin{lemma}\label{lem.res_limit_two}
    Let $H$ be a graph on $m\geq 1$ edges and let $\cA$ be a vertex partition. Then
    \[q^E_\cA(H)-2q^D_\cA(H)\geq -1\,.\]
\end{lemma}
\begin{proof}
For each part $A_i \in \cA$, let us write $a_i=\frac1m e(A_i)$ and $b_i=\frac{1}{m}e(A_i, \bar{A_i})$ for the proportion of edges internal to part $A_i$ and proportion between part $A_i$ and the rest of the graph respectively. Note that $\sum_i (a_i + \frac{1}{2}b_i) =1$.  
We also set $a=\sum_i a_i=q^E_\cA(H)$ for the proportion of edges within parts of the partition. Similarly let $b=\frac{1}{2} \sum_i b_i$, so $b$ is the proportion of edges between parts, and note that $b_i \leq b$ for each $i$. Also observe that $a+b=1$.

The degree tax can be written
 \begin{equation}\label{eq.robust_degtax}
        q^D_\cA(H)=\sum_i (a_i+b_i/2)^2 = \sum_i a_i^2 +\sum_i a_i b_i + \tfrac{1}{4}\sum_i b_i^2.
 \end{equation}
We now upper bound each term of the RHS of \eqref{eq.robust_degtax} in turn. First note that $\sum_i a_i=a$ implies $\sum_i a_i^2\leq a^2$.
Next, since each $b_i \leq b$ and $\sum_i a_i = a$ we have $\sum_i a_i b_i \leq ab$. Similarly, each $b_i \leq b$ and $\sum_i b_i = 2b$ implies that $ \sum_i b_i^2 \leq b \,(2b) = 2b^2.$
Thus by these bounds and~\eqref{eq.robust_degtax}
\[q^D_\cA(H)\leq a^2 + ab + \tfrac{1}{2}b^2 = \tfrac{1}{2}+ \tfrac{1}{2}a^2\]
since $a+b=1$. 
Recalling that $q^E_\cA(H)=a$, we get 
\[q^E_\cA(H)-2q^D_\cA(H) \geq a -1 - a^2 = -1 + a(1-a) \geq -1 \]
where the last inequality follows since $0 \leq a \leq 1$.
\end{proof}
\bigskip

\begin{proof}[Proof of Lemma~\ref{lem.robustness}(a), bound on $q_\cA$] We begin by following the proof of Lemma~5.1 in~\cite{ERmod}. As there, let $\alpha=\alpha(\cA) = |E_1|/|E|$
where $E_1$ is the set of edges in $E_0$ which lie within the parts of $\cA$ and $\beta=\beta(\cA)= |E_0 \backslash E_1|/|E|$. That is, $\alpha$ is the proportion of the edges in $E$
which are in $E_0$ and lie within the parts of~$\cA$, and $\beta$ is the proportion which are in $E_0$ and lie between the parts. 

Note that the statement we wish to prove follows from the following two inequalities.
\begin{equation}\label{eq.possible_increase_in_mod}
    q_\cA(H') - q_\cA(H) < 2\alpha + 2\beta 
\end{equation}
and
\begin{equation}\label{eq.possible_decrease_in_mod}
    q_\cA(H) - q_\cA(H') < 2\alpha + \beta \,.
\end{equation}
The first of these, \eqref{eq.possible_increase_in_mod}, is equation (5.2) in~\cite{ERmod} and was proven there, so it remains only to prove~\eqref{eq.possible_decrease_in_mod}.
Directly from the definitions we may calculate the difference in edge contributions, (this is (5.4) in~\cite{ERmod}), 
\begin{equation}\label{eq.difference_in_edge_cont}q^E_\cA(H)-q^E_\cA(H') = \alpha  - (\alpha+\beta)\, q^E_\cA(H')\,.\end{equation}
Note also the following simple bound on the possible increase in the degree tax. Since  $|E \backslash E_0|=(1-\alpha-\beta)|E|$ we have
\begin{equation*}
    q_\cA^D(H) > \frac{1}{4|E|^2}\sum_{A \in \cA} \vol_{H'}(A)^2 = (1-\alpha-\beta)^2 q^D_\cA(H')
\end{equation*}
and so
\[ q_\cA^D(H') - q_\cA^D(H) < 2 (\alpha+\beta) q_\cA^D(H')\,. \]
Thus by~\eqref{eq.difference_in_edge_cont} we have (this is (5.6) in~\cite{ERmod}) 
\begin{equation}\label{eq.possible_decrease_working}
    q_\cA(H) - q_\cA(H') < \alpha\! -\! (\alpha+\beta)q^E_\cA(H')+2(\alpha+\beta)q_\cA^D(H') = \alpha - (\alpha+\beta)(q^E_\cA(H')-2q^D_\cA(H')).
\end{equation}
Now we may apply Lemma~\ref{lem.res_limit_two} to infer that the RHS of~\eqref{eq.possible_decrease_working} is at most $2\alpha+\beta$ which proves~\eqref{eq.possible_decrease_in_mod}.
Finally, by~\ref{eq.possible_increase_in_mod} and~\eqref{eq.possible_decrease_in_mod},
\[ |q_\cA(H)- q_\cA(H')|< 2(\alpha+\beta) =\frac{ 2 \, |E_0|}{|E|},\]
as required.\end{proof}

\needspace{3\baselineskip}
\subsection{Closeness of $\q(G_p)$ and $\q(G_m)$}
\label{subsec.closeness}
Given a graph $G$ and $1 \leq m \leq e(G)$, how close are $\q(G_m)$ and $\q(G_p)$, where $p = m/e(G)$? This question gives rise to Lemma~\ref{lem.GpGm}, which we now state and prove. Note that Corollaries~\ref{cor.obsmod} and~\ref{cor.moddiff} then follow by Lemma~\ref{lem.GpGm} and Theorem~\ref{thm.obsmod} and~\ref{thm.moddiff} respectively.
\begin{lemma}
\label{lem.GpGm}
Let $G$ be a graph, let $1 \leq m \leq e(G)$, and let $p = m/e(G)$. Then for any partition~$\cA$ of~$V(G)$
\begin{equation} \label{eqn.E}
| \E[\q(G_p)] - \E[\q(G_m)] | \, , \; |\E[q_\cA(G_p)] - \E[q_\cA(G_m)] | \leq 2 \sqrt{\tfrac{1-p}{m}}\,.
\end{equation}
Also, we can couple $G_p$ and $G_m$ so that, for any partition $\cA$ of~$V(G)$, if $\omega(m) \to \infty$ as $m \to \infty$ then
\begin{equation} \label{eqn.whp}
| \q(G_p) - \q(G_m)| \; ,\;
| q_\cA(G_p) - q_\cA(G_m)| \; \leq \; \omega(m) \sqrt{\tfrac{1-p}{m}} \;\; \mbox{ whp}\,.
\end{equation}
\end{lemma}

\begin{proof}
Let $X=e(G_p)$, so $X \sim \Bin(e(G),p)$ with mean $m$ and variance $\var(X)=m(1-p)$.  Couple $G_p$ and $G_m$ so that their edge sets are nested: to do this, we may list the edges of $G$ in a uniformly random order, let $G_p$ have the first $X$ edges and let $G_m$ have the first $m$ edges. By Lemma~\ref{lem.robustness}
\begin{equation} \label{eqn.lem}
    | \q(G_p) - \q(G_m)| \leq \tfrac{2\,|X-m|}{m}\,.
\end{equation}
But
\begin{eqnarray*}
  \left(\E[\q(G_p)] - \E[\q(G_m)] \right)^2 & \leq &
  \E[(\q(G_p) - \q(G_m))^2]\\
  & \leq & \tfrac{4}{m^2} \var(X) \; = \; \tfrac{4(1-p)}{m}\,,
\end{eqnarray*}
where we use~\eqref{eqn.lem} for the second inequality; 
and the first inequality in~\eqref{eqn.E} follows. 
The second inequality in~\eqref{eqn.E} may be proved in the just same way.
Also, by Chebyshev's inequality,  for $t >0$
\[ \pr(|X-m|  \geq t) \leq \tfrac{\var(X)}{t^2} = \tfrac{m(1-p)}{t^2}.\]
So, setting $t = \frac12 \omega \sqrt{m(1-p)}$, whp $|X-m| \leq t$.  Hence
\[
| \q(G_p) - \q(G_m)| \leq \tfrac{2t}{m} = \omega \, \sqrt{\tfrac{1-p}{m}} \;\; \mbox{ whp}\,;
\]
and we have proved the first inequality in~(\ref{eqn.whp}).  The second inequality may be proved in just the same way. This  completes the proof of Lemma~\ref{lem.GpGm}.
\end{proof}

\needspace{3\baselineskip}
\subsection{Concentration of $\q(G_p)$ and $\q(G_m)$}\label{subsec.concentration}
We restate Theorem~\ref{thm.bconc} from the introduction, adding also concentration results for the random edge model. Recall that, given a fixed underlying graph $G$, for $0 < m \leq e(G)$ the random graph $G_m$ is obtained by uniformly sampling all $m$-edge subsets of $G$.

For the random graphs $G_{n,m}$ and $G_{n,p}$, Theorem 7.1 of \cite{ERmod} showed that the modularity values are highly concentrated about their expected value. A very similar result is true, with similar proof, for the case of edge sampling. We use the results on $G_m$ to deduce the results on $G_p$.

\begin{theorem}\label{thm.bconcBoth}
\begin{itemize}
\item[(a)] Given graph $G$, a partition $\cA$, and $0 \leq m \leq e(G)$, for each $t>0$ 
\begin{equation*} \pr\Big( \big| \, q_\cA(G_m) - \E[q_\cA(G_m)] \, \big| \geq t \Big) < 2 \, e^{-t^2m/2 } 
\;\mbox{ and }\;
\pr\Big( \big| \q(G_m) - \E[\q(G_m)] \big| \geq t \Big) < 2e^{-t^2m/2}. \end{equation*}
\item[(b)]
There is a constant $\eta>0$ such that for each graph $G$, each partition $\cA=\cA(G)$ and each $0<p<1$ the following holds with $\mu=\mu(G,p) = e(G) p$.  
For each $t \geq 0$ 

\begin{equation*} \pr\Big( \big| \, q_\cA(\Gp) - \E[q_\cA(\Gp)] \, \big| \geq t \Big) < 2 \, e^{-\eta \mu t^2} 
\;\mbox{ and }\;
\pr\Big( \big| \, \q(\Gp) - \E[\q(\Gp)] \, \big| \geq t \Big) < 2 \, e^{-\eta \mu t^2}.\end{equation*}
\end{itemize}
\end{theorem}
The results in Theorem~\ref{thm.bconcBoth} concerning $\q(G_m)$ and $\q(G_p)$ extend Theorem 7.1 of \cite{ERmod}.
As well as the robustness results above, in the proof we  use Lemma~\ref{lem.conc}, and the following concentration result from~\cite{bbddiffs} Theorem 7.4 (see also Example 7.3).
\needspace{4\baselineskip}
\begin{lemma}
\label{lem.setconc}
 Let $A$ be a finite set, let $a$ be an integer such that $0\leq a \leq |A|$, and consider the set $\binom{A}{a}$ of all $a$-element subsets of $A$.
  Suppose that the function $f:\binom{A}{a}\rightarrow \R$ satisfies $|f(S)-f(T)|\leq c$ whenever $|S \triangle T|=2$  (i.e.\ the $a$-element subsets $S$ and $T$ are minimally different). If the  random variable $X$ is uniformly distributed over $\binom{A}{a}$, then 
 \begin{equation*} \pr \left( \Abs{f(X)-\E[f(X)]} \ge t \right)  \le 
  2 e^{-2t^2/a c^2}.\end{equation*} 
\end{lemma}

\bigskip
\begin{proof}[Proof of Theorem~\ref{thm.bconcBoth} part (a)] 
By Lemma~\ref{lem.robustness}(b), if $E(H)$ and $E(\tilde{H})$ are both of size $m$ and are minimally different then $|q_\cA(H)-q_\cA(\tilde{H})|$, $|\q(H)-\q(\tilde{H})| < 2/m$. Hence, Lemma~\ref{lem.setconc} with $A = E(G)$, $a=m$ and taking $c=2/m$ for $q_\cA$ and  for $\q$ immediately yields part (a) of Theorem~\ref{thm.bconcBoth}.
\end{proof}

\begin{proof}[Proof of Theorem~\ref{thm.bconcBoth} part (b)] 
Let $M=e(G_p)$ and let $\mu=\mu(n,p) = \E[M] = e(G) p$. 
We will first show the more detailed statement that,
for each $t \geq 42/\sqrt{\mu}$, we have
\begin{equation} \label{eqn.showconc}
\pr \big( \big|q_\cA(G_p)-\E[q_\cA(G_p)] \big| \geq t \big) \leq 6 e^{-t^2 \mu/150},
\end{equation}
from which the result for $q_\cA$ in part~(b) of the theorem follows. 
We show the proof in detail for concentration of $q_\cA$ then indicate the few differences to be made in showing the result for~$\q$.

For any graph and any partition the modularity score lies in the interval~$[-1/2,1)$, see~\cite{nphard}; and hence we may assume that $0 \leq t < 3/2$. Define the event $\mathcal{E} = \{ M > 2\mu /3 \}$ and let $\cE^c$ denote the complement of~$\cE$,
\begin{eqnarray}\label{eqn.splitintotwo}
&&\!\!\!\!\!\!\!\!\pr \big( \big|q_\cA(G_p)\!-\!\E[q_\cA(G_p)] \big| \geq t \big) \\
\notag&&\leq 
\pr\big(\big(\big|q_\cA(G_p)\!-\!\E[q_\cA(G_p)|M] \big| \geq \tfrac{t}{2} \big)  \land \cE \big) + \pr\big( \big|\E[q_\cA(G_p)|M]\!-\!\E[q_\cA(G_p)] \big|\geq \tfrac{t}{2}  \big) \land \cE \big) + \pr(\cE^c).
\end{eqnarray}
The proof proceeds by bounding separately the terms on the right in \eqref{eqn.splitintotwo}. Firstly, by using part~(a) of the theorem and conditioning on $M=m$ where $m >2\mu/3$, we have a bound on the first term of~\eqref{eqn.splitintotwo}
\begin{eqnarray}
\notag
\pr\Big( \big(\, \big|\, q_\cA(G_p)-\E[\q(G_p)|M] \,\big| \geq \tfrac{t}{2} \big) \land \cE \Big)
&\leq& 
2 \exp ( - \tfrac12 (\tfrac{t}{2})^2 (\tfrac{2 \mu}{3}) ) = 2\exp(-t^2 \mu/ 12 ).
\end{eqnarray}
The third term is also straight forward to bound. By Lemma~\ref{lem.conc} and since $t < 3/2$, 
\[ \pr(\cE^c) \leq 2 \exp( - \tfrac13 (\tfrac 13)^2 \mu) = 2 \exp(-\mu/27)  <  2 \exp(- (4/243) t^2\mu).
\]
It now remains to bound the second term of~\eqref{eqn.splitintotwo}. Let $G'_{p}$ be a random subgraph of $G$ obtained by keeping each edge with probability $p$, independently of $G_p$, and let $M'=e(G_p')$. By coupling and by Lemma~\ref{lem.robustness}, for $0< m \leq e(G)$
\begin{equation*}
\Big| \E\big[q_\cA(G_p)|M=m] - \E[q_\cA(G'_p)|M'=m'] \Big| \leq \frac{2|m-m'|}{m} .\end{equation*} 
For any $x$ and any random variable $Y$ with finite mean $\big| x - \E[Y] \big| \leq \E_Y[|x-Y|]$ and thus for $0<m \leq e(G)$, 
\begin{eqnarray}
\notag
\big|\E [q_\cA(G_p)|M=m] -\E(q_\cA(G'_p))\big| & \leq  & 
\E_{M'}\big[ \big|\E [q_\cA(G_p)|M=m]-\E[q_\cA(G_p')|M'] \big| \big] \\
\notag
& \leq & (2/m) \, \E_{M'} [|m-M'|] \\
\label{eqn.threeonm}& \leq & (2/m)\, \big(|m-\mu|+ \E[|M'-\mu|] \big).
\end{eqnarray}
Note $\E[|M'-\mu|]  \leq \sqrt{\E[(M' -\mu)^2]} \leq \sqrt{\mu}$ and so
\begin{eqnarray*}
\pr\Big( \big(\big|\E[q_\cA(G_p)|M] - \E[q_\cA(G_p)] \big| \geq \tfrac{t}{2} \big) \land \cE \Big) 
\notag
& \leq & \pr\Big( \big( | M-\mu| +\sqrt{\mu} \geq \tfrac{tM} 4 \big) \land \cE \Big)\notag \\
&\leq& 
\pr\Big( | M-\mu| \geq \tfrac{t \mu} 6-\sqrt{\mu}  \Big) \notag \\ 
&\leq& 
\pr\Big( | M-\mu| \geq \tfrac{t \mu} 7 \Big) 
\end{eqnarray*} 
since we assumed that $t \geq 42 \mu^{-1/2}$ and so $t\mu/6-\sqrt{\mu} \geq t\mu/7$. By Lemma~\ref{lem.conc} 
\[\pr\Big( | M-\mu| \geq t \mu /7 \Big)  \leq 2\exp\Big( - \tfrac13 (t/7)^2 \mu  \Big) = 2 \exp\big(- t^2 \mu/147\big),\]
and thus we have a bound for the second term
\begin{equation*} \pr\Big( \big(\big|\E[q_\cA(G_p)|M] - \E[q_\cA(G_p)] \big| \geq \tfrac{t}{2} \big) \land \cE \Big) \leq 2\exp\big( - t^2\mu /147 \big),\end{equation*}
which completes the proof of~(\ref{eqn.showconc}). It is now possible to choose $\eta$ small enough that the inequality holds for all $t \geq 0$ and we may take $2$ as the coefficient of the exponential - the details of this calculation appear for example in the last part of the proof of Theorem~7.1 of \cite{ERmod}.  This completes the proof of the first half of part (b).

To prove the corresponding result for $\q$ the same technique may be used; in particular we bound the probability that $|\q(G_p)-\E[\q(G_p)]|$ is large by considering the sum of three terms as in \eqref{eqn.splitintotwo}. The first and  second term can then be handled in the same way. For the third term we can get a slightly better bound by noticing that since $\q(H) \in [0,1)$ for any graph $H$ we may assume~$t < 1$ instead of $t < 3/2$.
\end{proof}

\needspace{6\baselineskip}
\section{Estimating modularity by sampling a fixed number of vertices: proof of Theorem~\ref{thm.modest}}
\label{sec.estim}

A graph parameter $f$ is \emph{estimable} (or \emph{testable}), see~\cite{BCLSVjournal} and \cite{lovasz2012large}, if for every $\eps>0$ there is a positive integer $k = k(\eps)$ such that if $G$ is a graph with at least $k$ vertices, then for $X$ a uniformly random k-subset of $V(G)$ we have $\pr(|f(G)-f(G[X])| >\eps)<\eps$. 
We shall see here that the modularity value~$\q(G)$ is estimable for dense graphs but not more generally. For convenience we restate Theorem~\ref{thm.modest} from the introduction.

\begin{theorem*}[restatement of Theorem~\ref{thm.modest}] 
(a) For fixed $\rho$ with $0<\rho<1$, modularity is estimable for graphs with density at least~$\rho$. (b) For any given function $\rho(n) = o(1)$, modularity is not estimable for $n$-vertex graphs with density at least~$\rho(n)$.
\end{theorem*}

We need some definitions.
If $G$ is a graph and $S, T \subseteq V(G)$ then $e_G(S,T)$ is the number of ordered pairs $(s,t) \in S \times T$ such that there is an edge in $G$ between $s$ and $t$. If $G$ and $G'$ are graphs with the same vertex set $V$, then the \emph{cut distance} $d_\square(G,G') := |V|^{-2} \max_{S,T}|e_G(S,T)- e_{G'}(S,T)|$.
Given a graph $G$ and $b \in \NN$, we let $G(b)$ denote the $b$-\emph{blow-up} of $G$, where each vertex of $G$ is replaced by $b$ independent copies of itself; and thus $v(G(b))=b\,v(G)$ and $e(G(b))= b^2\, e(G)$.

\begin{example}
$\q(C_5(2)) > \q(C_5)$.  The unique form of an optimal partition of $C_5$ is into one path $P_2$ and one $P_3$, with modularity 
$\frac35 - \frac{4^2 + 6^2}{10^2} = \frac35 - \frac{52}{100} = \frac 2{25}$. For $C_5(2)$ we can balance the partition, into two copies of $K_{2,3}$, with modularity score $\frac{12}{20} - \frac12 = \frac1{10} > \frac2{25}$.
\end{example}
\bigskip
\begin{proof}[Proof of Theorem~\ref{thm.modest}(a)]
We shall use Theorem~15.1 of \cite{lovasz2012large}, which says that a graph parameter~$f(G)$ is estimable if and only if the following three conditions hold.
\begin{itemize}
    \item[(i)] If $G_n$ and $G'_n$ are graphs on the same vertex set and $d_\square(G_n, G'_n)\!\rightarrow\! 0$ then $f(G_n)\!-\!f(G'_n)\!\rightarrow\!~0$. 
    \item[(ii)] For every graph $G$, $f(G(b))$ has a limit as $b \rightarrow \infty$.
    \item[(iii)] $f(G\!\cup\!K_1)-f(G) \rightarrow 0$ if $v(G)\rightarrow \infty$ (where $G\!\cup\!K_1$ denotes the graph obtained from $G$ by adding a single isolated vertex).
\end{itemize}
Observe that always $\q(G\!\cup\!K_1)=\q(G)$, so we need be concerned here only about conditions (i) and~(ii).  We shall show that condition (ii) concerning blow-ups always holds.  After that, we shall show that condition (i) concerning cut distances holds, as long as the graphs are suitably dense.  Finally we give examples for sparse graphs which show that in this case modularity is not estimable.
\paragraph*{Blow-ups of a graph : condition (ii)}
Recall that $G(b)$ is the $b$-\emph{blow-up} of $G$.
Observe that always $\q(G(b)) \geq \q(G)$.  For if $\cA$ is an optimal partition for $G$, with $k$ parts, then the natural corresponding $k$-part partition for $G(b)$ has modularity score $\q(G)$.  Thus also $\q(G(jb)) \geq \q(G(b))$ for every $j \in \NN$. \,

Let $G$ be a (fixed) graph.  We need to show that $\q(G(b))$ tends to a limit as $b \to \infty$. Let $q^{**}(G)$ be $\sup_b \q(G(b))$ where the sup is over all $b \in \NN$. We shall see that in fact
\begin{equation} \label{eqn.qstarstar}
\q(G(b)) \to q^{**}(G) \;\;\; \mbox{ as } \; b \to \infty.
\end{equation}
If $G$ has no edges then $\q(G(b))=0$ for all $b \in \NN$; so we may assume that $e(G) \geq 1$.
Let $a \in \NN$, let $j \in \NN$ and let $b \in \NN$ satisfy $(j-1)a \leq b < ja$.  Then
\[ \frac{e(G(ja)) - e(G(b))}{e(G(ja))} = \frac{j^2a^2 - b^2}{j^2a^2} \leq \frac{j^2 - (j-1)^2}{j^2} < \frac{2}{j}. \]
Hence by a robustness result, Lemma 5.1 of~\cite{ERmod}, (see also Lemma~\ref{lem.robustness} in this paper) we have that
$| \q(G(b)) - \q(G(ja))| < \tfrac4{j}$;
and so
\[ \q(G(b)) > \q(G(ja)) - \tfrac4{j}. \]
Let $\eps>0$.  Let $a \in \NN$ be such that $\q(G(a)) \geq q^{**}(G) - \eps/2$.
Then, for all $b \in \NN$ such that $b \geq 8a/\eps$, letting $j \in \NN$ be such that $(j-1)a \leq b < ja$\, (so $j > 8/\eps$ and thus $4/j < \eps/2$) we have
\[ \q(G(b)) \geq \q(G(ja)) - \eps/2  \geq \q(G(a)) - \eps/2 \geq q^{**}(G) - \eps \,. \]
Now~(\ref{eqn.qstarstar}) follows, as required.

\paragraph*{Cut distance and modularity : condition (i) for dense graphs}

Fix $\rho$ with $0<\rho < 1$.  A graph $G$ is called $\rho$-\emph{dense} if $e(G) \geq \rho \, v(G)^2/2$.
Let $0<\eps<1$.  We want to show that there exists $\delta>0$ such that for all $\rho$-dense graphs $G$, $G'$ with the same vertex set and such that $d_\square(G, G') \leq \delta$ we have $|\q(G)-\q(G')| \leq \eps$.  With foresight, we shall take $\delta = \tfrac{\rho \, \eps}{16 \,+\,4/\eps}$. 

By Lemma~1 in~\cite{dinh2011finding} there is a $k \leq k_0 = \lceil2/\eps\rceil$ and a partition $\cA = (A_1,\ldots,A_k)$ for $G$ such that $q_\cA(G) \geq \q(G)-\eps/2$. It suffices to show that
\begin{equation} \label{eqn.qshow}
q_\cA(G') \geq q_\cA(G)- \eps/2
\end{equation}
(since then $\q(G') \geq q_\cA(G') \geq \q(G)-\eps$, and we may similarly deduce that $\q(G) \geq  \q(G')-\eps$).
To prove~(\ref{eqn.qshow}) we first consider the edge contribution $q^E_\cA$ then the degree tax $q^D_\cA$.  Let $n=v(G)=v(G')$.\\
\smallskip

\noindent\emph{Edge contribution} $q^E_\cA$. \;
Note that
$2 |e(G)-e(G')| \leq d_\square(G, G')\, n^2 \leq \delta n^2$, so $e(G') \leq e(G)+ \frac12 \delta n^2$; and similarly
$|\inter_G(A_i)- \inter_{G'}(A_i)| \leq \frac12 \delta n^2$, so $\inter_G(A_i) \geq \inter_{G'}(A_i) - \frac12 \delta n^2$. Thus
\begin{eqnarray*}
q^E_\cA(G')-q^E_\cA(G) &=&
\sum_{i=1}^k \big( \frac{\inter_{G'}(A_i)}{e(G')} - \frac{\inter_{G}(A_i)}{e(G)}  \big)\\
& \geq &
\sum_{i=1}^k \big( \frac{\inter_{G}(A_i) - \tfrac12 \delta n^2}{e(G) + \tfrac12 \delta n^2} - \frac{\inter_{G}(A_i)}{e(G)}  \big)\\
&=&
q^E_\cA(G)\, \left( \big(1+ \frac{\delta n^2}{2 e(G)} \big)^{-1} -1 \right) - \sum_{i=1}^k \frac{\delta n^2}{2 e(G)+\delta n^2}.
\end{eqnarray*}
The second term here (minus the sum) is at least $- k\, \tfrac{\delta n^2}{2 e(G)} \geq - k_0 \, \delta/\rho$. Also, since $(1+x)^{-1} \geq 1-x$ for $x \geq 0$, the first term is at least
$- q^E_\cA(G) \, \tfrac{\delta n^2}{2 e(G)} \geq - \delta/\rho$.
Hence
\begin{equation} \label{eqn.edgecont}
q^E_\cA(G')-q^E_\cA(G) \geq - (k_0 +1)\, \delta / \rho \geq - (2/\eps +\!2)\, \delta/\rho.
\end{equation}

\smallskip

\noindent\emph{Degree tax} $q^D_\cA$.\;
Since $|\vol(G) - \vol(G')| \leq \delta n^2$ we have
\[ | \vol(G)^2- \vol(G')^2| = (\vol(G)+\vol(G')) \, | \vol(G) - \vol(G')| \leq (\vol(G)+\vol(G'))\, \delta n^2 .\]
Thus, using the last inequality if $\vol(G') \leq \vol(G)$,
\begin{equation} \label{eqn.vol1}
\vol(G')^2 \geq \vol(G)^2 - 2 \delta n^2 \vol(G) = \vol(G)^2 \, (1- 2 \delta n^2/\vol(G)) \geq \vol(G)^2 \, (1- 2 \delta/\rho).
\end{equation}
Also $|\vol_G(A_i) - \vol_{G'}(A_i)| \leq  \delta n^2$ for each $i$.  We claim that
\begin{equation} \label{eqn.vol2}
  \sum_{i=1}^k \vol_{G'}(A_i)^2 - \sum_{i=1}^k \vol_{G}(A_i)^2 \leq 2 \delta n^2 \, \vol(G').
\end{equation}
To show this, let $I= \{ i \in [k]: \vol_{G'}(A_i) \geq \vol_G(A_i)\}$.  For $i \in I$
\[ \vol_{G'}(A_i)^2 - \vol_{G}(A_i)^2 = \big(\vol_{G'}(A_i) + \vol_{G}(A_i)\big) \big(\vol_{G'}(A_i) - \vol_{G}(A_i)\big) \leq 2 \, \vol_{G'}(A_i) \cdot \delta n^2. \]
Thus
\begin{eqnarray*}
  \sum_{i=1}^k \vol_{G'}(A_i)^2 - \sum_{i=1}^k \vol_{G}(A_i)^2
& \leq &
 \sum_{i \in I} \big( \vol_{G'}(A_i)^2 - \vol_{G}(A_i)^2 \big)\\
& \leq &
  2 \delta n^2 \, \sum_{i \in I} \vol_{G'}(A_i)\\
& \leq &
  2 \delta n^2 \, \vol(G'),
\end{eqnarray*}
which completes the proof of~(\ref{eqn.vol2}).
Using~(\ref{eqn.vol2}) and then~(\ref{eqn.vol1}), we find
\begin{eqnarray*}
  q^D_\cA(G') 
& = &
  (\vol (G'))^{-2} \sum_{i=1}^k \vol_{G'}(A_i)^2\\
& \leq &
  (\vol (G'))^{-2} \big( \big( \sum_{i=1}^k  \vol_{G}(A_i)^2 \big) + 2 \delta n^2 \vol (G') \big)\\
& \leq &
  (1- 2 \delta/\rho)^{-1}\, q^D_\cA(G) +    2 \delta n^2 / \vol (G')\\
& \leq &
q^D_\cA(G) + 4 \delta/\rho + 2 \delta/\rho 
\end{eqnarray*}
since $(1-x)^{-1} \leq 1+2x$ for $0 \leq x \leq \frac12$.  Thus
\begin{equation} \label{eqn.degtax}
  q^D_\cA(G')  \leq q^D_\cA(G) + 6 \, \delta/\rho.   
\end{equation}
Putting the results~(\ref{eqn.edgecont}) on $q^E_\cA$ and~(\ref{eqn.degtax}) on $q^D_\cA$ together we have
\[ q_\cA(G') \geq q_\cA(G) - (2/\eps +\!8)\, \delta/\rho \geq q_\cA(G) - \eps/2 \]
by our choice of $\delta$.
Hence (\ref{eqn.qshow}) holds, as required.  This completes the proof of Theorem~\ref{thm.modest}(a), for dense graphs.
\end{proof}
We now give a pair of constructions which will demonstrate that modularity is not estimable.
\begin{example} \label{ex.7}
Let $0 \leq \rho(n) <1$ and let $\rho(n) \to 0$ as $n \to \infty$, arbitrarily slowly, so that in particular $\rho(n) \, n/\log n \to \infty$. Then there are connected graphs $G_n, G'_n$ on vertex set $[n]$ such that $e(G_n), e(G'_n) \geq \rho(n)\, n^2/2$; and as $n \to \infty$, $\q(G_n) \to 1$ and $\q(G'_n) \to 0$, and $e(G_n), e(G'_n) = o(n^2)$ so $d_\square(G_n,G'_n) =o(1)$. 
Thus the graphs $G_n, G'_n$ are `nearly dense' and are close in cut distance $d_{\square}$, but their modularity values are not close.

For $G_n$ we may let $k=k(n) \sim 2 \rho(n) n$, and let $G_n$ be a collection of disjoint $k$-cliques (together with at most $k-1$ $(k+1)$-cliques) joined by edges to form a path.  Then $e(G_n) \sim (n/k) \binom{k}{2} \sim nk/2 \sim \rho(n) n^2$ so $e(G_n) \geq \rho(n) n^2/2$ for $n$ sufficiently large.  Also it is easy to see that the partition~$\cA$ of the vertex set into the cliques satisfies $q_\cA(G_n) \sim 1$.

For $G'_n$ we may consider a binomial random graph $G_{n,\rho}$, which with probability at least $\frac13 +o(1)$ is connected, has at least $\rho(n) n^2/2$ edges, and has modularity at most $\eps(n)$ for a suitable $\eps(n)=o(1)$ by Theorem~1.1(c) of~\cite{ERmod} (or by other results in that paper). 
\end{example}

\begin{proof}[Proof of Theorem~\ref{thm.modest}(b)] 
Observe that if $G$ and $G'$ are two $n$-vertex graphs with at most $\eps n^2/2$ edges then
\[ d_{\square}(G,G') \leq \max \{ 2e(G), 2e(G')\}/n^2 \leq \eps. \]
Also recall condition (i) for estimability in the proof of Theorem~\ref{thm.modest}(a). We may now see that Theorem~\ref{thm.modest}(b) (for graphs that do not have at least constant positive density) follows directly from Example~\ref{ex.7} above.
\end{proof}

%%%%%%%%%%%%%%%%%%%%%%%%%%%%%%%%%%%%%%%
%%%%%%%%%%%%%%%%%%%%%%%%%%%%%%%%%%%%%%%

\needspace{6\baselineskip}
\section{Under-sampling and overestimating modularity}
\label{sec.undersamp}

When we sample few edges from a graph $H$ it seems that we tend to overestimate its modularity; that is, $\q(H_p)$ tends to be significantly larger than $\q(H)$.
For example, if $H$ is the complete graph $K_n$ and $p=1/n$, then $\q(H)=0$ but $\q(H_p) \to 1$ in probability as $n \to \infty$, see Theorem 1.1 of~\cite{ERmod}. 
Our Theorem~\ref{thm.obsmod} shows that when the expected number $e(H) p$ of edges observed is large, although we may overestimate modularity we are unlikely to underestimate it by much.  In this section we use Theorem~\ref{thm.obsmod} to prove that when the sampling probability $p$ is bounded away from 0, increasing~$p$ is unlikely to increase overestimation by much.

To state the result precisely we give one definition.  For random variables $X$ and $Y$ and $\eps>0$, we say that $X$ $\eps$-\emph{nearly (stochastically) dominates} $Y$ if
\begin{equation} \label{def.nearsd}
    \pr(X \geq t) \geq \pr(Y \geq t+\eps) - \eps \;\;\; \mbox{ for each } t.
\end{equation}
Observe that if say $X$ and $Y$ take values in $[0,1]$ and $X$ $\eps$-nearly dominates $Y$ then $\E[X] > \E[Y] - 2 \eps$, since in this case
\begin{eqnarray*}
\E[X] & \geq &  \int_{0}^{1-\eps} \pr(X \geq t)\, dt \;\; \geq 
\int_{0}^{1-\eps} (\pr(Y \geq t+\eps) - \eps) \, dt\\
& = &
\int_{\eps}^{1} \pr(Y \geq u)\, du - \eps + \eps^2 \;\;
 \geq  \E[Y] -2 \eps + \eps^2\,.
\end{eqnarray*}
We now give the main result of this section.
\begin{proposition}
\label{prop.genH}
Let $0<p_0<1$ and $\eps>0$. Then there exists $c$ such that, for any graph $H$ with at least $c$ edges and any sampling probabilities $p_1, p_2$ with $p_0 \leq p_1 < p_2 \leq 1$, it holds that $\,\q(H_{p_1})$ $\eps$-nearly dominates $\q(H_{p_2})$.
\end{proposition}
The case $p_2=1$ shows that $\pr(\q(H_{p_1}) \geq \q(H) -\eps) \geq 1-\eps$ as in Theorem~\ref{thm.obsmod}, except that now we have the lower bound $p_0>0$ on the sampling probability $p$ (and $c$ depends on $p_0$).
\begin{proof}
By Theorem~\ref{thm.obsmod} there exists $c_0$ such that for all graphs $J$ and all $0 < p \leq 1$ such that $e(J)p \geq c_0$, we have
\[ \pr \big(\q(J_p) > \q(J) - \eps/2 \big) > 1- \eps/2 .\]
Let $A$ be the set of all graphs $J$ with $e(J)p_0 \geq c_0$.  Then 
\begin{equation} \label{eqn.A}
 \pr \big(\q(J_p) > \q(J) - \eps/2 \big) > 1- \eps/2 \;\;\;\; \mbox{ for each } J \in A \; \mbox{ and } \; p_0 \leq p \leq 1 .
\end{equation}
Let $c \in \NN$ be such that \[\pr(\Bin(c,p_0) \geq c_0/p_0) \geq 1-\eps/2.\]
Fix a graph $H$ on vertex set $V$ with $e(H)\geq c$; and note that by the above, for each $p_0 \leq p \leq 1$
\begin{equation} \label{eqn.inAnew} 
\pr(H_{p} \in A) \geq \pr(H_{p_0} \in A) > 1-\eps/2.
\end{equation}
We couple $H_{p_1}$ and $H_{p_2}$ in the natural way.  Let $p=p_1/p_2$, let $Z \sim G_{n,p}$ and $H_{p_2}$ be independent.
For each graph $J$ on vertex set $V$, let $J_Z$ be the graph on $V$ with edge set $E(J) \cap E(Z)$; and observe that $(H_{p_2})_Z \sim H_{p_1}$.  
We have
\begin{eqnarray*}
&&\hspace{-3mm} \pr(\q((H_{p_2})_Z) > \q(H_{p_2}) - \eps)\\
&\! \geq\! &
\sum_{K \in A} \pr\big((H_{p_2}=K) \land (\q(K_Z) > \q(K) - \eps)\big)\\
&\! =\! &
\sum_{K \in A} \pr (H_{p_2}=K)\; \pr (\q(K_p) > \q(K) - \eps) \;\;\; \mbox{ since $Z$ and $H_{p_2}$ are independent, and } K_Z \sim K_p
\\
&\! \geq\! &
(1-\eps/2)
\sum_{K \in A}  \pr(H_{p_2}=K) \;\;\; \mbox{ by~\eqref{eqn.A}} 
\\
&\! \geq\! & (1-\eps/2)^2 > 1-\eps
\;\;\; \mbox{ by~(\ref{eqn.inAnew})}.
\end{eqnarray*}
Hence for every $t$
 \begin{eqnarray*}
 \pr(\q(H_{p_1}) \geq t) & \geq &
 \pr\big( (\q((H_{p_2})_Z) > \q(H_{p_2}) - \eps) \land (\q(H_{p_2}) \geq t +\eps) \big)\\
  & \geq &
 \pr(\q(H_{p_2}) \geq t +\eps) - \pr(\q((H_{p_2})_Z) \leq \q(H_{p_2}) - \eps)\\
 & > &
 \pr(\q(H_{p_2}) \geq t +\eps) - \eps,
  \end{eqnarray*}
so $\q(H_{p_1})$ $\eps$-nearly dominates $\q(H_{p_2})$ as required. \end{proof}

\needspace{6\baselineskip}
\section{Expected modularity when average degree is constant}
\label{sec.expmod}

The modularity of the Erd\H{o}s-R\'enyi (or binomial) random graph $G_{n,p}$ is investigated in~\cite{ERmod}. Given a constant $c>0$ we let $\bar{q}(n,c) =  \E[q^*(G_{n,c/n})]$ for each $n \geq c$.  By Theorem~1.1 of that paper, for $0<c \leq 1$ we have $\bar{q}(n,c) \to 1$ as $n \to \infty$. Let $\bar{q}(c) = 1$ for each $c \in (0,1]$.

\begin{conjecture} [\cite{ERmod}] \label{conj.limit}
For each $c>1$, $\bar{q}(n,c)$ tends to a limit $\bar{q}(c)$ as $n \to \infty$.
\end{conjecture}

It was noted in that paper that if the conjecture holds then the function $\bar{q}(c)$ would be uniformly continuous for $c \in (0,\infty)$. From Theorem~\ref{thm.obsmod} (in the present paper) we shall deduce that also $\bar{q}(c)$ would be non-increasing in~$c$.  We collect results on $\bar{q}(c)$ in the following proposition.

\begin{proposition}\label{prop.expmod}
\noindent
\begin{itemize}
    \item[(i)] for $0 < c \leq 1$, we have $\bar{q}(n,c) \to \bar{q}(c) = 1$ as $n \to \infty$;
    \end{itemize}

    and if Conjecture~\ref{conj.limit} holds then
\begin{itemize}
    \item[(ii)] $0< \bar{q}(c)<1$ for $c > 1$ 
    \item[(iii)] $\bar{q}(c)= \Theta(c^{-\frac12})$ as $c \to \infty$
    \item[(iv)] $\bar{q}(c)$ is (uniformly) continuous for $c \in (0,\infty)$
    \item[(v)] $\bar{q}(c)$ is non-increasing for $c \in (0,\infty)$.
\end{itemize}
\end{proposition}

All but part~$(v)$ of this result comes directly from~\cite{ERmod}:  part~$(i)$ (as we already noted) and part~$(ii)$ are from Theorem~1.1; part $(iii)$ is from Theorem~1.3 and part $(iv)$ is from Lemma~7.4.  Part~$(v)$ will follow immediately from inequality~(\ref{eqn.leq0}) below, so to complete the proof of Proposition~\ref{prop.expmod} it remains only to prove the following lemma.

\begin{lemma} \label{lem.qbar}
Let $0<c<c'$. For each $\eps>0$, there exists $n_0$ such that for all $n \geq n_0$ we have 
$\bar{q}(n,c') - \bar{q}(n,c) < \eps$ ;
and thus
\begin{equation} \label{eqn.leq0}
\limsup_{n \to \infty} \; (\bar{q}(n,c') - \bar{q}(n,c)) \leq 0.
\end{equation}
\end{lemma}

\begin{proof}   By Theorem~\ref{thm.obsmod} there exists $c_0$ such that for all graphs $H$ and all $0 < p \leq 1$ such that $e(H)p \geq c_0$ we have
\[  \pr (\q(H_p) > \q(H) - \eps/4) > 1- \eps/4 \, ,\]
and so
\[ \E[\q(H_p)] \geq (1-\eps/4) (\q(H)-\eps/4) > \q(H) - \eps/2. \]
Let $p=c/c'$.  Let $A$ be the set of all graphs $H$ with $e(H)p \geq c_0$.  Let $n_0  \geq c'$ be sufficiently large that for each $n \geq n_0$ we have $\pr(G_{n, c'/n} \in A) > 1- \eps/2$.  Let $n \geq n_0$. 
Let $X \sim G_{n,c'/n}$ and $Z \sim G_{n,p}$ be independent.  For each graph $H$ on $[n]$, let $H_Z$ be the graph on $[n]$ with edge set $E(H) \cap E(Z)$; and observe that $H_Z \sim H_p$.  Note also that $Y=X_Z$ satisfies $Y \sim G_{n,c/n}$. 
Given a graph $H \in A$, since $H_Z$ and $\indic_{X=H}$ are independent, 
\begin{eqnarray*}
 \E[\q(Y) \indic_{X=H}] 
& = &
  \E[\q(H_Z) \indic_{X=H}] \; = \;  \E[\q(H_p)]\, \pr(X=H)\\
& \geq &
(\q(H)-\eps/2)\, \pr(X=H).
\end{eqnarray*}
Hence
\begin{eqnarray*}
\bar{q}(n,c)  &=&
  \E[\q(Y)] \;\; \geq \;\; \sum_{H \in A} \E[\q(Y) \indic_{X=H}]\\
& \geq &
 \sum_{H \in A}  (\q(H) - \eps/2)\, \pr(X =H)\\
& > &
 \sum_{H \in A}  \q(H)\, \pr(X =H)  - \eps/2 \\
& > &
\E[\q(X)] - \eps \;\;  = \;\; \bar{q}(n,c')  - \eps
\end{eqnarray*}
since $\pr(X \not\in A) < \eps/2$.
Thus $\bar{q}(n,c') - \bar{q}(n,c) < \eps$ for each $n \geq n_0$.
It follows that
\[\limsup_{n \to \infty} \; (\bar{q}(n,c') - \bar{q}(n,c)) \leq \eps \,;\]
and since this holds for each $\eps>0$, the inequality~(\ref{eqn.leq0}) follows.
\end{proof}

%%%%%%%%%%%%%%%%%%%%%%%%%%%%%%%%%%%%
%%%%%%%%%%%%%%%%%%%%%%%%%%%%%%%%%%%%

\needspace{6\baselineskip}
\section{Modularity and edge-sampling on weighted networks}\label{sec.wmod_new}
In network applications it can be useful to consider graphs in which the edges have weights.
Following the notation of~\cite{chung1997spectral}, let $V$ be a non-empty vertex set, and let $w: V\times V\to \R$ satisfy $w(u,v)=w(v,u) \geq 0$ for all vertices $u$ and $v$.  For simplicity, let us assume that $w(v,v)=0$ for each $v$. We call $w$ a \emph{weight function} on $V^2$. Let $\max(w)$ denote the maximum of all the values~$w(u,v)$.

Define the (weighted) degree of a vertex $u$ by setting $\deg_w(u)=\sum_v w(u,v)$. Similarly, define the (weighted) volume of a vertex set $X$ by $\vol_w (X) = \sum_{u \in X} \deg_w(u)$, and (corresponding to~$e(X)$) let $e_w(X)= \tfrac12 \sum_{u,v \in X} w(u,v)$. 

Assume that $w$ is not identically zero, that is $\vol_w(V)=  2 e_w(V)>0$. For a given partition $\cA$ of $V$, define the \emph{modularity score} of $\cA$ on $w$ by 
\begin{eqnarray*}
q_\cA(w)  & = &
\frac{1}{\vol_w(V)}\sum_{A\in \cA} \sum_{u,v \in A} \left(w(u,v) -\frac{\deg_w(u) \deg_w(v)}{\vol_w(V)}\right)\\
& = & q_{\cA}^E(w) - q_{\cA}^D(w)
\end{eqnarray*}
where
\[ q_{\cA}^E(w) = \frac{1}{\vol_w(V)} \sum_{A \in \cA} 2 e_w(A) \;\;\mbox{ and }\;\; q_{\cA}^D(w) =
\frac{1}{\vol_w(V)^2} \sum_{A \in \cA} \vol_w(A)^2.\]
Define the \emph{modularity} of $w$ by $q^*(w)=\max_{\cA} q_{\cA}(w)$.
As in the unweighted case, $0 \leq \q(w)<1$, and we may ignore vertices with degree 0.  If $w$ is identically 0 we set $q_{\cA}(w)=0$ and $\q(w)=0$. If $w$ is $\{0,1\}$-valued then $q_{\cA}(w)$ and $\q(w)$ are the usual modularity score and modularity value respectively for the graph corresponding to $w$. The values $q_{\cA}(w)$ and $\q(w)$ are unchanged under re-scaling $w$, so we may assume that $0 \leq w(u,v) \leq 1$ for each $u,v \in V$; and in this case we call $w$ a \emph{probability weight function}.

Given a weight function $w$ on $V^2$ and $0 < p \leq 1$ we define a random weight function~$w_p$ by considering each edge $uv$ independently and keeping $w(u,v)$ unchanged with probability $p$, and otherwise setting it to 0.  
Also, given a probability weight function $w$, let $G_{w}$ be the random (unweighted) graph obtained by considering each edge $uv$ independently, and including edge $uv$ with probability $w(u,v)$, and otherwise having no edge $uv$. 
For a special case of $G_w$ see Definition~3.1 in~\cite{koshelev2023modularity}. The model has also been considered in network science applications and $G_w$ is referred to as a probabilistic network (to distinguish it from a binary one - in which all edges have probability either 0 or 1 of appearing)~\cite{kaveh2019comparing,poisot2016structure}.

\needspace{3\baselineskip}
\subsection{Weighted underlying to weighted observed graph}
Given a probability weight function $w$ on $V^2$ and a probability $p$, we defined the random weight function $w_p$ above. 
For such weight functions we have results very like Theorems~\ref{thm.obsmod} and~\ref{thm.moddiff}. Note that the theorems below have conditions on the sum of weights $e_w(V)$ being large - and recall that the modularity score of $w$ is unchanged under re-scaling - thus given a general weight function $w$ we may best apply the theorems to a re-scaling $\tilde{w}$ of $w$ by dividing through by $\max(w)$  (the maximum weight of an edge).

\begin{theorem}\label{thm.obsmodw}
Given $b>0$ and $\eps>0$, there exists $c=c(\eps)$ such that the following holds.
Let $0<p \leq 1$, and let the probability weight function $w$ on $V^2$ satisfy $e_w(V)\,p \geq c$.  Then, with probability $\geq 1-\eps$, the random weight function $w_p$ satisfies $\q(w_p) \geq \q(w) - \eps$.  
\end{theorem}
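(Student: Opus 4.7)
The proof will mimic that of Theorem~\ref{thm.obsmod} line for line, with only the concentration bookkeeping modified to absorb the boundedness parameter $b$. The statement $w(V)p \geq c$ should be read as the natural weighted analogue of $e(G)p \geq c$, i.e.\ as a lower bound on $\vol_w(V)p$ (which is of the same order as $e_w(V)p$).

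First, I would observe that the fattening lemma (Lemma~\ref{lem.nosmall2}) extends verbatim to weighted modularity. Its proof reduces, via Lemma~\ref{lem.2etax}, to the number-partitioning problem applied to the vector $\bigl(\vol_w(B_i)/\vol_w(V)\bigr)_i$; this reduction is insensitive to whether the vertex volumes come from edge counts or from edge weights. Setting $\eta = \eps/4$, this yields an $\eta$-fat partition $\cA = (A_1, \ldots, A_k)$ for $w$ with $q_\cA(w) \geq \q(w) - 2\eta$, number of parts $k \leq 1/\eta$, and (by (\ref{eqn.obs})) $q_\cA^D(w) \geq \eta$.

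Next, I would establish the three concentration estimates corresponding to those in the proof of Theorem~\ref{thm.obsmod}: for $\vol_{w_p}(V)$; for the total internal weight $\sum_A 2 e_{w_p}(A)$; and for each individual $\vol_{w_p}(A_i)$. Each such quantity is a sum of independent non-negative random variables of the form $c_{uv}\, w(u,v)\, {\mathbf 1}_{\{uv \in E(w_p)\}}$ with $c_{uv} \in \{1,2\}$; each summand $X$ is bounded by $2b$ and satisfies $\var(X) \leq 2b\, \E[X]$, so the variance of each sum $S$ is at most $2b$ times its mean. Chebyshev's inequality then gives, for each such $S$ with mean $\mu$,
\[ \pr\bigl(|S-\mu| \geq y\bigr) \leq \frac{2 b \mu}{y^2}. \]
Writing $t := (\vol_w(V)p/b)^{1/2}$, this produces exactly the relative-deviation bounds of the form $x/t$ that drive the proof of Theorem~\ref{thm.obsmod}.

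Finally, combining these estimates exactly as in inequalities (\ref{eqn.qE}) and (\ref{eqn.qD}) and their consequences yields
\[ q_\cA(w_p) \geq q_\cA^E(w)(1-\eta) - q_\cA^D(w)(1+\eta) > q_\cA(w) - 2\eta \geq \q(w) - \eps, \]
with total failure probability at most $(k+3)x^{-2} \leq \eps$ upon taking $x = 3/\eps$ and $t \geq t_0 = \Theta(\eps^{-5/2})$. Hence the theorem holds with $c(b,\eps) = b\, t_0^2 = \Theta(b\, \eps^{-5})$. No genuine obstacle arises here; the only new ingredient is tracking how $b$ enters the variance bounds, where it appears as a single multiplicative factor, so the threshold $c(b,\eps)$ simply inherits a linear dependence on $b$.
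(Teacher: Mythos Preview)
Your proposal is correct and follows essentially the same route as the paper: extend the fattening lemma verbatim to weighted volumes, then rerun the proof of Theorem~\ref{thm.obsmod} with Chebyshev applied to weighted sums, the only change being that the variance picks up a factor of $b$, so one sets $t=(e_w(V)p/b)^{1/2}$ in place of $(e(G)p)^{1/2}$. The paper makes exactly this observation and obtains the same dependence $c(b,\eps)=\Theta(b\,\eps^{-5})$.
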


\needspace{6\baselineskip}
\begin{theorem} \label{thm.moddiffw}
Given $\eps>0$, there exists $c=c(\eps)$ such that the following holds.
Let $0<p \leq 1$, and let the probability weight function $w$ on $V^2$ satisfy $e_w(V)\,p \geq c\, |V|$.  Then, with probability~$\geq 1-\eps$, 
\begin{itemize}
    \item[(a)] the random weight function $w_p$ satisfies $|\q(w_p) - \q(w)| <\eps$; and 
    \item[(b)] given any partition $\cA$ of the vertex set, in a linear number of operations (seeing only~$G_p$) the greedy amalgamating algorithm finds a partition $\cA'$ with \mbox{$q_{\cA'}(w) \geq q_{\cA}(w_p) \!-\!\eps$}.
\end{itemize}
\end{theorem}
Observe that Theorem~\ref{thm.obsmod} is the special case of Theorem~\ref{thm.obsmodw} when $w$ is $\{0,1\}$-valued, and similarly Theorem~\ref{thm.moddiff} is a special case of Theorem~\ref{thm.moddiffw}.
In order to prove Theorems~\ref{thm.obsmodw} and~\ref{thm.moddiffw}, we may use almost the same proofs as before. We need a natural minor variant of Lemma~\ref{lem.nosmall2}.
Given a weight function $w$ on $V^2$, call a set $U$ of vertices $\eta$-{\em fat} if $\vol_w(U) \geq \eta\, \vol_w(V)$, and call a partition $\cA$ of $V$ $\eta$-{\em fat} if each part is $\eta$-{\em fat}.  The following lemma is very similar to Lemma~\ref{lem.nosmall2} and may be proved in exactly the same way.
\begin{lemma}[The fattening lemma for weighted graphs] \label{lem.nosmallw}
  For each non-zero weight function $w$ on $V^2\!$, and each \mbox{$0<\eta \leq 1$,} there is an $\eta$-fat partition $\cA$ of $V$ such that $q_{\cA}(w) > \q(w) - 2\eta$.  
  Indeed, given any partition $\cA_0$ of $V$,  using a linear number of operations, by amalgamating parts we can construct an $\eta$-fat partition $\cA$ such that $q_{\cA}(w) > q_{\cA_0}(w) - 2\eta$.
\end{lemma}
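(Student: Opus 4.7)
The plan is to mimic the proof of Lemma~\ref{lem.nosmall2} line for line, simply replacing $\vol$ and $e$ throughout by their weighted analogues $\vol_w$ and $e_w$. The underlying number-partitioning results (Lemmas~\ref{lem.gamma} and~\ref{lem.2etax}) take as input any tuple of positive reals summing to $1$, so they apply without change. Three small things need checking in the weighted setting: that the greedy number-partitioning algorithm still yields a partition whose amalgamation is $\eta$-fat for $w$; that amalgamating parts cannot decrease $q_\cA^E(w)$; and that amalgamating parts increases $q_\cA^D(w)$ by exactly the cost $c(\cA,\xvec)$ of the underlying number partition.

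In detail, given the input partition $\cA_0=(B_1,\ldots,B_n)$ of $V$, after discarding any parts of zero $w$-volume (they contribute nothing), set $x_i=\vol_w(B_i)/\vol_w(V)$, so $\xvec$ is a positive vector summing to $1$. By Lemma~\ref{lem.2etax}, the number greedy partitioning algorithm uses $O(n)$ operations to produce an $(\xvec,\eta)$-fat partition $\cA=(A_1,\ldots,A_k)$ of $[n]$ with $c(\cA,\xvec)<2\eta$. Form $\tilde{\cA}=(\tilde{A}_1,\ldots,\tilde{A}_k)$, where $\tilde{A}_j=\bigcup_{i\in A_j} B_i$; then $\vol_w(\tilde{A}_j)=S_j\,\vol_w(V)$ with $S_j=\sum_{i\in A_j}x_i\geq\eta$, so $\tilde{\cA}$ is $\eta$-fat for $w$.

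For the modularity bound, non-negativity of $w$ gives $2 e_w(\tilde{A}_j)\geq \sum_{i\in A_j} 2 e_w(B_i)$, so $q_{\tilde{\cA}}^E(w)\geq q_{\cA_0}^E(w)$. A direct calculation yields
\[ q_{\tilde{\cA}}^D(w)-q_{\cA_0}^D(w)=\sum_{j} S_j^2 - \sum_{i} x_i^2 \;=\; c(\cA,\xvec) \;<\; 2\eta.\]
Combining,
\[ q_{\tilde{\cA}}(w) \;\geq\; q_{\cA_0}^E(w)-q_{\cA_0}^D(w)-c(\cA,\xvec)\;>\;q_{\cA_0}(w)-2\eta,\]
which is the "Indeed" part of the statement. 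The first assertion follows on taking $\cA_0$ to be an optimal partition for $w$ (an optimum exists since $V$ is finite so there are only finitely many partitions).

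There is no real obstacle: the argument is a routine reweighting of the unweighted proof. The only step that genuinely uses the weighted structure is the superadditivity $2e_w(A\cup A')\geq 2e_w(A)+2e_w(A')$ for disjoint $A,A'$, which is immediate from $w\geq 0$.
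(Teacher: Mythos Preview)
Your proposal is correct and is exactly what the paper intends: it states that Lemma~\ref{lem.nosmallw} ``is very similar to Lemma~\ref{lem.nosmall2} and may be proved in exactly the same way,'' and your write-up carries out precisely that substitution of $\vol_w$, $e_w$ for $\vol$, $e$ in the proof of Lemma~\ref{lem.nosmall2}. One cosmetic point: after discarding the zero-$w$-volume parts to form $\xvec$, remember to reattach their vertices to some $\tilde A_j$ at the end so that $\tilde\cA$ genuinely partitions $V$; this changes neither the fatness nor the modularity score.
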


\begin{proof}[Proof of Theorem~\ref{thm.obsmodw}.]  The proof is very similar to that of Theorem~\ref{thm.obsmod} so we indicate just a few key steps where there are differences.

As in that proof we may assume that $0< \eps<1$ and $\q(w)\geq \eps$, and we set $\eta=\eps/4$.  By the weighted fattening lemma, Lemma~\ref{lem.nosmallw}, there exists an $\eta$-fat partition $\cA=\{A_1, \ldots, A_k\}$ (where~$k\leq 1/\eta$) such that $q_\cA(w)\geq \q(w)-2\eta\geq 2\eta$. Let $t=(e_w(V)p)^{1/2}$. Then corresponding to~\eqref{eqn.edge}, 
for $0\leq x \leq t$ (and noting that $w(u,v)\leq 1$ for each edge $uv$)
\[\pr\big( \; | e_{w_p}(V)- e_w(V)p| \geq e_w(V)p \cdot x/t\;) \leq 2e^{-x^2/3}. \]
Let $e^{\rm int}_\cA(w)$ denote the sum of `internal' edge weights within the parts of $\cA$. 
Then corresponding to~\eqref{eqn.qE}, for $0< x \leq (3\eta)^{1/2}t$, with probability at least $1-4e^{-x^2/3}$ 
\[
q^E_\cA(w_p) = \frac{e^{\rm int}_\cA(w_p)}{e_{ w_p}(V)} \geq q^E_\cA(w) \frac{ 1-(3\eta)^{-1/2}x/t }{1+x/t}.
\]
For the degree tax, corresponding to~\eqref{eqn.qD} we find the following. For $0 < x \leq (2\eta)^{1/2}t$, with probability at least $1-2(k+1)e^{-x^2/(6b)}$
\[
q^D_\cA( w_p) \leq q^D_\cA(w) \left( \frac{ 1+(2\eta)^{-1/2}x/t }{1-x/t} \right)^2.\] 
Thus for $0<x\leq (2\eta)^{1/2}t$, with probability at least $1-2(k+3)e^{-x^2/6}$  
both the last two displayed inequalities hold. The failure probability is at most $2(4/\eps+3) e^{-x^2/6}$, and we may thus choose $x=x(\eps)$ sufficiently large that the probability is at most $\eps$; and indeed we may take $x=\Theta((\log \eps^{-1})^{1/2})$. 

The rest of the proof follows the non-weighted case by making similar minor adaptations.\end{proof}

\begin{proof}[Proof of Theorem~\ref{thm.moddiffw}] As in the last proof, we may follow the proof of the non-weighted version, Theorem~\ref{thm.moddiff}, with the following adaptations. In place of the fattening lemma, use the weighted version, Lemma~\ref{lem.nosmallw}; replace instances of $G$ and $G_p$ by $w$ and $w_p$ respectively. We may still apply Lemma~\ref{lem.conc} with $0\leq X_j \leq 2$ since all edges have weight at most 1.
\end{proof}

\needspace{3\baselineskip}
\subsection{Weighted underlying to unweighted observed graph}
We will see that the proofs in this subsection follow our proofs of Theorems~\ref{thm.obsmod} and~\ref{thm.moddiff} almost line by line, replacing all instances of $G$ with $w$ and all instances of $G_p$ with $G_w$.

\begin{theorem} \label{thm.obsmod_wb} There exists $c=c(\eps)$ such that the following holds.
Let the probability weight function $w$ on $V^2$ satisfy $e_w(V) \geq c$.  Then with probability at least $1-\eps$ the random graph $G_{w}$ satisfies $\q(G_{w}) > \q(w) - \eps$. 
\end{theorem}

\needspace{3\baselineskip}
\begin{theorem}\label{thm.moddiff_wb}
Given $\eps>0$, there exists $c=c(\eps)$ such that the following holds.
Let the probability weight function $w$ on $V^2$ satisfy $e_w(V) \geq c\, |V|$.  Then with probability at least $1-\eps$, 
\begin{itemize}
    \item[(a)] the random graph $G_{w}$ satisfies $|\q(G_{w}) - \q(w)| <\eps$; and 
    \item[(b)] 
    given any partition $\cA$ of the vertex set, in a linear number of operations (seeing only~$G_p$) the greedy amalgamating algorithm finds a partition $\cA'$ with \mbox{$q_{\cA'}(w) \geq q_{\cA}(G_{w}) \!-\!\eps$}.
\end{itemize}
\end{theorem}

\begin{proof}[Proof of Theorem~\ref{thm.obsmod_wb}] The proof follows that of the non-weighted case, Theorem~\ref{thm.obsmod}, line by line with the following adaptations. In place of the fattening lemma used on the underlying graph, use the weighted version, Lemma~\ref{lem.nosmallw}; and replace instances of $G$ and $G_p$ by $w$ and $G_{w}$ respectively. Note that Lemma~\ref{lem.conc} still applies with $0\leq X_i \leq 2$ - more details are given in the proof of Theorem~\ref{thm.moddiff_wb}. \end{proof}

\begin{proof}[Proof of Theorem~\ref{thm.moddiff_wb}] As in the proof of Theorem~\ref{thm.moddiff}, let $\eps>0$ and let $c> K \eps^{-3}\log \eps^{-1}$ for a sufficiently large constant $K$. We again set $\eta=\eps/9$. Let $w$ be a fixed probability weight function on $V^2$, where $|V|=n$; and assume that $e_w(V)p\geq cn$.

Then the corresponding events $\cB_0, \cB_1, \cB_2$ and $\cE_0$ may all be defined as before, replacing instances of $G$ and $G_p$ with $w$ and $G_{w}$ respectively. Note that in the definition of $\cE_0$, one constructs partition $\cA_0'$ from $\cA_0$ seeing only the observed graph $G_{w}$, and thus using the usual fattening lemma, Lemma~\ref{lem.nosmall2} rather than the weighted one. Then the proof proceeds by noting $\pr(\cB_0)$ is small due to Theorem~\ref{thm.obsmod_wb} and proving the statements corresponding to \eqref{eqn.a} to \eqref{eqn.B2}.

Corresponding to the proof of \eqref{eqn.a} only the usual substitutions of $G$ and $G_p$ by $w$ and $G_{w}$ are required. For the observation after that a small change is needed. Notice that since $\max(w) \leq 1$ we have that $v(G)^2/2 \geq e_w(V)$ and $e_w(V) \geq c v(G)$ by assumption. Thus $v(G)\geq 2c$ as in the earlier proof.

Corresponding to the proof of \eqref{eqn.B1} there are a few minor changes. Let $A\subseteq V$ have $\vol_w(A)<\frac{\eta}{2}\vol_w(V)$. Suppose the (unordered) pairs $u,v$ of vertices in $A$ with $w(u,v)>0$ are labelled $u_1v_1, \ldots, u_jv_j$ (some vertices may be repeated), and the pairs of vertices with exactly one endpoint in $A$ and positive edge weight are labelled $u_{j+1}v_{j+1},\ldots, u_kv_k$. For $i=1, \ldots, j$ let $X_i$ be 2 if edge $u_iv_i$ is in~$G_{w}$ and let $X_i$ be 0 otherwise; and for $i=j+1,\ldots,k$ let $X_i=1$ if edge $u_iv_i$ is in $G_{w}$ and let $X_i$ be 0 otherwise (so the random variable $X_i$ satisfies $0\leq X_i \leq 2$ if $i\leq j$ and $0\leq X_i \leq 1$ if $i>j$, and is non-zero with probability $w(u_i,v_i)$.)
Corresponding to the original proof, $\vol_{G_{w}}(A) =\sum_i X_i$ and $\vol_w(A)=\E[\sum_i X_i]$, and the rest of the proof corresponding to~\eqref{eqn.B1} follows in the same manner -- note that Lemma~\ref{lem.conc} (with~$b=2$) still applies. 

Corresponding to the proof of Claim~\eqref{eqn.eintsmall2}, let $\cB_5$ be the event that, for some partition $\cA \in \cQ$ such that $\eint(w) < \tfrac{\eta}2 e_w(V)$, we have $\eint(G_{w}) \geq \eta e_w(V)$. We note that $\eint(G_{w})$ is stochastically at most a sum of Bernoulli random variables, such that the mean (of the sum) is $\eta e_w(V)/2$ so we may apply Lemma~\ref{lem.conc} (with $b=1$). The rest of the proof corresponding to Claim~\eqref{eqn.eintsmall2} and indeed the entire proof continues with similar adaptations.
\end{proof}

%%%%%%%%%%%%%%%%%%%%%%%%%%%%%%%%%%
%%%%%%%%%%%%%%%%%%%%%%%%%%%%%%%%%
\needspace{3\baselineskip}
\subsection{Application : stochastic block model}

In this subsection we show that Theorem~\ref{thm.SBMk} on the modularity value $\q(G_{n,k,p,q})$ of the stochastic block model follows quickly from Theorem~\ref{thm.moddiff_wb} (a weighted version of Theorem~\ref{thm.moddiff}) and the deterministic result Lemma~\ref{lem.weightedSBMk}.
For $k \geq 2$ and $0 \leq q \leq p$ let
\[ q(k,p,q) = \frac{(p-q) \, (1 - 1/k)}{ p + (k-1)q}.\]
Since rescaling a weight function $w$ does not change $\q(w)$, for simplicity we set $\alpha=1$ in the following lemma. 

\begin{lemma}\label{lem.weightedSBMk}
Let $k \in \NN, k\geq 2$.  For $n \in \NN$ let $V_1 \cup \cdots \cup V_k$ be a partition of $V=[n]$ where
$\lfloor n/k \rfloor \leq |V_i| \leq \lceil n/k \rceil$. Let $\alpha=1$ and $0\leq \beta=\beta(n) \leq \alpha$, and let $w=w(n,k,\alpha, \beta)$  be the weight function on vertex set $V$ with $w_{uv}=\alpha$ if $u$ and $v$ are in the same block $V_i$ and with $w_{uv}=\beta$ otherwise. Then 
$q_{\cA_0}(w) = q(k,\alpha,\beta) +o(1)$ for the planted partition $\cA_0$, and $\q(w) = q(k,\alpha,\beta) +o(1)$.
\end{lemma}

Theorem~4.2 in the recent paper by Koshelev~\cite{koshelev2023modularity} concerns the same weighted graph as above and gives an upper bound on $\q(w)$ without the factor $(1-1/k)$, i.e.\  $\q(w)\leq (\alpha-\beta)/(\alpha+ (k-1)\beta)+o(1)$. The proof in~\cite{koshelev2023modularity} proceeds by calculating the eigenvalues of the weighted adjacency matrix of $w$. The proof below of Lemma~\ref{lem.weightedSBMk} involves a simple weighted notion, $f_w(A)$, of `per-unit-modularity' as used in~\cite{w1hard,modexpansion}. 
\begin{proof}
It will be  straightforward to show that the planted partition $\cA_0$ has modularity score as claimed: the main part of the proof is to show that $\q(w)$ is at most this value.

First we show that we may write the modularity of the weight function $w$ as a weighted sum of a function $f_w(A)$ on vertex sets $A$ - see~\eqref{eq.fwdef}. The proof of the upper bound will proceed by bounding the maximum value of $f_w(A)$ over vertex sets $A$. By definition, for any partition $\cA$ of~$V$
\begin{eqnarray*}
q_\cA(w)  & = & \frac{1}{\vol_w(V)}  \sum_{A \in \cA} \left( 2 e_w(A) - \frac{\vol_w(A)^2}{\vol_w(V)} \right) =  \sum_{A \in \cA} \frac{\vol_w(A)}{\vol_w(V)} f_w(A)
\end{eqnarray*}
where 
\begin{equation}\label{eq.fwdef} f_w(A) = \frac{2 e_w(A)}{\vol_w(A)} - \frac{\vol_w(A)}{\vol_w(V)} .
\end{equation}

Let $\eta>0$ and let $\cA$ be an $\eta$-fat partition of $V$.  By Lemma~\ref{lem.nosmallw} it will suffice for us to show that $q_\cA(w) \leq q(k,\alpha,\beta) +o(1)$; and since $q_\cA(w)$ is a weighted average of the values $f_w(A)$, it will suffice to show that $f_w(A) \leq q(k,\alpha,\beta) +o(1)$ for every $A \subseteq V$ with $\vol_w(A) \geq \eta \, \vol_w(V)$.  Fix such a set $A$.  Note that $\vol_w(V) = \Theta(n^2)$, and so also  $\vol_w(A) = \Theta(n^2)$.

Define $\delta_i$ such that $|A\cap V_i|=\delta_i |V_i|$, and note that $0 \leq \delta_i \leq 1$ with $\sum_i \delta_i > 0$.
Observe that the weighted complete graph on $[n]$ is approximately regular with weighted degree 
\[\deg_w(u)=(\alpha+ (k-1)\beta)\, n/k +O(1)\] for each vertex $u$ (where the $O(1)$ error term is in $(-2\alpha + \beta, \beta) \subseteq (-2,1)$).
 Thus $\vol_w(V)=(\alpha+(k-1)\beta)n^2/k+O(n)$ and $\vol_w(A)=(\alpha+(k-1)\beta)(\sum_i \delta_i) n^{2}/k^2 + O(n)$.
Also
\begin{eqnarray*}
2e_w(A)
& = & \alpha \frac{n^2}{k^2} \big(\sum_i \delta_i^2\big) + \beta \frac{n^2}{k^2} \big( \sum_{i \neq j} \delta_i \delta_j \big) + O(n)\\
& = & 
\frac{n^2}{k^2} \left( (\alpha - \beta) (\sum_i \delta_i^2) + \beta(\sum_i \delta_i)^2 \right) +O(n)
\end{eqnarray*}
since $\sum_{i \neq j} \delta_i\delta_j = (\sum_i \delta_i)^2 - \sum_i \delta_i^2$.
Thus 
\begin{eqnarray*}
    f_w(A) 
    & = & 
\frac{(\alpha-\beta) (\sum_i \delta_i^2) + \beta (\sum_i\delta_i)^2}{(\alpha+(k-1)\beta)(\sum_i \delta_i)}
- \frac{ (\alpha +(k-1)\beta)(\sum_i \delta_i)}{k(\alpha+(k-1)\beta)} + O\Big(1/n\Big)\\
&=&    
\frac{(\alpha-\beta) (\sum_i \delta_i^2) +  (\beta - \frac1{k}(\alpha+(k-1)\beta) (\sum_i \delta_i)^2}{
(\alpha+(k-1)\beta)(\sum_i \delta_i)} + O\Big(1/n\Big)\\
&=& 
\frac{(\alpha-\beta) ((\sum_i \delta_i^2) - \frac1{k}(\sum_i \delta_i)^2)}{ (\alpha + (k-1)\beta) (\sum_i \delta_i)} + O\Big(1/n\Big).
\end{eqnarray*}
Now, for any $s>0$, given that $\sum_i \delta_i=s$ 
\[ f_w(A)= \frac{(\alpha-\beta) \, ((\sum_i \delta_i^2)/s  - s/k)}{ \alpha + (k-1)\beta} + O\Big(1/n\Big).\]
We now show $(\sum_i \delta_i^2)/s  - s/k\leq 1 - 1/k$. If some $\delta_i=1$ and the other $\delta_j$ are zero (in other words, if $A$ is $V_i$) then $s=1$ and $(\sum_i \delta_i^2)/s  - s/k = 1-1/k$.
If $s \leq 1$ then
$(\sum_i \delta_i^2)/s  - s/k \leq s-s/k \leq 1-1/k$.
Also, suppose that $s>1$ say $s=a+x$ where $a \in \NN$ and $0 \leq x<1$.  Then $\sum_i \delta_i^2 \leq a+x^2$, and
$(\sum_i \delta_i^2)/s  - s/k \leq \frac{a+x^2}{a+x} -\frac{s}{k} < 1-1/k$.
Thus
\[ f_w(A) \leq q(k,\alpha,\beta) + O\Big(1/n\Big)\]
(and the upper bound is achieved when $A$ is a block $V_i$).
But, as we noted earlier, $q_\cA(w)$ is a weighted average of the values $f_w(A)$ for $A \in \cA$, and so
\[ q_\cA(w) \leq q(k,\alpha,\beta) + O\Big(1/n\Big).\]
Hence
\[ \q(w) \leq q(k,\alpha,\beta) + o(1),\]
and we have the upper bound claimed.

Now take $\cA$ as the planted partition $\{V_1,\ldots,V_k\}$, and note that  $f_w(V_i)= q(k,\alpha,\beta) + O(1/n)$ for each $i$.  Now since $q_\cA(w)$ is a weighted average of the values $f_w(V_i)$ we see that
\[ q_\cA(w) =  q(k, \alpha,\beta) + O(1/n) , \]
and we are done.
\end{proof}

\begin{proof}[Proof of Theorem~\ref{thm.SBMk}.] 
Let $\alpha=1$ and $\beta=\rho$ (so $0 \leq \beta \leq \alpha$).  Let $\eps>0$. Let $w=w(n,k,\alpha,\beta)$ as in Lemma~\ref{lem.weightedSBMk}, and let $\hat{w}= p\, w$.  Then $G_{n,k,p,q}$ is $G_{\hat{w}}$, and whp $|\q(G_{\hat{w}}) - \q(\hat{w})| \leq \eps$ by Theorem~\ref{thm.moddiff_wb}
since with $V=[n]$ we have
$e_{\hat{w}}(V)/n \geq \frac{np}{2k} +O(1) \to \infty$ as $n \to \infty$. 
But $\q(\hat{w}) = \q(w)$ and $q-\cA(\hat{w}) = q_\cA(w)$ for the planted partition $\cA$, so by Lemma~\ref{lem.weightedSBMk}
\[ q_\cA(G_{n,k,p,q}) = q(k,\alpha,\beta) +o(1)  \;\;\; \mbox{whp}\]
and
\[ \q(G_{n,k,p,q}) = q(k,\alpha,\beta) +o(1)  \;\;\; \mbox{whp},\]
and we are done.
\end{proof}

There is a version of the stochastic block model in which vertices are assigned to blocks independently and uniformly at random.
Consider the variant of Theorem~\ref{thm.SBMk} using this version of the stochastic block model.
Note that the part sizes will whp be $n/k \pm n^{1/2}\log n$. Thus by a coupling argument the edge set will differ by $o(m)$ edges whp from the original model; and by the robustness result, Lemma~\ref{lem.robustness}, we find that whp the modularity values $q_\cA$ and $\q$ are both $q(k,\alpha,\beta) + o(1)$ as before.

\needspace{6\baselineskip}
\section{Concluding remarks}\label{sec.concl}

\noindent\emph{Three phases of modularity}\\
As mentioned in Section~\ref{subsec.relation}, the modularity of the binomial random graph $\Gnp$ exhibits three phases~\cite{ERmod}. We restate the result as a sampling one. Recall that $\q(K_n)=0$~\cite{nphard}. Let $G=K_n$ and consider $n \to \infty$. Then in the sparse case (where $e(G)p\rightarrow \infty$ and $e(G)p/n\leq \frac12 + o(1)$) we have $\q(G_p)$ near~1 whp, in the dense case (where $e(G)p/n \rightarrow \infty$) $\q(G_p)$ is near~$\q(G)=0$ whp, and in between (where $c_1 \leq e(G)p/n \leq c_2$ for constants $\frac12 <c_1<c_2$) $\q(G_p)$ is bounded away from $\q(G)=0$ and $1$ whp. The question below asks if we may extend this three phase behaviour from complete graphs to a larger class of underlying graphs. Let us restrict our attention to graphs without isolated vertices.

\needspace{3\baselineskip}
\begin{question} For which classes $\mathcal{H}$ of graphs $H$ do we have parts (i) and (ii) of the following three phase result (where we write $n$ for $v(H)$)?
\begin{itemize}
\item[(i)] If $e(H)p\rightarrow \infty$ and $e(H)p/n \leq \frac12$ then $\q(H_p)=1+o(1)$ whp.
\item[(ii)]  
There exist $\eps>0$ and $ 0< c_1 < c_2$ such that if $c_1 \leq e(H)p/n \leq c_2$ then $\q(H)+\eps < \q(H_p) < 1-\eps$ whp.
\item[(iii)] If $e(H)p/n \rightarrow \infty$ then $\q(H_p)=\q(H)+o(1)$ whp.
\end{itemize}

\end{question}
Firstly, observe that part (iii) is implied by Theorem~\ref{thm.moddiff}: thus the open question is for which classes of graphs we have parts (i) and (ii). Secondly, we can only get three genuine phases if $e(H)/n$ is unbounded. We have already noted that the three phase result holds for complete graphs, and by double exposure it must hold also for the random graph $G_{n,p}$ where $np \to \infty$ as $n \to \infty$.

Part (i) is false for complete bipartite graphs $K_{k,n}$ for any fixed $k$, since $\q(G) \leq 1-1/k$ for any subgraph $G$ of $K_{k,n}$ (since $G$ has at most $k$ components, ignoring isolated vertices).  Similarly, if $t$ and $k_1,\ldots,k_t$ are fixed and $H$ has components $K_{k_1,n_1},\ldots, K_{k_t,n_t}$ then part (i) is false. Also, for (ii) to hold, we clearly must have $\q(H)$ bounded away from~1.

\medskip

\needspace{3\baselineskip}
\noindent\emph{Random false positives}\\
In our model, the observed graph $G_p$ has random false negatives, that is, there are are edges in $G$ which are non-edges in $G_p$, but no false positives. Theorem~\ref{thm.moddiff} shows that the modularity value is robust to random false negatives: so long as the observed graph has high enough expected degree the modularity value of the observed graph is close to the modularity of the underlying graph. (For an underlying graph with $\Theta(n^2)$ edges we may have a $1-\Theta(1/n)$ chance of not seeing each edge.) However, perhaps random false positives may cause a large increase or decrease to the modularity value, possibly as much as adversarially added false positives.

Suppose that we are given all the edges in $G$, but we may falsely think that some  non-edges -- the false positives -- are also edges of $G$. (We have not allowed false positives until now.) What can we say about modularity?
Let us consider graphs $G_n$ with $n$ vertices and $m=m(n)$ edges which we see fully and correctly; and suppose that there are $\tilde{m} = \tilde{m}(n)$ false positives randomly chosen from the non-edges of the graph, so the observed graph~$G'_n$ has $m+\tilde{m}$ edges.

If $\tilde{m}$ is much smaller than $m$ then (unsurprisingly) we have no difficulties: by Lemma~\ref{lem.robustness}, if $\tilde{m} \leq (\eps/2) m$ then $|\q(G_n)-\q(G'_n)| \leq \eps$ so $\q(G'_n)$ is a good estimate of $\q(G_n)$.  At the other extreme, if $\tilde{m}$ is much larger than $m$ then there is little point in using $\q(G'_n)$ as an estimate for~$\q(G_n)$.
For by Lemma~\ref{lem.robustness}, if $\tilde{m} \geq (2/\eps) m$ and we denote by $\tilde{G}_n$ the graph formed just from the false positive edges, then $| \q(G'_n) - \q(\tilde{G}_n)| \leq \eps$; so that $\q(G'_n)$ is essentially determined by~$\q(\tilde{G}_n)$, whatever the value of $\q(G_n)$.

The interest is thus in the balanced case, when $\tilde{m} \sim \delta m$ for some constant $\delta$. We will see in Example~\ref{ex.add_random_edges_to_increase_q} that sometimes randomly adding false positives may \emph{increase} the modularity nearly as much as adversarially choosing edges to add. Part (a) of the robustness lemma, Lemma~\ref{lem.robustness}, gives the following `adversarial' bound. If the graph $G$ has $m$ edges, and $G'$ is any graph obtained from $G$ by adding $\delta m$ edges, then for any vertex partition~$\cA$,
\begin{equation}\label{eq.add_edges_adversarial}
|\q(G)-\q(G')|,\, |q_\cA(G)-q_\cA(G')| \;\leq \frac{2\delta}{1+\delta}.
\end{equation}
\needspace{3\baselineskip}
\begin{example}\label{ex.add_random_edges_to_increase_q}
Let $k=k(n) \in \NN$ and suppose that $2 \leq k \ll n^{1/2}$.  Let~$G_n$ consist of a star on $[k]$ together with $n-k$ isolated vertices; and note that $m=e(G_n)=k-1$ and $\q(G_n)=0$. Form~$G'_n$ by adding~$\delta m$ new edges uniformly at random.  Then the difference in modularity values is approximately $2\delta/(1+\delta)$, matching the adversarial bound~\eqref{eq.add_edges_adversarial}.

To see this we may check that whp $G'_n$ consists of the star together with $\delta m$ isolated edges (and $n-k-2 \delta m$ isolated vertices - which we may ignore). But for such a graph $H$, an optimal partition~$\cA$ consists of one part $[k]$ and $\delta m$ parts of size 2 corresponding to the isolated edges - since the parts in an optimal partition must induce connected subgraphs, and connected components which themselves have modularity zero must not be split in an optimal partition~\cite{modexpansion}. (Note that the modularity values of a star and of a single edge are zero~\cite{nphard}). Now, this partition captures all edges and so \[q_{\cA}(H) = 1 - \frac{(2m)^2 + \delta m \cdot 2^2}{(2m(1+\delta))^2}= \frac{2\delta+\delta^2}{(1+\delta)^2} - \frac{\delta}{(1+\delta)^2m} = \frac{2\delta}{1+\delta} - \frac{\delta^2}{(1+\delta)^2} + O(\frac{1}{m}).
    \]  
Hence whp 
\[ \q(G'_n)-\q(G_n) = \q(G'_n) =\frac{2\delta}{1+\delta} - \frac{\delta^2}{(1+\delta)^2} + O(\frac{1}{m})\,,\]
which is close to the adversarial (worst case) bound in~\eqref{eq.add_edges_adversarial} when $\delta$ is small.
\end{example}
However, how much we may \emph{decrease} the modularity by adding random false positives is open. Indeed, it is open how much we may decrease the modularity by adversarially adding edges: note that  Example~\ref{ex.robustness} which showed tightness of Lemma~\ref{lem.robustness} only showed tightness for how much the modularity may \emph{increase} by adding edges.
\begin{question}\label{q.add_edges_decrease_mod}
Let $G$ be a graph with $m$ edges, let $G'$ be obtained from $G$ by randomly adding $\delta m$ edges, and let $G''$ be obtained from $G$ by adding a set of $\delta m$ edges which maximises $\q(G)-\q(G'')$. What upper bounds do we have for $\q(G) - \q(G')$ or for $\q(G)-\q(G'')$? Can we beat the upper bound $2\delta/(1+\delta)$ significantly?
\end{question}

\bibliography{articles}
\appendix

\newpage
\section{Simulations for larger underlying graph}\label{sec.simsBIG}
\begin{figure}[h!]
\begin{picture}(170,550) 
	\put(20,289){ 
		\includegraphics[width=0.8\textwidth]{./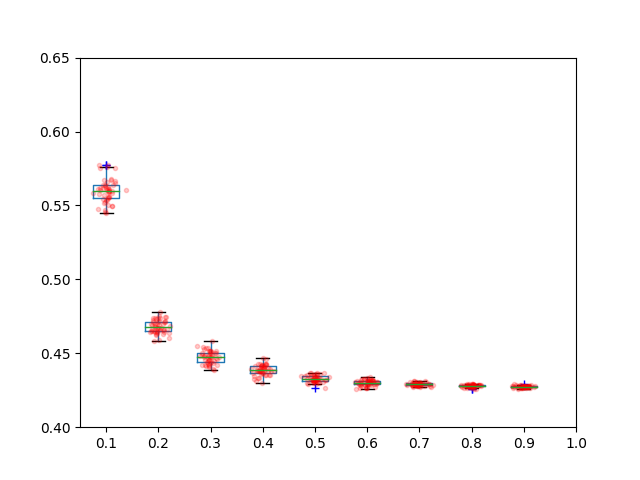}
		\put(-380,135){\rotatebox{90}{$\tilde{q}(G_p)$}} 
	    \put(-300,259){Estimated modularity of sampled graph} 
	    \put(-200,7){$p$}
		}
	\put(20,-6){
		\includegraphics[width=0.8\textwidth]{./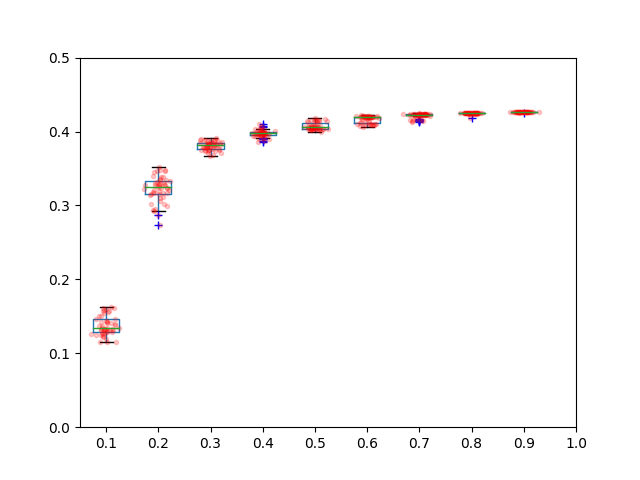}
		\put(-380,135){\rotatebox{90}{$q_{\tilde{\cA}(G_p)}(G)$}} 
	    \put(-300,273){Modularity score of underlying graph using}
	    \put(-300,259){partition estimated from sampled graph}
	    \put(-200,7){$p$}
		}
\end{picture}
\caption{
Simulation results. 
The US political blogs network~\cite{adamic2005political} with 1490~vertices and 16718~edges was taken to be the underlying graph $G$, with the directed links between blogs encoded as  undirected edges. The simulations were run as described in Figure~\ref{fig.sims} on page~\pageref{fig.sims} with the exception that in the lower plot we plot the modularity score of~$\tilde{\cA}(G_p)$, rather than the score of the $\eta$-fattened partition~$\tilde{\cA}'(G_p)$. \vspace{-10mm}
} \label{fig.simsBIG}
\end{figure}

\end{document}